\documentclass[11pt]{amsart}
\usepackage{geometry}
\usepackage{microtype}
\numberwithin{equation}{section}
\usepackage{amssymb}
\usepackage{xcolor}
\usepackage[T1,T5]{fontenc}
\usepackage{hyperref}
\hypersetup{   
colorlinks=true,       
linkcolor=black,          
citecolor=black,        
filecolor=black,      
urlcolor=black}     

\newtheorem{theorem}{Theorem}[section]
\newtheorem{lemma}[theorem]{Lemma}

\newtheorem{prop}[theorem]{Proposition}
\newtheorem{proposition}[theorem]{Proposition}

\newtheorem{coro}[theorem]{Corollary}

\theoremstyle{definition}
\newtheorem{defi}[theorem]{Definition}

\newtheorem{example}[theorem]{Example}
\newtheorem{remark}[theorem]{Remark}
 \newtheorem*{ackn}{Acknowledgements}

 \newcommand{\R}{\mathbb R}

 \newcommand{\N}{\mathbb N}

\begin{document}

\title[Complex Monge-Amp\`ere equations on compact Hermitian manifolds]{Complex Monge-Amp\`ere equations on Hermitian manifolds:  from bounded to smooth solutions}

  \author{Papa Badiane}
\address{Papa Badiane: Classes Préparatoires aux Grandes \'Ecoles (CPGE) de Thiès, BP 36005, Dakar (Sénégal)\and
	Laboratoire de Math\'ematiques et Applications\\ UFR des Sciences et Technologies\\
Université Assane Seck de Ziguinchor, BP 523 (S\'en\'egal) }

\email{\href{mailto:p.badiane4963@zig.univ.sn }{\href{mailto:p.badiane4963@zig.univ.sn}{p.badiane4963@zig.univ.sn}}}

\author{Chinh H. Lu}
\address{Chinh H. Lu: Institut Universitaire de France \& Univ Angers, CNRS, LAREMA, SFR MATHSTIC, F-49000 Angers, France}
\email{\href{hoangchinh.lu@univ-angers.fr}{hoangchinh.lu@univ-angers.fr}}

\author{Ahmed Zeriahi}
\address{Ahmed Zeriahi: Institut de Math\'ematiques de Toulouse   \\ Universit\'e de Toulouse \\
118 route de Narbonne \\
31400 Toulouse, France\\ }

\email{\href{mailto:ahmed.zeriahi@math.univ-toulouse.fr}{ahmed.zeriahi@math.univ-toulouse.fr}}

 \begin{abstract}
 	We study regularity of bounded weak solutions to complex Monge-Amp\`ere equations on compact Hermitian manifolds with right-hand side possibly decreasing in the unknown. Our proof relies on the domination principle and a priori estimates, partially motivated by the Monge-Amp\`ere eigenvalue problem. Our main result roughly says that any bounded solution is smooth in the regular locus of the data.  When the right-hand side is strictly positive and smooth, we recover known results by Nie, Ko{\l}odziej-Nguyen for the Hermitian case and Sz\'ekelyhidi-Tosatti for the K\"ahler case, which rely on the regularizing property of the K\"ahler-Ricci flow. 
 \end{abstract}

\date{\today}
 
 \maketitle
 
 \tableofcontents

\section{Introduction}

Since the breakthrough of Yau \cite{Yau78} solving the Calabi conjecture, complex Monge-Amp\`ere equations have occupied a central place in complex geometry and analysis of PDEs.  Apart from the continuity method employed by Yau, several other approaches (viscosity, variational, Perron envelope, etc) for solving the equations have been developed in recent years. Solutions constructed from these methods have a priori low regularities and a natural question is whether they are as regular as the data. This is the main question we would like to study in this article.    

Let $X$ be a compact Hermitian manifold of dimension $n$ and fix a Hermitian metric $\omega_X$ on $X$. Let $\theta$ be a smooth semipositive $(1,1)$-form that is big. By big, we mean there exists a function $\rho$ with analytic singularities along an analytic subset $D$ such that $\theta +dd^c \rho \geq 2\varepsilon_0 \omega_X$, for some $\varepsilon_0>0$ that will be fixed.

We consider the following complex Monge-Amp\`ere equation 
\begin{equation}\tag{CMAE}
	\label{eq: CMAeq}
	(\theta+dd^c u)^n = e^{f(\cdot,u) + \psi^+-\psi^{-}} \omega_X^n,
\end{equation}
where $f: X \times \mathbb R \rightarrow \mathbb R$ is a smooth function, and $\psi^{\pm}$ are quasi-psh functions on $X$ which are smooth in a fixed  open set $\Omega\subset X$ (not necessarily Zariski dense in $X$). The existence of a bounded $\theta$-psh function solving \eqref{eq: CMAeq} is not guaranteed, but our main result shows that, if it exists, it is smooth in the regular locus of the data.

\begin{theorem}\label{thm: bdd implies smooth intro}
	If $\varphi$ is a bounded $\theta$-psh function solving \eqref{eq: CMAeq}, then $\varphi$ is smooth in $\Omega\setminus D$.  
\end{theorem}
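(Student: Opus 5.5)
The plan is to approximate the equation by smooth ones and obtain uniform a priori estimates on compact subsets of $\Omega\setminus D$. Uniqueness of the bounded solution is then used to identify the limit of the approximants with $\varphi$.

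\emph{Approximation.} Regularize $\psi^\pm$ by smooth quasi-psh functions $\psi_j^\pm \searrow \psi^\pm$, e.g.\ via Demailly's regularization. On $\Omega$ this convergence is locally smooth, since $\psi^\pm$ are already smooth there. Replace $\theta$ by the Hermitian forms $\theta_j=\theta+2^{-j}\omega_X$. The nonlinearity $f(\cdot,u)$ need not be monotone. I would therefore freeze it at the given solution: set $g=f(\cdot,\varphi)$, which is bounded, and approximate it by smooth functions $g_j$ converging in $L^p$. Now solve
\[
(\theta_j+dd^c\varphi_j)^n = c_j e^{g_j+\psi_j^+-\psi_j^-}\omega_X^n
\]
by the Tosatti--Weinkove/Kołodziej--Nguyen theory on Hermitian manifolds, obtaining smooth $\varphi_j$ with constants $c_j>0$. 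Kołodziej--Nguyen stability, together with the domination principle assumed available, gives uniform bounds on $\varphi_j$ and on $c_j$ from above and below. This uses that $e^{-\psi^-}$ lies in $L^p$, which holds when $\psi^-$ has small Lelong numbers; otherwise I would instead argue with the density of $(\theta+dd^c\varphi)^n$, which is known to be in $L^1$ with bounded potential. Uniqueness then yields $\varphi_j\to\varphi$ in capacity and $c_j\to 1$.

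\emph{Local estimates.} Fix $K\Subset \Omega\setminus D$. I would derive a Laplacian estimate of Păun/Tsuji type with weight $\rho$: bound $e^{-A(\varphi_j-\varepsilon\rho)}\,{\rm tr}_{\omega_X}(\theta_j+dd^c\varphi_j)$ by a maximum principle argument. This is the Hermitian version of the Guan--Li/Tosatti--Weinkove computation, and the maximum sits away from $D$ because $\rho\to-\infty$ there. The derivation needs control of $\Delta(\psi^+_j-\psi^-_j)$ from below. Since $\psi^\pm_j$ are only quasi-psh globally, I would use $dd^c\psi_j^+\ge -C\omega_X$ and place $\psi^-$ into the barrier: apply the maximum principle to $\log{\rm tr} - A(\varphi_j-\varepsilon\rho) + \psi_j^-$ or a similar expression, using $dd^c\psi^-_j\ge -C\omega_X$. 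Provided the frozen term $g_j$ remains smooth on $\Omega$, the result is $\Delta\varphi_j\le C_K$ on $K$.

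\emph{Main obstacle.} The difficulty is that freezing destroys smoothness of $g_j$ on $\Omega$, so the $C^2$ bound cannot simply be taken in that form. I would first prove the argument for $u$-independent right-hand sides. Then I would bootstrap: the Laplacian bound on $\varphi$ makes $g$ Lipschitz locally, and we rerun with $f(\cdot,\varphi_j)$ kept unfrozen. Uniform ellipticity then follows from the lower bound on the density. After that, Evans--Krylov (or the complex version of Tosatti--Wang--Weinkove--Yang) gives $C^{2,\alpha}$, and Schauder bootstrapping gives smoothness in $\Omega\setminus D$.

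Alternatively, and more robustly, I would deal with the $u$-dependence by solving the approximating equations with $f$ kept and the term $-Mu$ added on both sides, so that the right-hand side becomes increasing in $u$. Comparison then gives uniqueness and convergence to $\varphi$, using the domination principle.
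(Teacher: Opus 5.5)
There is a genuine gap, and it sits exactly at the step you flag as the main obstacle. Your weighted Laplacian estimate (weight in $\rho$, with $\psi^-$ placed in the barrier) has the right shape. The trouble is the frozen term. A maximum-principle estimate for $(\theta_j+dd^c\varphi_j)^n=c_je^{g_j+\psi_j^+-\psi_j^-}\omega_X^n$ needs a lower bound on $dd^cg_j$. Since $g_j$ is just some smoothing of $f(\cdot,\varphi)$ with $\varphi$ merely bounded, $\|g_j\|_{C^2(K)}$ is not controlled.

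The bootstrap presupposes what it is meant to produce. The first Laplacian bound on $\varphi$ would have to come from the $u$-independent estimate with frozen data $f(\cdot,\varphi)$, and that estimate needs $\varphi$ to be already $C^{1,1}$-regular. Rerunning with $f(\cdot,\varphi_j)$ unfrozen does not help either. For non-monotone $f$ there is no general existence theory for those equations, and even when they are solvable, nothing forces their solutions to converge to the given $\varphi$, since solutions are not unique.

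Your ``more robust'' alternative addresses monotonicity. It does not say what is frozen, nor why the $C^2$ estimate would be independent of the derivatives of the frozen function, and that is the heart of the matter. Separately, identifying the limit of $u$-independent approximants with $\varphi$ would need uniqueness up to constants of bounded solutions of $(\theta+dd^cu)^n=c\,\mu$. The domination principle does not give this; it only pins down $c$.

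The missing idea is to freeze $f$ at the Demailly approximants $v_j\searrow\varphi$ themselves, not at a smoothing of $f(\cdot,\varphi)$. These are smooth, $\theta_j$-psh, with $|v_j|\le C_0$. One also keeps a strictly increasing term, i.e. solves
\[
(\theta_j+dd^cu_j)^n=e^{u_j+g(\cdot,v_j)+\psi_j^+-\psi_j^-}\omega_X^n,\qquad g(x,t):=f(x,t)-t.
\]
Set $\tilde\omega:=\theta_j+dd^cu_j$ and $\omega_{v_j}:=\theta_j+dd^cv_j\ge0$, and use the test function $\log{\rm Tr}_{\omega_X}(\tilde\omega)+Av_j^2-\gamma(u_j-\rho-\varepsilon_0\psi_j^-+C)$, after shifting so that $v_j\ge1$.

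\begin{itemize}
\item The term $dd^cg(\cdot,v_j)$ costs at most $C\bigl(1+|\partial v_j|^2_{\omega_X}+{\rm Tr}_{\omega_X}(\omega_{v_j})\bigr)/{\rm Tr}_{\omega_X}(\tilde\omega)$, with $C$ depending only on $\|g\|_{C^2(X\times[-C_0,C_0])}$. The one-sided bound on $\partial_tg\,dd^cv_j$ uses $\omega_{v_j}\ge0$.
\item This cost is absorbed by $2A\,{\rm Tr}_{\tilde\omega}(\omega_{v_j})+2A\,{\rm Tr}_{\tilde\omega}(dv_j\wedge d^cv_j)$, which comes from $A\Delta_{\tilde\omega}v_j^2$, thanks to ${\rm Tr}_\alpha(\beta)\,{\rm Tr}_\beta(\eta)\ge{\rm Tr}_\alpha(\eta)$.
\item Hence $\Delta_{\omega_X}u_j\le e^{-C(\psi^-+\rho)}$ uniformly in $j$, with no control on the derivatives of $v_j$ needed.
\end{itemize}

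The factor $e^{u_j}$ makes each equation uniquely solvable. Moreover, $\varphi$ solves the strictly increasing equation $(\theta+dd^cw)^n=e^{w-\varphi+f(\cdot,\varphi)+\psi^+-\psi^-}\omega_X^n$. So the domination principle identifies $\lim u_j$ with $\varphi$, and the estimate passes to $\varphi$.

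Finally, only $e^{\psi^+-\psi^-}\in L^1$ is guaranteed, so uniform $L^\infty$ bounds via Ko{\l}odziej--Nguyen stability are not available in general. The paper instead proves $u_j\ge\varepsilon\psi_j^--C(\varepsilon)$, using the envelopes $P_{\theta_j}(bu_j-(b-1)\psi_j^-)$, whose Monge--Amp\`ere densities are bounded in $L^b$. It then compares limits via the pluripotential minimum and maximum principles rather than via convergence in capacity.
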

The set $\Omega$ is not necessarily open in the Zariski topology, and it can merely be a small open set in $X$. The existence of a bounded solution implicitly implies that the function $e^{\psi^+-\psi^-}$ is integrable over $X$. In case $f$ is non-decreasing in $t$, the latter integrability condition is also sufficient to ensure the solvability of \eqref{eq: CMAeq}, and as shown in \cite{DnL17} in the K\"ahler case with $\Omega$ being a Zariski open set, the unique solution is smooth in $\Omega\setminus D$. 

In a particular case when $\psi^{\pm}=0$ and $\theta$ is K\"ahler, the result was known by Sz\'ekelyhidi and Tosatti \cite{ST11} who proved that $\varphi$ is smooth by using the smoothing property of the weak K\"ahler-Ricci flow \cite{ST17},  \cite{Tsu88}, \cite{TZ06}. Nie \cite{Nie14} used the idea of \cite{ST11} to study the same problem on compact Hermitian manifolds, and obtained the same result under a condition on the preservation of mass which, as shown by Chiose \cite{Chio24}, turns out to be equivalent to the curvature assumption introduced by Guan and Li \cite{GL10}.  What was actually used in \cite{Nie14} is a stability result for solutions to complex Monge-Amp\`ere equations. This deep result was finally obtained by Ko{\l}odziej and Nguyen \cite{KN19}, hence the smoothness of bounded solutions is also true in the Hermitian case.

When $f$ is non-decreasing in $t$ and $\psi^{\pm}=0$, $\theta>0$, the existence and uniqueness of a smooth solution were proved by Tosatti and Weinkove \cite{TW10} building on earlier contributions by Cherrier \cite{Cher87}, Hanani \cite{Han96}, Guan and Li \cite{GL10}, among others. For more general right-hand sides, weak solutions have been investigated by various authors; see, for instance, \cite{DK12ALM}, \cite{KN19}, \cite{Ng16AIM}, \cite{KN15Phong}, and \cite{GL23Crelle}. 

When $f$ is not increasing in $t$, the situation becomes more delicate: solutions, when they exist, are generally not unique, except in certain special cases (see \cite{LP25}). 

 Our proof is different from that of \cite{ST11}, it is partially inspired by the a priori estimates appearing in the study of the Monge-Amp\`ere eigenvalue problem in \cite{BZ23}.  Let us explain the main idea in the simplest case, when $\theta=\omega$ is K\"ahler and $\psi^{\pm}=0$, so that $\Omega = X$ and $D = \emptyset$. We approximate the bounded solution $\varphi$ by a decreasing sequence $(v_j)$ of smooth $\omega$-psh functions via Demailly's approximation theorem \cite{Dem94}. For each $j$, we solve 
\[
(\omega+dd^c u_j)^n = e^{u_j-v_j + f(\cdot,v_j)} \omega_X^n.  
\]
We next establish uniform estimates for $u_j$ allowing to extract a subsequence which uniformly converges to a smooth function $u$ satisfying  
 \[
(\omega+dd^c u)^n = e^{u-\varphi + f(\cdot,\varphi)} \omega_X^n.  
\]
The domination principle then ensures that $u=\varphi$. Given this scheme, the main task is to establish uniform estimates independent of $v_j$. This approach is strongly inspired by the recent work of the first and last authors on the eigenvalue problem \cite{BZ23}.  

In the case $\psi^{\pm}=0$, our approach yields a simpler alternative proof to those of \cite{ST11} and \cite{Nie14}. Furthermore, our assumptions on the data are sufficiently flexible to extend the result to singular varieties, providing analogous regularity for the solutions.  
\begin{coro}
	Let $V$ be a compact complex variety with klt singularities equipped with an Hermitian form $\omega_V$, an adapted measure $\mu_V$, and let $f : V\times \mathbb R \rightarrow \mathbb R$ be a smooth function. Then any bounded solution to the Monge-Amp\`ere equation 
	\[
	(\omega_V+dd^c u)^n = e^{f(\cdot,u)}\mu_V, \; \omega_V+dd^c u\geq 0,
	\]
	is smooth in the regular locus $V_{{\rm reg}}$. 
\end{coro}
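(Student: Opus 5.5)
The plan is to reduce the corollary to Theorem \ref{thm: bdd implies smooth intro} by passing to a log resolution of $V$. Let $\pi: X \to V$ be a log resolution of singularities, chosen as an isomorphism over $V_{\rm reg}$, with $X$ compact. If $V$ is not K\"ahler, we only use that $X$ is a compact complex manifold carrying some Hermitian metric $\omega_X$. Set $\theta := \pi^*\omega_V$. This is a smooth semipositive $(1,1)$-form on $X$, and it is strictly positive on $\pi^{-1}(V_{\rm reg})$.

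\textbf{Step 1: $\theta$ is big in the sense of the paper.} Since $\pi$ is a composition of blow-ups with smooth centers, there is an effective divisor $E$ supported on the exceptional locus and a smooth Hermitian metric $h$ on $\mathcal O(E)$ such that $\pi^*\omega_V - \delta\,\Theta_h(E)$ is strictly positive on $X$ for small $\delta>0$; this is the standard construction. Taking $\rho = \delta\log|s_E|_h^2$ gives $\theta + dd^c\rho \geq 2\varepsilon_0\omega_X$ off $E$, with analytic singularities along $D := {\rm Exc}(\pi)$.

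\textbf{Step 2: rewrite the equation on $X$.} Put $\varphi := u\circ\pi$, which is a bounded $\theta$-psh function. The klt condition together with the definition of an adapted measure gives
\[
\pi^*\mu_V = e^{\psi^+ - \psi^-}\,\omega_X^n,
\]
where $\psi^{\pm}$ are quasi-psh functions of the form $\sum a_i^{\pm}\log|s_i|^2 + \text{smooth}$. Here the $E_i$ are the exceptional divisors (together with any other divisors from the discrepancy), $a_i^{\pm}\ge 0$, and the discrepancies satisfy $a_i>-1$. Concretely, the positive and negative parts of the discrepancy divisor are split into $\psi^+$ and $\psi^-$. The klt condition is exactly what makes $e^{\psi^+-\psi^-}$ integrable. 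Both $\psi^\pm$ are smooth on $\Omega := X\setminus{\rm Exc}(\pi) = \pi^{-1}(V_{\rm reg})$. Since $\pi$ is an isomorphism over $V_{\rm reg}$, the Monge--Amp\`ere measure of $\varphi$ there is the pullback of that of $u$. Bounded $\theta$-psh functions put no mass on the pluripolar set ${\rm Exc}(\pi)$, and neither does $\pi^*\mu_V$. Hence $\varphi$ solves \eqref{eq: CMAeq} on all of $X$ with $f\circ(\pi\times{\rm id})$ in place of $f$. One should check that the composed $f$ remains smooth on $X\times\mathbb R$, which is immediate.

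\textbf{Step 3: conclude.} Theorem \ref{thm: bdd implies smooth intro} gives that $\varphi$ is smooth on $\Omega\setminus D = \pi^{-1}(V_{\rm reg})$. Transporting back through the isomorphism $\pi$ shows that $u$ is smooth on $V_{\rm reg}$.

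The main obstacle I expect is Step 2: justifying the local description of the adapted measure $\mu_V$ on the resolution as $e^{\psi^+-\psi^-}\omega_X^n$ with $\psi^{\pm}$ quasi-psh and smooth off the exceptional set. I would handle this by working locally, using the definition of adapted measures via local generators of $mK_V$ and the formula $K_X = \pi^*K_V + \sum a_iE_i$. A secondary point is that ${\rm Exc}(\pi)$ carries no Monge--Amp\`ere mass, which follows from boundedness of $\varphi$.
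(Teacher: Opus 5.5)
Your proposal is correct and follows the paper's own argument. You pass to a resolution $\pi: X\to V$ and observe that $u\circ\pi$ is a bounded $\pi^*\omega_V$-psh solution of \eqref{eq: CMAeq}, where $\psi^{\pm}$ come from the positive and negative parts of the discrepancy divisor (integrable by the klt condition, smooth off ${\rm Exc}(\pi)$), and then you apply Theorem \ref{thm: bdd implies smooth intro}; your checks of bigness of $\pi^*\omega_V$ via a relatively anti-ample exceptional divisor, and that ${\rm Exc}(\pi)$ carries no Monge--Amp\`ere mass, simply make explicit what the paper's one-line proof leaves implicit.
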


For more details on singular varieties and adapted measures, we refer the reader to \cite{EGZ09}, \cite{GL23Crelle}. The above corollary is a direct consequence of Theorem \ref{thm: bdd implies smooth intro} since working on a desingularization $\pi: X \rightarrow V$, the function $u\circ \pi$ is bounded, $\pi^*\omega_V$-psh and it solves \eqref{eq: CMAeq} for appropriate $\psi^{\pm}$.

In the case when $V$ is a $\mathbb Q$-Fano compact K\"ahler variety with klt singularities, our result is applicable showing that any singular K\"ahler-Einstein metric is smooth in the regular locus of $V$. It was announced  in \cite{GKSS25} (see the text below Definition 2 in \cite{GKSS25}) that the above Corollary can be proved by using the smoothing property of the K\"ahler-Ricci flow as in \cite{ST11}. 
	
	\medskip
	
	When the right-hand side in \eqref{eq: CMAeq} is not increasing in $u$, existence of solutions is not guaranteed. Guan-Li \cite{GL10} proved that on compact Hermitian manifolds with boundary the existence of smooth subsolutions ensures the solvability of the equation. On compact manifolds without boundary, the situation is different, and the existence of a subsolution is not enough in general, but we have the following. 
\begin{theorem} \label{thm:SubSolThm}
Assume that the equation \eqref{eq: CMAeq} admits a bounded subsolution $\underline u \in {\rm PSH}(X,\theta)$ and a bounded supersolution $\bar u  \in {\rm PSH}(X,\theta)$ such that $\underline{u}\leq \bar u$. Then it admits a bounded solution $u \in {\rm PSH}(X,\theta)$ which is smooth in $\Omega$ and satisfies $\underline u \leq u \leq \bar u$ on $X$.
\end{theorem}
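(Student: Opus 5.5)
The plan is to construct a solution by a monotone (Perron-type) iteration between $\underline u$ and $\bar u$, and then to invoke Theorem \ref{thm: bdd implies smooth intro} for the regularity. Fix a constant $A>0$ so large that $t\mapsto At - f(x,t)$ is non-decreasing on the interval $[\inf_X \underline u, \sup_X \bar u]$. Writing $F(x,t)=f(x,t)-At$, we have $F(\cdot,t)+As = f(\cdot,t)+A(s-t)$, so the original equation reads $(\theta+dd^c u)^n = e^{Au + F(\cdot,u)+\psi^+-\psi^-}\omega_X^n$. Set $u_0=\underline u$. Given a bounded $\theta$-psh $u_k$ with $\underline u\le u_k\le \bar u$, let $u_{k+1}$ be the bounded $\theta$-psh solution of
\[
(\theta+dd^c u_{k+1})^n = e^{Au_{k+1} + F(\cdot,u_k)+\psi^+-\psi^-}\omega_X^n .
\]
The right-hand side is increasing in the unknown, and its density $e^{F(\cdot,u_k)+\psi^+-\psi^-}$ is comparable to $e^{\psi^+-\psi^-}$ because $u_k$ is bounded. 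This density is integrable: the subsolution condition gives $e^{f(\cdot,\underline u)+\psi^+-\psi^-}\omega_X^n\le(\theta+dd^c\underline u)^n$, which has finite mass. Some $L^p$, $p>1$, integrability is also needed; I would get it from the fact that $\psi^-$ is quasi-psh, or, failing that, take it as the implicit hypothesis of the setting. Granting this, the existence and uniqueness of a bounded solution for increasing right-hand sides on Hermitian manifolds with big semipositive $\theta$ is available from the cited works (\cite{KN19}, \cite{GL23Crelle}).

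The next step is to show $\underline u\le u_1\le u_2\le\dots\le\bar u$ by induction, using the domination principle (comparison for equations with right-hand side $e^{Au}\mu$). For $k=0$: $\underline u$ is a subsolution of the $u_1$-equation, since
\[
(\theta+dd^c \underline u)^n\ge e^{f(\cdot,\underline u)+\dots} = e^{A\underline u+F(\cdot,\underline u)+\dots}.
\]
Comparison then gives $\underline u\le u_1$. For the upper bound, $\bar u$ satisfies $(\theta+dd^c\bar u)^n\le e^{A\bar u+F(\cdot,\bar u)+\dots}\le e^{A\bar u+F(\cdot,u_k)+\dots}$, because $F$ is non-increasing in $t$ and $u_k\le \bar u$. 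Hence $\bar u$ is a supersolution and $u_{k+1}\le\bar u$. Monotonicity follows the same way: if $u_{k-1}\le u_k$ then $F(\cdot,u_k)\le F(\cdot,u_{k-1})$, so $u_k$ is a subsolution of the $u_{k+1}$-equation, giving $u_k\le u_{k+1}$. This comparison step needs care, because the sub/supersolutions are only bounded and not continuous, and because $\theta$ is merely big. I would use the form of the domination principle valid for bounded $\theta$-psh functions, in which the inequality $(\theta+dd^c u)^n\ge e^{Au}\mu$, $(\theta+dd^c v)^n\le e^{Av}\mu$ implies $u\le v$ whenever $\mu$ does not charge pluripolar sets.

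With this in hand, $u_k$ increases to some $u$ with $\underline u\le u\le\bar u$. The upper semicontinuous regularization of the limit coincides with it off a pluripolar set, and bounded monotone convergence (Bedford–Taylor continuity along increasing sequences, valid in the Hermitian setting) passes to the limit in the equation. Since $F(\cdot,u_k)\to F(\cdot,u)$ almost everywhere, dominated convergence applies, and $u$ is a bounded $\theta$-psh solution of \eqref{eq: CMAeq}. Finally, Theorem \ref{thm: bdd implies smooth intro} gives smoothness of $u$ in $\Omega\setminus D$.

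This only gives $\Omega\setminus D$, whereas the statement claims $\Omega$; either $D$ is implicitly excluded, or one needs an extra argument, for instance choosing $\rho$ so that $D$ avoids $\Omega$. I expect the main obstacle to be justifying the comparison/domination principle and the solvability of the auxiliary equations at this level of generality: Hermitian $\theta$ that is only big, and densities with $e^{-\psi^-}$ singularities.
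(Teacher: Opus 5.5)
Your proposal is correct and is essentially the paper's proof. The paper, in Proposition \ref{prop:Existence}, uses the same iteration $(\theta+dd^c u_{k})^n=e^{Au_{k}+h(\cdot,u_{k-1})}g\,\omega_X^n$ with $h=f-At$ non-increasing, solved by \cite[Theorem B]{GL23Crelle}, with the same domination-principle induction giving $\underline u\le u_k\le u_{k+1}\le\bar u$ and the same Bedford--Taylor monotone limit, followed by Theorem \ref{thm: bdd implies smooth intro} for regularity.

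Both of your caveats are resolved in the paper's detailed restatement (Theorem \ref{thm:ExistenceBoundedSol}) exactly as you guessed. The $L^p$ integrability of $e^{\psi^+-\psi^-}$ is imposed as the explicit hypothesis \eqref{eq:Lpcondition}; it is not automatic from quasi-plurisubharmonicity when $\psi^+\neq 0$. Smoothness is asserted only in $\Omega\setminus D$.
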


As shown in Example \ref{example: non-increasing f}, bounded subsolutions and supersolutions always exist when $f(\cdot,t)$ is increasing for sufficiently large $|t|$. On the other hand, the existence of a subsolution and a supersolution (without any ordering between them) does not guarantee the existence of a solution, as illustrated in Example \ref{example:general f}.

When $f$ is non-decreasing in $t$, the situation becomes simpler. Indeed, Theorem \ref{thm: bdd sol f increasing} shows that the existence of a bounded subsolution and a bounded supersolution, not necessarily ordered, is sufficient to guarantee solvability of the equation.

\medskip 

 \noindent {\bf Organization of the article.} In the next section we recall elementary and useful inequalities used in the Laplacian estimate, and we also recall useful tools in pluripotential theory.  For pedagogical reason, we start proving Theorem \ref{thm: bdd implies smooth intro} in the simplest K\"ahler setting in Section \ref{sect: Kahler smooth}, and  we will take care of the Hermitian case in   Section \ref{sect: Herm case}. Section \ref{sect: subsol thm} is devoted to the proof of Theorem  \ref{thm:SubSolThm}.

 \begin{ackn} 
This work is partially supported by Institut Universitaire de France, the KRIS project of Fondation Charles Defforey, as well as the Centre Henri Lebesgue ANR-11-LABX-0020-01. There is no use of AI in our work. 
\end{ackn}

\section{Preliminaries}
In the whole article we let $(X,\omega_X)$ be a compact complex manifold of dimension $n$, and fix a Hermitian metric $\omega_X$ on $X$. We also fix a smooth real $(1,1)$-form $\theta$ that is semipositive and big. By big we mean there exists a function $\rho$ with analytic singularities along an analytic set $D\subset X$, such that $\theta+dd^c \rho \geq 2\varepsilon_0 \omega_X$, for some constant $\varepsilon_0>0$. We also assume that $\theta \leq \omega_X$. 

 \subsection{Elementary inequalities}  
 \begin{lemma}\label{lem: elementary inequalities}
Let $\alpha, \beta, \gamma$  be positive $(1,1)$-forms on $X$. Then
$$
\left(\frac{\beta^n}{\alpha^n}\right)^{\frac{1}{n}}\leq \dfrac{1}{n} {\rm Tr}_{\alpha}(\beta)\leq  \left(\frac{\beta^n}{\alpha^n}\right)\cdot \left( {\rm Tr}_{\beta}(\alpha)\right)^{n-1}, 
$$
and ${\rm Tr_{\alpha}(\beta)}\rm Tr_{\beta}(\gamma) \geq \rm Tr_{\alpha}(\gamma)$. 
\end{lemma}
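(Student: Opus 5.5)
The whole lemma is pointwise linear algebra, so I would fix a point $x\in X$ and choose coordinates simultaneously diagonalizing the two forms involved. For the first chain, I normalize so that $\alpha = \sum_i i\,dz_i\wedge d\bar z_i$ and $\beta=\sum_i \lambda_i\, i\,dz_i\wedge d\bar z_i$ with $\lambda_i>0$. Then
\[
\frac{\beta^n}{\alpha^n}=\prod_i \lambda_i, \qquad {\rm Tr}_\alpha(\beta)=\sum_i\lambda_i, \qquad {\rm Tr}_\beta(\alpha)=\sum_i \lambda_i^{-1}.
\]

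The left inequality $\bigl(\prod_i\lambda_i\bigr)^{1/n}\leq \frac1n\sum_i\lambda_i$ is exactly the AM--GM inequality. For the right inequality, I fix an index $i$ and write $\lambda_i = \prod_k\lambda_k\cdot\prod_{k\neq i}\lambda_k^{-1}$. Each factor satisfies $\lambda_k^{-1}\leq \sum_l\lambda_l^{-1}$, so $\prod_{k\neq i}\lambda_k^{-1}\leq \bigl(\sum_l \lambda_l^{-1}\bigr)^{n-1}$. Summing over $i$ gives
\[
\sum_i\lambda_i \leq n\,\frac{\beta^n}{\alpha^n}\,\bigl({\rm Tr}_\beta(\alpha)\bigr)^{n-1},
\]
which is the claim after dividing by $n$. (One could sharpen this by applying AM--GM to the $n-1$ reciprocals, but the crude bound is all that is stated.)

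For the last inequality, I diagonalize $\alpha$ and $\beta$ as above. Since $\gamma$ is a positive form, its diagonal entries $\gamma_{i\bar i}$ in these coordinates are positive. Then
\[
{\rm Tr}_\alpha(\gamma)=\sum_i \gamma_{i\bar i}, \qquad {\rm Tr}_\beta(\gamma)=\sum_i\gamma_{i\bar i}\lambda_i^{-1}.
\]
Using $\lambda_j\leq \sum_i\lambda_i={\rm Tr}_\alpha(\beta)$ for every $j$, I get
\[
{\rm Tr}_\alpha(\beta)\,{\rm Tr}_\beta(\gamma)\geq \sum_j \lambda_j\cdot\gamma_{j\bar j}\lambda_j^{-1}={\rm Tr}_\alpha(\gamma).
\]

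There is no real obstacle here. The only point needing care is that in the third inequality only $\alpha$ and $\beta$ can be diagonalized simultaneously, so the argument must use only the positivity of the diagonal entries of $\gamma$, not a diagonal form for $\gamma$.
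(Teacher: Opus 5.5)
Your proof is correct and follows essentially the same route as the paper: reduce pointwise to the eigenvalues $\lambda_i$ of $\beta$ with respect to $\alpha$ (AM--GM, plus the bound $\prod_{k\neq i}\lambda_k^{-1}\leq ({\rm Tr}_\beta(\alpha))^{n-1}$ that the paper leaves as ``elementary''). For the last inequality both you and the paper simultaneously diagonalize $\alpha,\beta$ and drop the nonnegative cross terms; the only cosmetic difference is that the paper normalizes $\beta$ rather than $\alpha$ to the identity.
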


Recall that ${\rm Tr}_{\alpha}(\beta):= n\dfrac{\alpha^{n-1}\wedge \beta}{\alpha^{n}}$.

\begin{proof}
	The trace of $\beta$ with respect to  $\alpha$ is ${\rm Tr}_{\alpha}(\beta) = \sum_{i}\lambda_{i}$ where $ 0< \lambda_{1} \leq \ldots \leq \lambda_{n}$ are the eigenvalues of $\beta$ with respect to $\alpha$.  The first two inequalities are equivalent to
\[
\left( \prod_{i} \lambda_{i} \right)^{\frac{1}{n}}\leq \dfrac{1}{n} \sum_{i}\lambda_{i}\leq  \left(\prod_{i} \lambda_{i} \right)\cdot \left( \sum_{i}\lambda_{i}^{-1} \right)^{n-1},
\]
which are true by elementary reason.

By a linear change of coordinates around a given point $x_0$, we can write 
\[
\beta= \sqrt{-1} \sum_{j,k} dz_j \wedge d\bar z_k, \; \alpha = \sqrt{-1}\sum_{j} \alpha_j dz_j \wedge d\bar z_j, \; \gamma = \sqrt{-1}\sum_{j,k} \gamma_{jk} dz_j \wedge d\bar z_k. 
\]
At this point $x_0$, the second inequality in the lemma then reads 
\[
\sum_{j,k} \alpha_{j}^{-1} \gamma_{k\bar k} \geq \sum_{j} \alpha_j^{-1} \gamma_{j\bar j},
\]
which is true because all terms involved are non negative. 
\end{proof}

\subsection{The Monge-Amp\`ere operator}
A function $u$ is quasi-plurisubharmonic (or quasi-psh for short) on $X$ if locally $u=g+\varphi$, where $g$ is smooth and $\varphi$ is psh. We say that $u$ is $\theta$-psh if it is quasi-psh and $\theta+dd^c u\geq 0$ in the sense of currents. We let ${\rm PSH}(X,\theta)$ be the set of $\theta$-psh functions that are integrable on $X$. By basic properties of psh functions the latter means exactly that $u$ is not identically $-\infty$. 

Given a bounded $\theta$-psh function $u$, the Monge-Amp\`ere operator $(\theta+dd^c u)^n$ is well-defined as a positive Radon measure on $X$ (see \cite{DK12ALM},\cite{GL22} where the Bedford-Taylor theory \cite{BT76,BT82} is adapted into the Hermitian setting) whose total mass is not zero. This latter fact can be proved using the modified comparison principle in \cite{KN15Phong}. An alternative proof using quasi-psh envelopes can be found in \cite{DnL25}. 

One of the most important and powerful tools in pluripotential theory is the following comparison principle. 
\begin{prop}\label{prop: max, comp, dom principle}
	Assume $u,v$ are bounded $\theta$-psh functions. Then 
	\[
	{\bf 1}_{\{u>v\}}(\theta+dd^c \max(u,v))^n ={\bf 1}_{\{u>v\}}(\theta+dd^c u)^n,
	\]
	\[
	{\bf 1}_{\{u=v\}}(\theta+dd^c \max(u,v))^n \geq {\bf 1}_{\{u=v\}}(\theta+dd^c u)^n.
	\]
	If ${\bf 1}_{\{u<v\}}(\theta+dd^c u)^n \leq c {\bf 1}_{\{u<v\}} (\theta+dd^c v)^n$, for some $0\leq c<1$, then $u\geq v$. In particular, 
	\begin{enumerate}
		\item if $(\theta+dd^c u)^n\leq e^{u-v}(\theta+dd^c v)^n$, then $u\geq v$; 
	\item if if $(\theta+dd^c u)^n\leq c(\theta+dd^c v)^n$, for some $c\geq 0$, then $c\geq 1$. 
	\end{enumerate}
\end{prop}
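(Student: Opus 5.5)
The plan is to treat the four assertions in order. The first two, maximum principle identities, are local statements. The comparison/domination statement is global and uses the non-vanishing of Monge-Amp\`ere mass together with a perturbation argument.

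\emph{Maximum principle.} Both identities are local, so we work in a small coordinate ball where $\theta = dd^c g$ for a smooth local potential $g$; this is possible since $\theta$ is closed real $(1,1)$ locally after adding a smooth correction, or one can argue directly with Bedford--Taylor theory for $\theta$ non-closed as in \cite{DK12ALM,GL22}. Then $u+g$ and $v+g$ are bounded psh functions. Since $u>v$ is open in the plurifine topology and Bedford--Taylor Monge-Amp\`ere measures are plurifine-local, we get the equality on $\{u>v\}$. For the inequality on $\{u=v\}$, we use the classical Demailly inequality $(dd^c\max(a,b))^n \geq {\bf 1}_{\{a\geq b\}}(dd^c a)^n+{\bf 1}_{\{a<b\}}(dd^c b)^n$. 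This is proved by applying the first identity to $\max(u+\epsilon, v)$, noting that $\{u+\epsilon>v\}\supset\{u\geq v\}$, and letting $\epsilon\to 0$ using weak convergence and lower semicontinuity of the mass on the closed set $\{u\geq v\}$ (or quasi-open sets). In the Hermitian setting, the mixed terms involving $d\theta$ and $dd^c\theta$ are handled in the same local framework, since the operator remains local in the plurifine sense.

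\emph{Domination principle.} Suppose ${\bf 1}_{\{u<v\}}(\theta+dd^c u)^n\leq c\,{\bf 1}_{\{u<v\}}(\theta+dd^c v)^n$ with $c<1$, and assume for contradiction that $\{u<v\}$ is nonempty, so that it has positive capacity. We fix a small $\delta>0$ with $c<(1-\delta)^n$ and form $w=\max(u,(1-\delta)v+\delta\rho' - C)$, where $\rho'$ is a bounded $\theta$-psh function with $(\theta+dd^c\rho')^n$ suitably positive. Here we use bigness and the boundedness theory (for instance a bounded solution of $(\theta+dd^c\rho')^n=c'\omega_X^n$, obtained from \cite{KN15Phong}), with the constant $C$ chosen so that the set where the second term wins is nonempty but contained in $\{u<v\}$. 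On this set the mass of $w$ is at least $(1-\delta)^n(\theta+dd^c v)^n+\delta^n(\theta+dd^c\rho')^n$, which is strictly larger than the mass of $u$ there. Comparing global masses via the modified comparison principle of Ko{\l}odziej--Nguyen \cite{KN15Phong}, which controls $\int_{\{u<w\}}$ up to error terms that vanish as the perturbation constant shrinks, yields a contradiction. I expect this step to be the main obstacle, since in the non-K\"ahler case the total mass is not preserved; the argument must rely on the error terms in \cite{KN15Phong}, or on the envelope technique of \cite{DnL25}.

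\emph{Consequences.} For (1), set $w=v$. On $\{u<v\}$ we have $e^{u-v}<1$, but this is not uniform, so we apply the principle to $v-\epsilon$. On $\{u<v-\epsilon\}$ we get $e^{u-v}\leq e^{-\epsilon}<1$, hence $u\geq v-\epsilon$, and letting $\epsilon\to0$ gives $u\geq v$. For (2), if $c<1$, then the hypothesis holds on every set $\{u<v+t\}$, so the domination principle applied to $v+t$ gives $u\geq v+t$ for all $t$. This contradicts boundedness.
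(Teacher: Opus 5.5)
\textbf{The gap is in the domination principle.} This is the only non-formal assertion in the statement. The paper does not prove it either: it cites \cite{GZbook} for the K\"ahler case and \cite{GL23Crelle}, \cite[Corollary 2.27]{DnL25} for the Hermitian case, and these treat the degenerate setting with quasi-psh envelope arguments.

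The tool you lean on, the modified comparison principle of \cite{KN15Phong}, is proved for a strictly positive Hermitian reference form. Its constant comes from bounding $dd^c\omega$ and $d\omega\wedge d^c\omega$ by $B\omega^2$ and $B\omega^3$. Such bounds fail in general when $\theta$ is only semipositive: locally, $\theta=e^{-|z_1|^2}\,i\,dz_2\wedge d\bar z_2$ is semipositive with $\theta^2\equiv 0$ but $dd^c\theta\neq 0$. So in the generality of the statement, where $\theta$ is only semipositive and big, the inequality you invoke is not available. Likewise, \cite{KN15Phong} does not provide the bounded $\rho'$ with $(\theta+dd^c\rho')^n=c'\omega_X^n$ for degenerate $\theta$; that is a theorem of \cite{GL23Crelle}.

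Even for $\theta>0$, the result of \cite{KN15Phong} does not control $\int_{\{u<w\}}$ for any $C$ that merely makes the set nonempty and contained in $\{u<v\}$. What it gives is
\[
\int_U (\theta+dd^c (1-\varepsilon)\psi)^n\le \Big(1+\frac{C' s}{\varepsilon^n}\Big)\int_U(\theta+dd^c u)^n,\qquad U=\{u<(1-\varepsilon)\psi+\inf_X(u-(1-\varepsilon)\psi)+s\},
\]
and only for $s$ below a threshold of order $\varepsilon^3$. So your constant is forced to be $C=-\inf_X(u-(1-\varepsilon)\psi)-s$. Moreover, it is the \emph{multiplicative} form of the error that makes a contradiction possible. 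An additive $O(s)$ error would give nothing, because the masses of $U$ also shrink as $s\to 0$, possibly to zero.

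\textbf{How it closes for $\theta>0$.} Used this way, your idea works, and more simply than you suggest. Take $\psi=v$; no $\rho'$ is needed, since $\theta^n$ is already a volume form. Because $\inf_X(u-v)<0$, one has $U\subset\{u<v\}$ as soon as $\varepsilon\,{\rm osc}_X v+s<-\inf_X(u-v)$. Then, using $(\theta+dd^c(1-\varepsilon)v)^n\ge \varepsilon^n\theta^n+(1-\varepsilon)^n(\theta+dd^cv)^n$,
\[
\varepsilon^n\int_U\theta^n+\Big((1-\varepsilon)^n-c\big(1+C's\varepsilon^{-n}\big)\Big)\int_U(\theta+dd^cv)^n\le 0 .
\]
For $(1-\varepsilon)^n>c$ and $s$ small, this forces $U$ to be Lebesgue-null. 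That is impossible: $U$ is nonempty, and an a.e.\ inequality between quasi-psh functions holds everywhere.

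\textbf{What the degenerate case needs.} For semipositive big $\theta$ you need a different mechanism. One option is the envelope arguments of \cite{GL23Crelle,DnL25}. Another, plausibly, is a genuine adaptation of \cite{KN15Phong}, for instance to the singular reference form $\theta+dd^c\rho\ge 2\varepsilon_0\omega_X$ on $X\setminus D$, whose torsion terms are those of $\theta$. Your proposal supplies neither.

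\textbf{The remaining parts are fine.} The two maximum-principle identities and the consequences (1) and (2) are correct: passing to $v-\epsilon$ with $c=e^{-\epsilon}$, and to $v+t$ for all $t$, is exactly right. Two small repairs are needed:
\begin{itemize}
\item A non-closed $\theta$ has no local $dd^c$-potential. Apply plurifine locality termwise to $\sum_k\binom nk\theta^{n-k}\wedge(dd^cu)^k$ instead.
\item The set $\{u\ge v\}$ is not closed. What you actually use is that each $(\theta+dd^c\max(u+\epsilon,v))^n$ dominates the fixed measure ${\bf 1}_{\{u\ge v\}}(\theta+dd^cu)^n$, and domination of a fixed measure survives the weak limit as $\epsilon\to0$.
\end{itemize}
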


The first two (in)equalities are often referred to as the pluripotential maximum principle, while the last conclusion is called the domination principle. We refer the reader to \cite{GZbook}, for a proof in the K\"ahler case and \cite{GL23Crelle}, \cite[Corollary 2.27]{DnL25} for the Hermitian case.

Given a function $h$ we define $P_{\theta}(h)$ to be the largest $\theta$-psh functions lying below $h$. It is the upper semicontinuous regularization of the upper envelope of all $\theta$-psh functions lying below $h$. When $h$ is smooth and $\theta>0$, one can approximate $P_{\theta}(h)$ by using Berman's $\beta$-convergence method \cite{Ber19}, as illustrated in \cite{CZ19}. The consequence of this approximation scheme is that $P_{\theta}(h)$ is in $C^{1,1}(X)$ and the Monge-Amp\`ere measure $(\theta+dd^c P_{\theta}(h)^n$ is concentrated on the contact set $\{P_{\theta}(h)=h\}$. The latter fact is often referred to as the projection theorem. For less regular $h$ and for merely semipositive and big form $\theta$, the projection theorem still holds, as explained in \cite{GL22} building on \cite{GLZ19}. 

\begin{prop}
	Assume $h$ is quasi-continuous and bounded from below on $X$. Then $P_{\theta}(h)$ is a bounded $\theta$-psh function whose Monge-Amp\`ere measure $(\theta+dd^c P_{\theta}(h))^n$ vanishes in $\{P_{\theta}(h) <h\}$. 
\end{prop}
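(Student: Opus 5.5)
The plan has two parts: show that $P_{\theta}(h)$ is bounded, and show that its Monge-Amp\`ere measure puts no mass on the non-contact set.

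\emph{Boundedness.} Let $m=\inf_X h>-\infty$. Then the constant $m$ is $\theta$-psh, since $\theta\geq 0$, and it lies below $h$. Hence $P_{\theta}(h)\geq m$. For the upper bound I take $u$ a $\theta$-psh function with $u\leq h$. Since $h$ is quasi-continuous, it is finite outside a set of arbitrarily small capacity, so $h\leq M$ on some nonpluripolar set $E$. I then use the compactness of $\{u\in {\rm PSH}(X,\theta):\ \sup_X u=0\}$ in $L^1$: a nonpluripolar set cannot be one on which $u-\sup_X u$ is uniformly very negative. Concretely, $\sup_X u\leq \sup_E u + C_E$ with $C_E$ independent of $u$, and this gives $\sup_X u\leq M+C_E$. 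Taking the usc regularization of the envelope preserves this bound. So $P_{\theta}(h)$ is a bounded $\theta$-psh function.

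\emph{Contact property.} I also need $P_{\theta}(h)\leq h$ quasi-everywhere, that is, outside a pluripolar set. This holds because the usc regularization differs from the envelope only on a pluripolar set (Bedford--Taylor negligible sets, adapted to the Hermitian setting in \cite{GL22}). Quasi-continuity of $h$ then ensures that the inequality, which holds off a pluripolar set, is compatible with the capacity notions used below.

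\emph{Vanishing of the measure.} Write $\varphi=P_{\theta}(h)$. I first treat continuous $h$ by the standard balayage argument. Fix a small coordinate ball $B\Subset\{\varphi<h-\delta\}$. This set is open when $h$ is continuous, since $\varphi$ is usc. On $B$, write $\theta=dd^c g$ locally with $g$ smooth. I solve the Dirichlet problem for the local Monge-Amp\`ere operator with boundary data $g+\varphi$, using Bedford--Taylor, to get $\tilde\varphi\geq\varphi$ on $B$ with $(\theta+dd^c\tilde\varphi)^n=0$ in $B$. By the comparison principle of Proposition \ref{prop: max, comp, dom principle}, applied locally, $\tilde\varphi$ stays below $\sup_{\partial B}(\varphi+\text{oscillation})$. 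So by continuity of $h$, after shrinking $B$, we still have $\tilde\varphi\leq h$. Gluing $\tilde\varphi$ with $\varphi$ outside $B$ gives a $\theta$-psh competitor, so $\tilde\varphi\leq\varphi$. Hence $\varphi$ is maximal in $B$ and $(\theta+dd^c\varphi)^n=0$ there.

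For general quasi-continuous $h$ bounded below, I would approximate in capacity. I choose continuous $h_j$, bounded below uniformly, that agree with $h$ outside open sets $G_j$ with ${\rm Cap}(G_j)\to 0$, and arrange $h_j$ to be monotone where possible (for instance $h_j\downarrow$ quasi-everywhere). Then $P_{\theta}(h_j)$ converges to $P_{\theta}(h)$ in capacity, by monotone convergence of envelopes and the quasi-everywhere equality above. I pass to the limit using continuity of the Monge-Amp\`ere operator along sequences of uniformly bounded $\theta$-psh functions converging in capacity. This gives
\[
\int \chi\,{\bf 1}_{\{\varphi<h-\delta\}}\,(\theta+dd^c\varphi)^n=0
\]
for test functions $\chi$, modulo the small sets $G_j$. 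Since Monge-Amp\`ere measures of bounded functions do not charge sets of small capacity uniformly, letting $\delta\to 0$ gives the conclusion.

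\textbf{Main obstacle.} The hard step is this approximation. The set $\{\varphi<h\}$ is only quasi-open, and I must control the limits of ${\bf 1}_{\{P(h_j)<h_j\}}$ against the measures. I would handle this as in \cite{GLZ19}: use that $h$ is continuous on $X\setminus G_j$, and apply the lower semicontinuity of $\mu\mapsto\int_U d\mu$ over the quasi-open set $\{\varphi<h\}\setminus G_j$.
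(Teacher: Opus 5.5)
The paper does not prove this proposition; it quotes it from \cite{GL22}, building on \cite{GLZ19}. Your route (balayage for continuous obstacles, then approximation) is the standard one. Your boundedness argument and your remark that $P_\theta(h)\le h$ off a pluripolar set are fine. Two steps, however, fail as written. First, $\theta$ here is a smooth semipositive form on a \emph{Hermitian} manifold and is not assumed closed. So in general there is no local potential with $\theta=dd^c g$, and the Bedford--Taylor Dirichlet problem for the flat operator $(dd^c\,\cdot\,)^n$ does not give you a $\tilde\varphi$ with $(\theta+dd^c\tilde\varphi)^n=0$ in $B$. You need instead the Dirichlet problem for the operator $u\mapsto(\theta+dd^c u)^n$ on small balls. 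Replace $\theta$ by the positive form $\theta+A\,dd^c|z|^2$ and $u$ by $u-A|z|^2$, take continuous boundary data decreasing to $\varphi|_{\partial B}$, and use the corresponding local comparison principle, for instance from Ko{\l}odziej--Nguyen \cite{KN15Phong}. Your upper bound on $\tilde\varphi$ then survives if you use a local $g$ with $dd^c g\ge\theta$ (say $g=A|z-x_0|^2$): $\tilde\varphi+g$ is psh, and ${\rm osc}_B\, g\to 0$ as $B$ shrinks.

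Second, and more seriously, the approximation step relies on continuous $h_j$ that coincide with $h$ off $G_j$ and decrease to $h$ quasi-everywhere. Such a sequence does not exist in general. A decreasing limit of continuous functions is usc (an increasing one is lsc), but a quasi-continuous function need not agree q.e.\ with any usc function. For example, take $h=-u$ with $u$ a bounded quasi-psh function that is discontinuous at every point of a non-pluripolar set; since the sublevel sets of $u$ are open, no usc function can equal $-u$ off a pluripolar set. Hence the claim ``$P_\theta(h_j)\to P_\theta(h)$ in capacity by monotone convergence'', which is the heart of the matter, is unproved. Also, weak convergence of the Monge--Amp\`ere measures cannot be tested directly against ${\bf 1}_{\{\varphi<h-\delta\}}$. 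One way to repair this is to pass through lower semicontinuous obstacles.

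(i) For $h$ lsc and bounded, take continuous $h_j\uparrow h$. A Dini argument gives $P_\theta(h_j)\uparrow P_\theta(h)$ a.e. The \emph{open} sets $U_k=\{P_\theta(h)<h_k\}$ lie in $\{P_\theta(h_j)<h_j\}$ for $j\ge k$. Hence $(\theta+dd^cP_\theta(h))^n(U_k)\le\liminf_j(\theta+dd^cP_\theta(h_j))^n(U_k)=0$, and $\bigcup_k U_k=\{P_\theta(h)<h\}$.

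(ii) For quasi-continuous $h$, first replace $h$ by $\min(h,C)$ with $C>\sup_X P_\theta(h)$; this changes neither $P_\theta(h)$ nor $\{P_\theta(h)<h\}$. Pick open $G_k\downarrow$ with ${\rm Cap}(G_k)\to0$ and $h|_{X\setminus G_k}$ continuous. Set $h_k=h$ on $X\setminus G_k$ and $h_k=\sup_X h$ on $G_k$. These $h_k$ are lsc and decrease to $h$ q.e. Using that a $\theta$-psh function lying below $h$ q.e.\ lies below $P_\theta(h)$, you get that $\varphi_k:=P_\theta(h_k)$ decreases to $\varphi:=P_\theta(h)$, hence converges in capacity. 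Now $\{\varphi<h-\delta\}\subset G_k\cup\{\varphi_k\ge\varphi+\delta\}\cup\{\varphi_k<h_k\}$. Combine this with step (i), the uniform bound $(\theta+dd^c\varphi_k)^n\le C\,{\rm Cap}$, and lower semicontinuity of the mass of the quasi-open set $\{\varphi<h-\delta\}$ along $(\theta+dd^c\varphi_k)^n\to(\theta+dd^c\varphi)^n$. This gives $(\theta+dd^c\varphi)^n(\{\varphi<h-\delta\})=0$ for every $\delta>0$.
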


It was also proved in \cite{GLZ19} that the pluripotential minimum principle holds. Together with the pluripotential maximum principle, this gives a very flexible way to study limits of solutions to complex Monge-Amp\`ere equations, especially when convergence in capacity is missing.  
\begin{prop}\label{prop: min principle}
	Assume $\mu$ is a positive non-pluripolar measure on $X$, $g : X \times \R \rightarrow \mathbb R$ locally bounded function continuous in the second variable, $(f_j)$ is a sequence of non-negative $L^1(\mu)$-functions, and $(u_j)$ is a uniformly bounded sequence of $\theta$-psh functions such that 
	\[
	(\theta+dd^c u_j)^n \leq g(\cdot,u_j) f_j \mu. 
	\] 
	Then $u:=P_{\theta}(\inf_j u_j)$ is in ${\rm PSH}(X,\theta)\cap L^{\infty}$, and 
	\[
	(\theta+dd^c u)^n \leq g(\cdot,u) \left(\sup_j f_j\right)\mu. 
	\]
\end{prop}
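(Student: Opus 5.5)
The plan is to prove Proposition~\ref{prop: min principle} by first reducing to finitely many functions and then passing to a decreasing limit.

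\textbf{Boundedness.} Set $w_k := P_\theta(\min_{j\le k} u_j)$. Since all $u_j \ge -C$, the constant $-C$ is a $\theta$-psh competitor (recall $\theta\ge 0$), so $w_k \ge -C$. Also $w_k \le \min_{j\le k} u_j \le C$. The sequence $(w_k)$ decreases, and its limit is $\theta$-psh and lies below $\inf_j u_j$. Conversely, $u \le \min_{j \le k} u_j$ gives $u \le w_k$ for every $k$. Hence $w_k \searrow u$, and $u$ is bounded.

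\textbf{Finite step.} The main claim is
\[
(\theta+dd^c w_k)^n \le g(\cdot,w_k)\,\max_{j\le k} f_j\,\mu .
\]
Write $h_k=\min_{j\le k}u_j$. Each $u_j$ is quasi-continuous, hence so is $h_k$, and the projection theorem (the preceding proposition) says $(\theta+dd^c w_k)^n$ is carried by the contact set $\{w_k=h_k\}$. This set is covered by the sets $E_j=\{w_k=u_j\}$, $j\le k$, and on $E_j$ we have $w_k\le u_j$ with equality.

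We now use the maximum principle of Proposition~\ref{prop: max, comp, dom principle}, applied to the pair $(w_k,u_j)$ through $\max(w_k,u_j)=u_j$. Its second inequality gives
\[
{\bf 1}_{E_j}(\theta+dd^c w_k)^n \le {\bf 1}_{E_j}(\theta+dd^c u_j)^n \le {\bf 1}_{E_j}\, g(\cdot,u_j) f_j\,\mu = {\bf 1}_{E_j}\, g(\cdot,w_k) f_j\,\mu .
\]
We may assume $g\ge 0$ on the relevant range, or else replace $g$ by $g^+$ there; this is harmless because the left-hand side is non-negative. Summing over $j\le k$ gives the claim. The overlaps between the $E_j$ only help, since we bound each piece by the maximum and a union bound suffices.

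\textbf{Passage to the limit.} This is the step I expect to be the main obstacle. The measures $(\theta+dd^c w_k)^n$ converge weakly to $(\theta+dd^c u)^n$ by Bedford--Taylor continuity along bounded decreasing sequences, valid in the Hermitian setting. Moreover,
\[
(\theta+dd^c w_k)^n \le g(\cdot,w_k)\,F\,\mu, \qquad F:=\sup_j f_j .
\]
The right-hand side has the pointwise limit $g(\cdot,u)F\mu$, using continuity of $g$ in $t$ and local boundedness to dominate by $C F\mu$. The difficulty is that $F$ need not be integrable, and the measures involve the varying $w_k$.

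To handle this, I would fix $\varepsilon>0$ and a compact set $K$. Since $\mu$ is non-pluripolar and quasi-psh functions are quasi-continuous, we can choose an open set $G$ of small capacity off which all $w_k$ and $u$ are continuous. By Dini's theorem, $w_k\to u$ uniformly on $X\setminus G$. Uniform continuity of $g$ in $t$ on compact intervals then gives $g(\cdot,w_k)\le g(\cdot,u)+\varepsilon$ on $X\setminus G$.

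Testing against a non-negative continuous function $\chi$, the measures ${\bf 1}_{X\setminus G}(\theta+dd^c w_k)^n$ have limit points dominated by $(g(\cdot,u)+\varepsilon)F\mu$. The mass on $G$ is controlled by the capacity, uniformly in $k$, since the $w_k$ are uniformly bounded. Letting the capacity of $G$ tend to $0$ and then $\varepsilon\to 0$ gives
\[
(\theta+dd^c u)^n \le g(\cdot,u) F\mu .
\]
If $F\mu$ is not locally finite, the inequality is trivial wherever $F=\infty$. On the other hand, $(\theta+dd^c u)^n$ does not charge pluripolar sets, which is why non-pluripolarity of $\mu$ is needed for the comparison.
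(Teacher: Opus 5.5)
Your first two steps are correct for $g\ge0$, and they are the standard argument; the paper itself gives no proof, only a reference to GLZ19. These steps are $w_k=P_\theta(\min_{j\le k}u_j)\searrow u$, the projection theorem on the contact set, and the maximum principle on each $E_j=\{w_k=u_j\}$. You should disjointify the $E_j$ before summing: a plain union bound gives $\sum_{j\le k}f_j$, not $\max_{j\le k}f_j$.

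The genuine gap is the passage to the limit when $F=\sup_j f_j\notin L^1(\mu)$, and it cannot be closed. Your remark that the inequality is ``trivial wherever $F=\infty$'' confuses $F=\infty$ with $F\mu=\infty$. The set $\{F=\infty\}$ is typically $\mu$-null, so $F\mu$ gives it no mass, while the measures $(\theta+dd^c w_k)^n$ may concentrate exactly there. The fact that $(\theta+dd^c u)^n$ does not charge pluripolar sets does not help.

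Concretely, on $X=\mathbb P^1$ let $\theta=\mu=\omega_{FS}=dd^c\frac12\log(1+|z|^2)$, $g\equiv1$, and
\[
u_p=\tfrac{1}{2p}\log\bigl(1+|z|^{2p}\bigr)-\tfrac12\log\bigl(1+|z|^2\bigr),\qquad p\ge1.
\]
These are smooth $\omega_{FS}$-psh functions with $\omega_{FS}+dd^c u_p=f_p\,\mu$ and $f_p\ge0$ smooth. Set $u=\log\max(|z|,1)-\frac12\log(1+|z|^2)$. Then $0\le u_p-u\le\frac{\log 2}{2p}$, so $P_\theta(\inf_p u_p)=u$. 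But $\omega_{FS}+dd^c u=dd^c\log\max(|z|,1)$ is a multiple of arclength on $\{|z|=1\}$, which is a non-pluripolar $\mu$-null set. Hence it is not dominated by $(\sup_p f_p)\mu$ as Borel measures, which is the sense needed later for the domination principle. Here $\sup_p f_p=+\infty$ exactly on that circle, and it is not $\mu$-integrable. Taking products gives the same failure in any dimension.

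So the statement needs the extra hypothesis $F\in L^1(\mu)$. Under that hypothesis your capacity/Dini machinery is unnecessary. Pointwise $g(\cdot,w_k)F\to g(\cdot,u)F$ with $|g(\cdot,w_k)|F\le CF$, so $g(\cdot,w_k)F\mu\to g(\cdot,u)F\mu$ in total variation. Meanwhile $(\theta+dd^c w_k)^n\to(\theta+dd^c u)^n$ weakly, and testing against continuous $\chi\ge0$ finishes. This also removes your claim $g(\cdot,w_k)\le g(\cdot,u)+\varepsilon$ on $X\setminus G$, which needs continuity of $g$ in $t$ uniformly in $x$; that is not among the hypotheses.

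Finally, replacing $g$ by $g^+$ is not harmless: it weakens the conclusion, and for signed $g$ the statement is false. Take $\theta=\omega_X$ K\"ahler, $\mu=\omega_X^n$, $g(x,t)=t-a(x)$, $u_2\equiv M>\max(\sup a,\sup u_1)$ and $f_2=(M-a)^{-1}$. Then $u=u_1$, and the conclusion forces $u_1\ge a$ $\mu$-a.e. The hypothesis on $u_1$ forces this only where $f_1>0$. In the paper's application $g(\cdot,t)=e^{t/b}>0$, so $g\ge0$ is the right standing assumption.
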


\begin{prop} \label{prop: max principle}
	Assume $\mu$ is a positive non-pluripolar measure on $X$, $g : X \times \R \rightarrow \mathbb R$ locally bounded function continuous in the second variable, $(f_j)$ is a sequence of non-negative $L^1(\mu)$-functions, and $(u_j)$ is a uniformly bounded sequence of $\theta$-psh functions such that 
	\[
	(\theta+dd^c u_j)^n \geq g(\cdot,u_j) f_j \mu. 
	\] 
	Then $u:= (\sup_j u_j)^*$ is a bounded $\theta$-psh function and 
	\[
	(\theta+dd^c u)^n \geq g(\cdot,u) \left(\inf_j f_j\right)\mu. 
	\]
\end{prop}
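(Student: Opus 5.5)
We need to prove the maximum principle proposition: $u_j$ uniformly bounded $\theta$-psh with $(\theta+dd^c u_j)^n \ge g(\cdot,u_j) f_j\mu$; then $u=(\sup_j u_j)^*$ is bounded $\theta$-psh and $(\theta+dd^c u)^n\ge g(\cdot,u)(\inf_j f_j)\mu$.

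The plan is to reduce to finite maxima and then pass to an increasing limit. Set $w_k:=\max(u_1,\dots,u_k)$, a uniformly bounded $\theta$-psh function. The sequence $w_k$ increases to $\sup_j u_j$ outside a pluripolar set, and $w_k\nearrow u$ almost everywhere with respect to any non-pluripolar measure, since $\{\sup_j u_j<u\}$ is pluripolar. Boundedness of $u$ is immediate from the uniform bounds on $u_j$.

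\textbf{Step 1 (finite maxima).} We will show
\[
(\theta+dd^c w_k)^n \ge g(\cdot,w_k)\,\Big(\min_{j\le k} f_j\Big)\mu .
\]
By induction it suffices to treat two functions $a=u_1$, $b=u_2$ with measures $\ge g(\cdot,a)f_1\mu$ and $\ge g(\cdot,b)f_2\mu$. Write $X=\{a>b\}\cup\{a<b\}\cup\{a=b\}$. Proposition \ref{prop: max, comp, dom principle} gives ${\bf 1}_{\{a>b\}}(\theta+dd^c\max(a,b))^n={\bf 1}_{\{a>b\}}(\theta+dd^c a)^n$. Here $\{a>b\}$ is only quasi-open, but the identity as stated in the proposition is exactly what we need. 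On this set $\max(a,b)=a$, so the desired inequality holds there; symmetrically it holds on $\{a<b\}$. On $\{a=b\}$, the second inequality of the proposition gives the bound by $(\theta+dd^c a)^n\ge g(\cdot,a)f_1\mu=g(\cdot,\max)f_1\mu$. Combining the three pieces gives the claim with $\min(f_1,f_2)$.

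One subtlety: if $g$ can take negative values, the right-hand side is only meaningful where $g\ge0$. Since the left-hand side is positive, we may replace $g$ by $g^+$ throughout, and the statement for $g^+$ implies the one for $g$.

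\textbf{Step 2 (limit).} Set $F:=\inf_j f_j$, so that $(\theta+dd^c w_k)^n\ge g^+(\cdot,w_k)F\mu$ for every $k$. Bedford--Taylor monotone continuity in the Hermitian setting (\cite{DK12ALM,GL22}) gives $(\theta+dd^c w_k)^n\to(\theta+dd^c u)^n$ weakly, because $w_k\nearrow u$ almost everywhere with respect to capacity. On the other side, $g^+(\cdot,w_k)\to g^+(\cdot,u)$ $\mu$-a.e. by continuity of $g$ in $t$, using that $\mu$ puts no mass on pluripolar sets. Since $F\in L^1(\mu)$ and $g^+$ is locally bounded on the bounded range of the $w_k$, dominated convergence yields $g^+(\cdot,w_k)F\mu\to g^+(\cdot,u)F\mu$ in total variation. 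Testing both sides against nonnegative continuous functions and passing to the limit gives the inequality.

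\textbf{Main obstacle.} The delicate point is the weak convergence in Step 2 when the $w_k$ are not continuous. Monotone convergence for increasing sequences of bounded $\theta$-psh functions, in the Hermitian case, is exactly the Bedford--Taylor-type result quoted from \cite{DK12ALM,GL22}, so we will invoke it directly. A secondary issue is the use of the maximum principle on non-open sets in Step 1; this is handled because Proposition \ref{prop: max, comp, dom principle} is stated with indicator functions. If needed, we can restrict to a bounded interval $[-C,C]$ containing all values, so that local boundedness of $g$ becomes a uniform bound.
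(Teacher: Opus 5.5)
Your proof is correct. The paper states this proposition without proof, attributing it to the pluripotential maximum principle and \cite{GLZ19}. Your route is exactly the standard proof that reference relies on: finite maxima handled by the two (in)equalities of Proposition \ref{prop: max, comp, dom principle} on $\{a>b\}$, $\{a<b\}$, $\{a=b\}$, then Bedford--Taylor continuity along the increasing sequence $w_k$, combined with dominated convergence in $L^1(\mu)$ on the right-hand side (using that $\mu$ does not charge the pluripolar set $\{\sup_j u_j<u\}$). Your reduction to $g^+$ is sound and genuinely needed, since where $g<0$ the inequality $g f_1\ge g\min(f_1,f_2)$ fails.
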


The following trick was used several times in \cite{DDL5},  \cite{LZ25} and we will use it again in this article. 

\begin{lemma}\label{lem: b trick}
	Assume $u,\psi$ are bounded $\theta$-psh functions and set $u_b:=P_{\theta}(bu-(b-1)\psi)$, where $b>1$ is a constant. Then 
	\[
	(\theta+dd^c u_b)^n \leq {\bf 1}_{\{u_b=bu-(b-1)\psi\}}b^n (\theta+dd^c u)^n. 
	\]
\end{lemma}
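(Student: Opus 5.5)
The plan is to reduce the statement to the projection theorem: $(\theta+dd^c u_b)^n$ is carried by the contact set $K:=\{u_b = bu-(b-1)\psi\}$, and on that set $u$ touches from above a $\theta$-psh function built from $u_b$ and $\psi$, so the maximum principle of Proposition~\ref{prop: max, comp, dom principle} applies. First, $h:=bu-(b-1)\psi$ is bounded and quasi-continuous, being a difference of bounded quasi-psh functions. By the proposition on $P_\theta(h)$ (the projection theorem), $u_b$ is a bounded $\theta$-psh function and $(\theta+dd^c u_b)^n$ vanishes on $\{u_b<h\}$. Since $u_b\le h$ everywhere, this gives
\[
(\theta+dd^c u_b)^n={\bf 1}_{K}(\theta+dd^c u_b)^n .
\]
It therefore remains to compare the two measures on $K$.

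For this, set
\[
w:=\frac{1}{b}u_b+\Bigl(1-\frac1b\Bigr)\psi .
\]
This is a convex combination of bounded $\theta$-psh functions, so it is bounded and $\theta$-psh. Moreover $u_b\le bu-(b-1)\psi$ is equivalent to $w\le u$, with equality exactly on $K$.

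By the second relation in Proposition~\ref{prop: max, comp, dom principle}, applied to the pair $(w,u)$, we have $\max(w,u)=u$, and hence
\[
{\bf 1}_{K}(\theta+dd^c u)^n={\bf 1}_{\{w=u\}}(\theta+dd^c \max(w,u))^n\ \ge\ {\bf 1}_{\{w=u\}}(\theta+dd^c w)^n .
\]

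Next we bound $(\theta+dd^c w)^n$ from below. Write
\[
\theta+dd^c w=\frac1b(\theta+dd^c u_b)+\Bigl(1-\frac1b\Bigr)(\theta+dd^c\psi),
\]
a sum of two positive currents. Expanding the $n$-th power multilinearly in the Bedford--Taylor sense (valid for bounded quasi-psh functions in the Hermitian setting), every mixed term is a positive measure. Discarding them leaves
\[
(\theta+dd^c w)^n\ \ge\ b^{-n}(\theta+dd^c u_b)^n .
\]

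Combining the pieces:
\[
(\theta+dd^c u_b)^n={\bf 1}_K(\theta+dd^c u_b)^n\le b^n{\bf 1}_K(\theta+dd^c w)^n\le b^n{\bf 1}_K(\theta+dd^c u)^n .
\]
This is the claimed inequality.

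The main point needing care is the use of the maximum principle on the equality set $\{w=u\}$ rather than on $\{w>u\}$. This is exactly why the second (inequality) version of Proposition~\ref{prop: max, comp, dom principle} is needed. The other point to check is that mixed Monge--Amp\`ere terms of bounded $\theta$-psh functions are well defined and positive in the Hermitian setting, for which we rely on the Bedford--Taylor theory recalled above.
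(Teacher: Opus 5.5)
Your proof is correct and follows the paper's argument step by step. You use the projection theorem to concentrate $(\theta+dd^c u_b)^n$ on the contact set, then the equality-set form of the pluripotential maximum principle for $b^{-1}u_b+(1-b^{-1})\psi\le u$, and finally the multilinear expansion with the positive mixed terms discarded; you simply make explicit the details the paper leaves implicit.
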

\begin{proof}
	From the projection theorem we know that $(\theta+dd^c u_b)^n$ is concentrated on the contact set $E:=\{u_b=bu-(b-1)\psi\}$. Moreover, $b^{-1}u_b+(1-b^{-1})\psi \leq u$ with equality on $E$. The pluripotential maximum principle thus gives 
	\[
	{\bf 1}_E (\theta+dd^c (b^{-1}u_b+(1-b^{-1})\psi))^n \leq {\bf 1}_E (\theta+dd^c u)^n.
	\] 
	Expanding the left-hand side and using positivity of $\theta+dd^c \psi$, we arrive at the result. 
\end{proof}

The following result can also be proved similarly. 
\begin{lemma}\label{lem: envelope two forms}
	Assume $0\leq \theta\leq \theta'$ are semipositive and big. Assume $u'$ is a bounded $\theta'$-psh function and let $u:=P_{\theta}(u')$. Then 
	\[
	(\theta+dd^c u)^n \leq {\bf 1}_{\{u=u'\}}(\theta'+dd^c u')^n. 
	\]
\end{lemma}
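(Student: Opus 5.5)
The plan follows the proof of Lemma \ref{lem: b trick}: first locate where the measure $(\theta+dd^c u)^n$ lives, then compare $u$ with $u'$ on that set.

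\emph{Step 1: support on the contact set.} Since $u'$ is bounded and quasi-psh, it is quasi-continuous and bounded from below. The projection theorem (the proposition on $P_\theta(h)$ stated above) then shows that $u=P_\theta(u')$ is a bounded $\theta$-psh function. It also shows that $(\theta+dd^c u)^n$ is concentrated on the contact set $E:=\{u=u'\}$. Hence
\[
(\theta+dd^c u)^n={\bf 1}_E(\theta+dd^c u)^n .
\]

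\emph{Step 2: a maximum principle for the larger form.} Because $\theta\le\theta'$, the function $u$ is also $\theta'$-psh. We have $u\le u'$ everywhere, with equality on $E$. Applying the pluripotential maximum principle (Proposition \ref{prop: max, comp, dom principle}) with the form $\theta'$, the functions $u'$ and $u$, and the set $\{u'=u\}$ gives
\[
{\bf 1}_E(\theta'+dd^c u)^n \le {\bf 1}_E(\theta'+dd^c \max(u,u'))^n ={\bf 1}_E(\theta'+dd^c u')^n .
\]
Here we used that $\max(u,u')=u'$, and we read the inequality with the roles of $u$ and $u'$ swapped relative to the statement there; the statement is symmetric in the two functions.

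\emph{Step 3: from $\theta'$ back to $\theta$.} Write $\theta'+dd^c u=(\theta+dd^c u)+(\theta'-\theta)$. Both summands are positive: the first as a current, the second as a smooth semipositive form. Expanding the $n$-th power, all mixed terms are positive measures, since mixed Bedford–Taylor products of positive closed-type currents with bounded potentials and smooth semipositive forms are positive. This gives $(\theta+dd^c u)^n\le(\theta'+dd^c u)^n$.

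Combining the three steps,
\[
(\theta+dd^c u)^n={\bf 1}_E(\theta+dd^c u)^n\le {\bf 1}_E(\theta'+dd^c u)^n\le {\bf 1}_E(\theta'+dd^c u')^n .
\]

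\emph{Expected obstacles.} The most delicate point is justifying Step 2 in the Hermitian setting, namely that the maximum principle holds for the form $\theta'$. The proposition is stated for a fixed semipositive big form, and $\theta'$ is such a form, so it applies directly with $\theta'$ in place of $\theta$. The positivity of the mixed terms in Step 3 is standard in the Hermitian Bedford–Taylor theory, where $dd^c$ of the currents need not be closed, but it is local and holds just as in the local theory.
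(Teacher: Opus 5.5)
Your proposal is correct and follows the route the paper intends when it says the lemma is proved ``similarly'' to Lemma \ref{lem: b trick}: the projection theorem localizes $(\theta+dd^c u)^n$ on $\{u=u'\}$, the maximum principle is applied with $\theta'$ (using that $u$ is also $\theta'$-psh), and expanding with $\theta'-\theta\ge 0$ gives $(\theta+dd^c u)^n\le(\theta'+dd^c u)^n$. One cosmetic point: no role-swapping is needed in Step 2, since the proposition applied directly to the pair $(u,u')$, with $\max(u,u')=u'$, already gives ${\bf 1}_E(\theta'+dd^c u)^n\le{\bf 1}_E(\theta'+dd^c u')^n$.
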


 \section{The K\"ahler case}
 In this section we prove our main result in the K\"ahler context. We first establish the a priori estimates in the simplest case when $\theta=\omega$ is K\"ahler and $\psi^{\pm}=0$, so that $\Omega=X$ and $D=\emptyset$.  

\begin{prop}\label{prop: Kahler case}
	Assume that $v$ is a smooth $\omega$-psh function such that $-C_0\leq v\leq C_0$, and $g : X\times [-C_0,C_0] \rightarrow \mathbb{R}$ is a smooth function.  Assume that $\varphi$ is a smooth $\omega$-psh function solving the Monge-Ampère equation
 \begin{equation} \label{eq:MAKahler}
	(\omega+dd^c \varphi)^n = e^{\varphi+ g(x,v)}\omega_X^n. 
 \end{equation} 
	Then,
	\[
	\sup_X |\varphi| + \sup_X \Delta_{\omega}(\varphi) \leq C_1,
	\]
	where $C_1$ is a constant depending on $C_0$, and an upper bound of  $\|g\|_{C^2(X\times [-C_0,C_0])} $. 
\end{prop}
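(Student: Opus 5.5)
The plan is to prove a $C^0$ bound by the maximum principle, and then a Laplacian bound by an Aubin--Yau/Siu type computation. The point is that the constants must not depend on derivatives of $v$. The only information we use about $v$ is $|v|\leq C_0$ and $\omega+dd^c v\geq 0$.

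\textbf{$C^0$ bound.} We rewrite \eqref{eq:MAKahler} as $(\omega+dd^c\varphi)^n=e^{F}\omega^n$ with
\[
F:=\varphi+g(x,v)+\log(\omega_X^n/\omega^n).
\]
At a maximum point of $\varphi$ we have $dd^c\varphi\leq 0$, so $e^{\varphi+g}\omega_X^n\leq\omega^n$ there. This gives $\sup_X\varphi\leq \sup_X|\log(\omega^n/\omega_X^n)|+\sup|g|$. At a minimum point we have $dd^c\varphi\geq 0$, and the reverse inequality gives the lower bound. Both bounds involve only $\sup|g|$ on $X\times[-C_0,C_0]$.

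\textbf{Laplacian bound: the differential inequality.} Set $\omega_\varphi:=\omega+dd^c\varphi$. The standard computation (for instance Siu's inequality, valid since $\omega$ is K\"ahler) gives
\[
\Delta_{\omega_\varphi}\log{\rm Tr}_\omega(\omega_\varphi)\geq \frac{\Delta_\omega F}{{\rm Tr}_\omega(\omega_\varphi)}-B\,{\rm Tr}_{\omega_\varphi}(\omega),
\]
where $B$ depends on the curvature of $\omega$. We then expand
\[
\Delta_\omega F=\Delta_\omega\varphi+(\Delta_x g)+2{\rm Re}\langle\partial_x g_t,\partial v\rangle+g_t\Delta_\omega v+g_{tt}|\partial v|^2_\omega+O(1).
\]
In this expansion $\Delta_\omega\varphi\geq -n$ and $\Delta_\omega v\geq -n$. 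The dangerous terms are $g_t\Delta_\omega v$ (when $g_t<0$) and the gradient terms in $v$. We handle them as follows.

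\begin{itemize}
\item We write $\Delta_\omega v={\rm Tr}_\omega(\omega+dd^c v)-n$. By Lemma \ref{lem: elementary inequalities},
\[
{\rm Tr}_\omega(\omega+dd^c v)\leq {\rm Tr}_\omega(\omega_\varphi)\,{\rm Tr}_{\omega_\varphi}(\omega+dd^cv)={\rm Tr}_\omega(\omega_\varphi)\big({\rm Tr}_{\omega_\varphi}(\omega)+\Delta_{\omega_\varphi}v\big).
\]
After dividing by ${\rm Tr}_\omega(\omega_\varphi)$, this term is bounded below by $-C\,{\rm Tr}_{\omega_\varphi}(\omega)-C\Delta_{\omega_\varphi}v$.
\item For the gradient terms, we use $|\partial v|^2_\omega\leq {\rm Tr}_\omega(\omega_\varphi)|\partial v|^2_{\omega_\varphi}$ together with Cauchy--Schwarz on the cross term. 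Their contribution divided by ${\rm Tr}_\omega(\omega_\varphi)$ is then at least $-C|\partial v|^2_{\omega_\varphi}-C\,{\rm Tr}_{\omega_\varphi}(\omega)-C$. Here we assume ${\rm Tr}_\omega(\omega_\varphi)\geq 1$ at the point considered, which we may.
\end{itemize}

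\textbf{Laplacian bound: the test function.} We consider
\[
H:=\log{\rm Tr}_\omega(\omega_\varphi)-A\varphi+Cv+C(v+2C_0)^2.
\]
Then
\[
\Delta_{\omega_\varphi}\big(Cv+C(v+2C_0)^2\big)=C(1+2(v+2C_0))\Delta_{\omega_\varphi}v+2C|\partial v|^2_{\omega_\varphi}.
\]
The term $2C|\partial v|^2_{\omega_\varphi}$ absorbs the gradient terms. The coefficient $C(1+2(v+2C_0))$ lies in $[C,\,C(1+6C_0)]$, and combined with $\Delta_{\omega_\varphi}v\geq-{\rm Tr}_{\omega_\varphi}(\omega)$ it absorbs $-C\Delta_{\omega_\varphi}v$ at the cost of another $-C'\,{\rm Tr}_{\omega_\varphi}(\omega)$. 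I expect this bookkeeping of the $v$-terms to be the main obstacle, and the constants may need adjusting, e.g.\ using $e^{Kv}$ instead of a quadratic.

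We also have $-A\Delta_{\omega_\varphi}\varphi=-An+A\,{\rm Tr}_{\omega_\varphi}(\omega)$. So at a maximum point of $H$,
\[
0\geq (A-C'')\,{\rm Tr}_{\omega_\varphi}(\omega)-An-C.
\]
Choosing $A$ large, this bounds ${\rm Tr}_{\omega_\varphi}(\omega)$ at that point. Lemma \ref{lem: elementary inequalities} then gives ${\rm Tr}_\omega(\omega_\varphi)\leq n\,e^F({\rm Tr}_{\omega_\varphi}\omega)^{n-1}$, which is bounded since $F$ is bounded by the $C^0$ step. Hence $H$ is bounded above at its maximum, and therefore everywhere. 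Since $\varphi$ and $v$ are bounded, $\Delta_\omega\varphi={\rm Tr}_\omega(\omega_\varphi)-n\leq C_1$.
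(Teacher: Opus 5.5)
Your proposal is correct and follows essentially the paper's route. You use the maximum principle for the $C^0$ bound, then the Aubin--Yau/Siu inequality applied to $\log{\rm Tr}_\omega(\omega_\varphi)$ plus a quadratic in $v$ minus $A\varphi$; the inequality ${\rm Tr}_\alpha(\beta)\,{\rm Tr}_\beta(\gamma)\geq{\rm Tr}_\alpha(\gamma)$ trades ${\rm Tr}_\omega(\omega_v)$ and $|\partial v|^2_\omega$ for ${\rm Tr}_{\omega_\varphi}(\omega_v)$ and $|\partial v|^2_{\omega_\varphi}$, which the quadratic absorbs. The bookkeeping you worried about already closes with the quadratic if its coefficient is taken larger than the constants bounding $|g_t|$ and the gradient terms, as in the paper's choice $A_1=A_3$, so no $e^{Kv}$ modification is needed.
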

\begin{proof}
	The $L^{\infty}$-bound of $\varphi$ follows from the classical maximum principle. 
	
	Indeed  if 
 $M := \max \{\vert g(x,t) ; \, (x,t) \in X \times [-C_0,+C_0]\}$ and  $K > 0$ is large enough so that $ e^{-K+M} \omega_X^n \leq \omega^n \leq  e^{K- M} \omega_X^n$, then $- K$ is a subsolution and $+K$ is a supersolution of the equation \eqref{eq:MAKahler}. 
	
	To bound the Laplacian of $\varphi$ we introduce the function
	\[
	H:= \log {\rm Tr}_{\omega}(\omega_{\varphi}) + A_1(v+C_0 +1)^2 -A_2 \varphi, 
	\]
	where $A_1,A_2$ are positive constants to be determined later. Since $H$ is smooth on $X$ compact, its maximum is attained at some $x_0\in X$. 
	By Yau's inequality (see \cite{Yau78}, \cite{Siu87} \cite{GZbook}), we have 
	\[
	\Delta_{\omega_{\varphi}} \log {\rm Tr}_{\omega}(\omega_{\varphi}) \geq \frac{{\rm Tr}_{\omega}(-{\rm Ric}(\omega_{\varphi}))}{{\rm Tr}_{\omega}(\omega_{\varphi})} - B {\rm Tr}_{\omega_{\varphi}}(\omega),
	\]
	where $B$ is a lower bound for the holomorphic bisectional curvature of $\omega$. From the Monge-Amp\`ere equation $\omega_{\varphi}^n= e^{\varphi+ g(x,v)}\omega^n$, we obtain 
	\[
	{\rm Ric}(\omega_{\varphi}) = {\rm Ric}(\omega) - dd^c \varphi - dd^c (g(\cdot,v)). 
	\]
	Let $A_3,A_4>0$ be constants such that 
	\[
	{\rm Ric}(\omega) \leq A_4 \omega \; \text{and}\; \|g\|_{C^2(X\times [-C_0,C_0])} \leq A_3. 
	\] 
	By the Cauchy-Schwarz inequality we thus have 
	\[
	\Delta_{\omega_{\varphi}} \log {\rm Tr}_{\omega}(\omega_{\varphi}) \geq \frac{-A_5+ {\rm Tr}_{\omega}(\omega_{\varphi})-2A_3 |\partial v|^2_{\omega} -A_3 {\rm Tr}_{\omega}(\omega_v) }{{\rm Tr}_{\omega}(\omega_{\varphi})} - B {\rm Tr}_{\omega_{\varphi}}(\omega),
	\]
	where $A_5=A_4+A_3+n$. 
	We next compute 
	\begin{flalign*}
		\Delta_{\omega_{\varphi}} (v+C_0+1)^2 & = 2(v+C_0+1) {\rm Tr}_{\omega_{\varphi}}(\omega_v-\omega)+ 2 {\rm Tr}_{\omega_{\varphi}}(dv \wedge d^c v)\\
		& \geq -(4C_0+2) {\rm Tr}_{\omega_{\varphi}}(\omega) + 2{\rm Tr}_{\omega_{\varphi}}(\omega_v) + 2 {\rm Tr}_{\omega_{\varphi}}(dv \wedge d^c v),
	\end{flalign*}
	and 
	\[
	{\Delta}_{\omega_{\varphi}}(-A_2 \varphi) = -A_2n + A_2  {\rm Tr}_{\omega_{\varphi}}(\omega). 
	\]
	From Lemma \ref{lem: elementary inequalities} we have  
	\[
	{\rm Tr}_{\omega_{\varphi}}(\omega_v) 	{\rm Tr}_{\omega}(\omega_{\varphi}) \geq 	n{\rm Tr}_{\omega}(\omega_v),
	\]
	 \[ 
	 {\rm Tr}_{\omega_{\varphi}}(dv \wedge d^c v) 	{\rm Tr}_{\omega}(\omega_{\varphi}) \geq 	{\rm Tr}_{\omega}(dv \wedge d^c v) = |\partial v|^2_{\omega},
	\]
	\[
	{\rm Tr}_{\omega}(\omega_{\varphi}) {\rm Tr}_{\omega_{\varphi}}(\omega) \geq n^2. 
	\]
	Summing up we obtain, at $x_0$, 
	\begin{flalign*}
		0 &\geq  \Delta_{\omega_{\varphi}}(H)  \\
	&	= \Delta_{\omega_{\varphi}} \log {\rm Tr}_{\omega}(\omega_{\varphi}) + A_1\Delta_{\omega_{\varphi}}(v+C_0+1)^2 - A_2 \Delta_{\omega_{\varphi}}(\varphi) \\
	& \geq  \frac{-A_5+ {\rm Tr}_{\omega}(\omega_{\varphi})-2A_3 |\partial v|^2_{\omega} -A_3 {\rm Tr}_{\omega}(\omega_v) }{{\rm Tr}_{\omega}(\omega_{\varphi})} - B {\rm Tr}_{\omega_{\varphi}}(\omega)\\
	& -A_1(4C_0+2) {\rm Tr}_{\omega_{\varphi}}(\omega) + 2A_1 {\rm Tr}_{\omega_{\varphi}}(\omega_v) + 2A_1 {\rm Tr}_{\omega_{\varphi}}(dv\wedge d^c v) \\
	& -nA_2 + A_2 {\rm Tr}_{\omega_{\varphi}}(\omega)\\ 
		& \geq   \frac{-2A_3 |\partial v|^2_{\omega}+ (2A_1-A_3) {\rm Tr}_{\omega}(\omega_v)  + 2A_1   {\rm Tr}_{\omega} (dv \wedge d^c v)}{{\rm Tr}_{\omega}(\omega_{\varphi})} \\
		&+ (A_2-B-(4C_0+2)A_1-A_5/n^2) {\rm Tr}_{\omega_{\varphi}}(\omega) -nA_2+1. 
	\end{flalign*}
	Now we choose $A_1=A_3$ and $A_2=1+B+ A_1(4C_0+2) +A_5/n^2$, so that 
	\[
	0\geq {\rm Tr}_{\omega_{\varphi}}(\omega)  + 1- nA_2. 
	\]
	From this and Lemma \ref{lem: elementary inequalities} we obtain, at $x_0$,  
\[
{\rm Tr}_{\omega}(\omega_{\varphi}) \leq n{\rm Tr}_{\omega_{\varphi}}(\omega)^{n-1} e^{\varphi+g} \leq nA_2^{n-1}e^{2A_0}.
\]
Thus, $H(x) \leq H(x_0) \leq \log (nA_2^{n-1}e^{2A_0}) + A_1(2C_0+1)^2 +A_2A_0=:A_7$, and therefore 
${\rm Tr}_{\omega}(\omega_{\varphi}) \leq e^{A_7+A_2A_0}=:A_8$. 
\end{proof}

\subsection{Bounded solutions are smooth} \label{sect: Kahler smooth}
We are now ready to prove the main theorem for the case when $\theta=\omega=\omega_X$ is K\"ahler and $\psi^{\pm}=0$. 

\begin{proof}[Proof of Theorem \ref{thm: bdd implies smooth intro} in the K\"ahler case]
Let us recall the setting. We assume that $\varphi$ is a bounded $\omega$-psh function solving 
\[
(\omega+dd^c \varphi)^n = e^{f(x,\varphi)} \omega^n,
\]
where $f$ is a smooth function on $X\times \mathbb R$. We want to prove that $\varphi$ is smooth on $X$. Using Demailly's approximation theorem, we can find a sequence of smooth $\omega$-psh functions $v_j$ decreasing pointwise to $\varphi$ on $X$. We can assume that $-C_0\leq v_j\leq C_0$ for all $j$. For each $j$ we solve the Monge-Amp\`ere equation (see \cite{Aub78,Yau78})
\[
(\omega+dd^c u_j)^n = e^{u_j + g(x,v_j)} \omega^n,
\]
where $g(x,t):= -t+ f(x,t)$ is a smooth function on $X\times \mathbb{R}$, and $u_j$ is smooth. It follows from Proposition \ref{prop: Kahler case} that the sequence $\left(\sup_X |u_j|+ \sup_X |\Delta_{\omega} u_j|\right)$ is uniformly bounded. Hence, up to extracting a subsequence, we can assume that the sequence $(u_j)$ converges uniformly to $u\in {\rm PSH}(X,\omega)$ solving the Monge-Amp\`ere equation
\[
(\omega+dd^c u)^n = e^{u+ g(x,\varphi)}\omega^n.
\]
Since $(\omega+dd^c \varphi)^n =e^{\varphi+ g(x,\varphi)}\omega^n$, the comparison principle (see Proposition \ref{prop: max, comp, dom principle}) yields $u=\varphi$. 

 We claim that the sequence $(\Delta_{\omega}v_j)$ is also uniformly bounded. To prove the claim we recall the approximating sequence of Demailly, see \cite[(3.1)]{Dem94}: $v_j= \rho_{1/j}(u)$, where 
\[
\rho_{\varepsilon}(u)(x):=\int_{\zeta \in T_x X} u({\rm exph}_x(\zeta))\chi\left ( \frac{|\zeta|^2}{\varepsilon^2}\right)d\lambda(\zeta).  
\] 
From $\Delta u_j \leq C$, we deduce $\Delta
\rho_{\varepsilon}(u_j) \leq C'$, and letting $j\to +\infty$, we obtain $\Delta\rho_{\varepsilon}(u) \leq C'$, proving the claim.  

 Now, it follows from \cite[Theorem 14.9]{GZbook} that there exists $\alpha \in ]0,1[$ such that the $C^{2,\alpha}$-norm of $u_j$ is also uniformly bounded, thus $u\in C^{2,\alpha'}(X)$, for any $\alpha'<\alpha$.  
By Schauder's estimates (see \cite[Proposition 14.12]{GZbook}), $u$ is in fact smooth on $X$.   
\end{proof}

\subsection{Degenerate K\"ahler settings}
 In this section we prove our main theorem for degenerate data on compact K\"ahler manifolds. Recall that  $\theta$ is a smooth closed real $(1,1)$-form that is big and semipositive. We fix $\rho$ with analytic singularities such that $\theta+dd^c \rho \geq 2\varepsilon_0\omega_X$ for a fixed constant $\varepsilon_0$. Let $D$ be an analytic subset of $X$ such that $\rho$ is smooth in $X\setminus D$.  We normalize $\rho$ so that $\sup_X \rho = -1$.  Let $\psi^{\pm}$ be $\theta$-psh functions which are smooth in an open set $\Omega$. Let us emphasize here that $\Omega$ is not necessarily a Zariski open set. 
 
 \begin{proof}[Proof of Theorem \ref{thm: bdd implies smooth intro} in the degenerate K\"ahler setting]
 
 	We assume that $\varphi$ is a bounded $\theta$-psh function solving \eqref{eq: CMAeq}, and we want to prove that  $\varphi$ is smooth in $\Omega\setminus D$. Using Demailly's regularization theorem, we let $v_j,\psi^+_j,\psi_j^-$ be sequences of $\theta_j$-psh functions decreasing to $\varphi,\psi^+,\psi^-$ (respectively), where $\theta_j=\theta+j^{-1}\omega_X$. We can assume that $\psi_j^{\pm} \leq 0$. For each $j$, we solve 
 	\begin{equation}\label{eq: MA approximant K deg}
 		 	(\theta+j^{-1}\omega_X +dd^c u_j)^n = e^{u_j + g(\cdot,v_j)+\psi_j^+-\psi_j^-} \omega_X^n, 
 	\end{equation}
  where we define $g(\cdot, t) :=-t+ f(\cdot,t)$. In the next lemmas we will prove that 
   \begin{enumerate}
  	\item there exists a uniform constant $A_1$ with 
 \[
  A_1 \geq u_j \geq \varepsilon \psi_j^--C(\varepsilon), \; \text{for all}\;\varepsilon>0;
  \] 
  	\item there exists a uniform constant $A_2$ with 
  	\[
  	\Delta_{\omega_X}(u_j) \leq e^{-A_2(\psi^-+\rho )}. 
  	\]
  \end{enumerate}
  Thus, up to extracting we can assume $u_j\to u \in {\rm PSH}(X,\theta)$ in $L^1(X)$. We claim that $u\geq \varphi$. To prove the claim we consider, for each $b>1$, $w_{b,j}:=P_{\theta_j}(bu_j-(b-1)\psi_j^-)$. It follows from Lemma \ref{lem: b trick} and Lemma \ref{lem: envelope two forms} that 
  \begin{flalign*}
  	  (\theta +dd^c w_{b,j})^n &\leq b^n e^{b^{-1} w_{b,j} + g(\cdot,v_j) + \psi_j^{+}-b^{-1}\psi_j^-} \omega_X^n\\
  	  &\leq   e^{b^{-1} w_{b,j} + g(\cdot,v_j) + \psi_j^{+}-\psi_j^-+n\log b} \omega_X^n
  \end{flalign*}
  We next define 
  \[
  \tilde{w}_{b,j} := P_{\theta}\left(\inf_{k\geq j} w_{b,j}\right). 
  \]
  By the $L^{\infty}$ estimate in (1) above with $\varepsilon =1-b^{-1}$, we know that $w_{b,j} \geq -C(b)$. With this observation in hand, $\tilde{w}_{b,j}$ is well-defined as a bounded $\theta$-psh function. Note that the constant $C(b)$ depends a priori on $b$ and it may blow up as $b\to 1$ but we will show that it is not the case.

  It follows from Proposition \ref{prop: min principle} that
  \[
 (\theta +dd^c  \tilde{w}_{b,j})^n \leq e^{b^{-1} \tilde{w}_{b,j} + \sup_{k\geq j}(g(\cdot,v_j) + \psi_j^{+}-\psi_j^-)+n\log b} \omega_X^n
  \]
The sequence $(\tilde{w}_{b,j})_j$ is increasing almost everywhere to some  $\tilde{w}_b\in {\rm PSH}(X,\theta)\cap L^{\infty}$. Letting $j\to +\infty$, we obtain 
 \[
 (\theta +dd^c  \tilde{w}_{b})^n \leq e^{b^{-1} \tilde{w}_{b} + g(\cdot,\varphi) + \psi^{+}-\psi^-+n\log b} \omega_X^n
  \]
Moreover, using the boundedness of $\varphi$, we can also write
 \[
 (\theta +dd^c  \varphi)^n \geq e^{b^{-1} \varphi + g(\cdot,\varphi) + \psi^{+}-\psi^--(1-b^{-1})\sup_X |\varphi|} \omega_X^n. 
  \]
  By the domination principle (see Proposition \ref{prop: max, comp, dom principle}) we have 
  \[
  \tilde{w}_b+nb\log b\geq \varphi -(b-1)\sup_X |\varphi|.
  \] 
  From the definition of $w_{b,j}$, we also have 
  \[
  bu_j-(b-1)\psi_j^- \geq w_{b,j} \geq \tilde{w}_{b,j}. 
  \]
  Letting $j\to +\infty$, we see that 
  \begin{flalign*}
  	 bu& \geq (b-1)\psi^-+ \tilde{w}_b \\
  	 & \geq (b-1)\psi^- +\varphi -nb\log b-(b-1)\sup_X |\varphi|. 
  	  \end{flalign*}
   Finally, letting $b\to 1$ we obtain $u\geq \varphi$. 
   
   From \eqref{eq: MA approximant K deg}, considering $(\sup_{k\geq j} u_k)^*$, using Proposition \ref{prop: max, comp, dom principle},  and letting $j\to +\infty$, we can show that 
   \[
   (\theta+dd^c u)^n \geq e^{u + g(\cdot,\varphi) + \psi^{+}-\psi^-} \omega_X^n. 
   \]
   It follows again from the domination principle Proposition \ref{prop: max, comp, dom principle} that $u\leq \varphi$, hence $u=\varphi$. 
   
From the Laplacian estimate in (2), we can argue as in Section \ref{sect: Kahler smooth}, using the Evans-Krylov theory, to prove that $\varphi = u$ is smooth in $\Omega\setminus D$. 
 \end{proof}
 
 We now give the proof of the a priori estimates used in the above proof. 
 
 \begin{lemma}\label{lem: relative Linfty estimate}
 	With the same notations as above, there exists a uniform constant $A_1 > 0$ such that  
for any $\varepsilon>0$ there exists a constant $C(\varepsilon) >0$ with 
 \[
  A_1 \geq u_j \geq \varepsilon \psi_j^--C(\varepsilon), \, \, \text{for all j }. 
  \] 
 \end{lemma}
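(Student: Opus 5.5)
Both bounds will come from the domination principle (Proposition \ref{prop: max, comp, dom principle}, item (1)), comparing $u_j$ with explicit sub- and supersolutions of \eqref{eq: MA approximant K deg}. The data are uniformly controlled: $-C_0\leq v_j\leq C_0$, $\psi_j^{\pm}\leq 0$, and $\theta\leq\theta_j\leq \theta+\omega_X$. Set $M:=\sup\{|g(x,t)| : x\in X,\ |t|\leq C_0\}$.

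\textbf{Upper bound.} I would use the mixed inequalities rather than the maximum principle, because $\theta_j$ degenerates. Since $\psi_j^+\leq 0$, the equation gives $(\theta_j+dd^c u_j)^n\leq e^{u_j+M-\psi_j^-}\omega_X^n$. Let $x_0$ be a point where $u_j$ attains its maximum. I expect the cleanest route is the following. Since $u_j$ is smooth, at $x_0$ we have $dd^c u_j\leq 0$, so $\theta_j+dd^c u_j\leq \theta+\omega_X\leq 2\omega_X$ there. Hence $e^{u_j(x_0)+g(x_0,v_j)+\psi_j^+(x_0)-\psi_j^-(x_0)}\leq 2^n$.

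The difficulty is that $\psi_j^+(x_0)$ may be very negative. That only helps an upper bound in the wrong direction. What we actually need is a lower bound on $\psi_j^+-\psi_j^-$ at $x_0$, and $-\psi_j^-\geq 0$ helps but $\psi_j^+$ does not.

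So I would instead bound $\sup_X u_j$ by integration. The mass of $(\theta_j+dd^c u_j)^n$ is bounded above by a uniform constant; in the Kähler case it equals $\int_X\theta_j^n\leq \int_X(\theta+\omega_X)^n$. This gives $\int_X e^{u_j+\psi_j^+}\omega_X^n\leq C$. The functions $u_j$ are $(\theta+\omega_X)$-psh and $\psi_j^+$ is $\theta_1$-psh, and both families are compact, so $\sup_X u_j\leq A_1$ follows from the standard compactness and uniform integrability of $e^{\psi_j^+}$. For the latter, note that $\psi_j^+\geq\psi^+$, which is a fixed function with $\int_X e^{\psi^+}>0$.

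\textbf{Lower bound.} Fix $\varepsilon\in(0,1)$. I would use the subsolution $\phi:=\varepsilon\psi_j^- +(1-\varepsilon)P_\theta(\ldots)$, where the second piece should provide a density bounded below. To arrange this, pick a bounded $\theta$-psh function $w$ with $(\theta+dd^c w)^n\geq c\,e^{\psi^+}\omega_X^n$. Such a $w$ exists because $e^{\psi^+}\in L^p$ for some $p>1$, as $\psi^+$ is quasi-psh with small Lelong numbers after scaling; if needed, replace $\psi^+$ by $\delta\psi^+$ and use $\psi_j^+\geq\psi^+$. Then, by multilinearity,
\[
(\theta_j+dd^c\phi)^n\geq (1-\varepsilon)^n(\theta+dd^c w)^n\geq (1-\varepsilon)^n c\, e^{\psi_j^+}\omega_X^n
\]
(after comparing $\psi^+$ with $\psi_j^+$ via the decreasing approximation, a step that needs care). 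Subtracting a large constant $C(\varepsilon)$ makes $(\theta_j+dd^c\phi)^n\geq e^{\phi+g+\psi_j^+-\psi_j^-}\omega_X^n$. This uses $e^{\phi}\leq e^{\varepsilon\psi_j^-}e^{-C}$ and $e^{-\psi_j^-}e^{\varepsilon\psi_j^-}\leq \ldots$; here the exponent $-(1-\varepsilon)\psi_j^-\geq 0$ is bad. To absorb it, I would replace $\varepsilon\psi_j^-$ by $\psi_j^-$ in the exponent, taking $\phi=\varepsilon\psi_j^-+(1-\varepsilon)w-C$ and using $(\theta+dd^c\psi_j^-)$ too. The cleaner choice is to solve for $w$ with density $e^{\psi^+-\psi^-}$, which is integrable (in $L^p$, being $\leq$ the solvability hypothesis). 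The domination principle then gives $u_j\geq\phi$.

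\textbf{Main obstacle.} The main obstacle is this lower bound: producing a bounded potential $w$ whose Monge--Ampère measure dominates $e^{\psi_j^+-\psi_j^-}$ uniformly in $j$. I would first try Kołodziej's $L^p$ theorem, noting that $e^{-\psi_j^-}$ is uniformly in $L^p$ when we use only an $\varepsilon$-fraction of $\psi_j^-$, which is exactly why the bound carries the term $\varepsilon\psi_j^-$.
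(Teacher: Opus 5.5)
\textbf{Upper bound.} Your upper bound is acceptable in the K\"ahler setting of the statement. The total mass $\int_X\theta_j^n$ is uniformly bounded, and for normalized $\phi$ Jensen's inequality gives $\int_X e^{\phi+\psi_j^+}\omega_X^n\ge\int_X e^{\phi+\psi^+}\omega_X^n\ge c>0$. That uniform lower bound, not uniform integrability, is what you actually use. This argument does not transfer to the Hermitian case, where the lemma is reused. The paper instead compares $u_j$, via the domination principle, with a bounded solution of $(\theta_1+dd^c h)^n=e^{h+\psi^+}\omega_X^n$, which works in both settings.

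\textbf{Lower bound: the gap.} Since the only lower bound you have is $(\theta_j+dd^c\phi)^n\ge(1-\varepsilon)^n(\theta+dd^c w)^n$, the function $\phi=\varepsilon\psi_j^-+(1-\varepsilon)w-C$ is a subsolution only if $(1-\varepsilon)^n(\theta+dd^c w)^n\ge e^{(1-\varepsilon)w-C+g(\cdot,v_j)}e^{\psi_j^+-(1-\varepsilon)\psi_j^-}\omega_X^n$. So $w$ must dominate the $j$-dependent density $e^{\psi_j^+-(1-\varepsilon)\psi_j^-}$. Your final choice, a fixed $w$ with density $e^{\psi^+-\psi^-}$, fails. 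Since $\psi_j^+\ge\psi^+$, near a pole of $\psi^+$ where $\psi^-$ is bounded, $e^{\psi^+-\psi^-}\to 0$ while the required density stays bounded below, and no constant $C$ repairs this. If you instead discard the factor $e^{\psi_j^+}\le 1$, you are left with $e^{-(1-\varepsilon)\psi_j^-}$, which is in general not integrable for small $\varepsilon$.

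Two further claims are wrong. First, $e^{\psi^+-\psi^-}\in L^p$ for some $p>1$ is not available here: the bounded solution $\varphi$ only gives $L^1$. Second, your closing heuristic is reversed. The part of $\psi_j^-$ that $w$ must absorb is the $(1-\varepsilon)$-fraction, nearly all of it when $\varepsilon$ is small. So integrability of $e^{-\delta\psi^-}$ for small $\delta$ does not help.

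\textbf{The missing idea.} Keep $\psi_j^+$ and exploit $\psi_j^+\le 0$. With $b=1/(1-\varepsilon)$ one has $e^{b(\psi_j^+-(1-\varepsilon)\psi_j^-)}=e^{b\psi_j^+-\psi_j^-}\le e^{\psi_j^+-\psi_j^-}$. Hence the density is uniformly in $L^b$, provided $\int_X e^{\psi_j^+-\psi_j^-}\omega_X^n$ is uniformly bounded. That last fact also needs proof, which you omit; the paper gets it from convexity of $\exp$ and Demailly's convolution, $e^{\psi_j^+-\psi_j^-}\le (e^{\psi^+-\psi^-})_j$.

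With this, your route can be completed. Solve $(\theta+dd^c w_j)^n=c_j e^{\psi_j^+-(1-\varepsilon)\psi_j^-}\omega_X^n$. Then $c_j$ and $c_j^{-1}$ are uniformly bounded, since the total integral of the density lies between $\int_X e^{\psi^+}\omega_X^n$ and the $L^b$ bound. Also $\|w_j\|_{L^\infty}$ is uniformly bounded by the EGZ/Ko{\l}odziej $L^p$ estimate, and the domination principle then finishes.

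The paper reaches the same $L^b$ bound through the envelope $w_{b,j}=P_{\theta_j}(bu_j-(b-1)\psi_j^-)$. Lemma \ref{lem: b trick} gives $(\theta+dd^c w_{b,j})^n\le b^n e^{b^{-1}w_{b,j}+g(\cdot,v_j)+\psi_j^+-b^{-1}\psi_j^-}\omega_X^n$, and \cite[Theorem 3.4]{GL23Crelle} yields $w_{b,j}\ge -C(b)$. Hence $u_j\ge (1-b^{-1})\psi_j^- - C(b)/b$.
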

 To avoid repetition we present a proof that is also valid in the Hermitian setting.  
  \begin{proof}
  	We let $h\in {\rm PSH}(X,\theta_1)\cap L^{\infty}(X)$ solves 
  	\[
  	(\theta_1+ dd^c h)^n =e^{h+\psi^+} \omega_X^n. 
  	\]
  	The existence of $h$ is well-known (see \cite{BBGZ13}, \cite{GZbook} for the K\"ahler case and \cite{Ng16AIM} for the Hermitian case). 
  	
  	Since $\psi_j^+-\psi_j^-+g(\cdot,v_j) \geq \psi^+-C$, for a uniform constant $C$, we infer from the comparison principle that $u_j\leq h+C\leq \sup_X h +C=:A_1$.  To estbalish the lower bound, we consider the envelope 
  \[
  w_{b,j}:= P_{\theta_j}(bu_j-(b-1)\psi_j^-), b>1. 
  \]
  It follows from Lemma \ref{lem: b trick} that 
  \[
  (\theta+dd^c w_{b,j})^n \leq b^n e^{b^{-1} w_{b,j} + g(\cdot,v_j) + \psi_j^+-b^{-1}\psi_j^-} \omega_X^n. 
  \]
  Since $\varphi$ is bounded, we have that $e^{\psi^+-\psi^-}\in L^1(X,\omega_X^n)$. From the definition of the sequence $\psi_j$ and the convexity of $\exp$, we have that $e^{\psi^+_j-\psi^-_j}\leq (e^{\psi^+-\psi^-})_j$, and the latter converges in $L^1$ to $e^{\psi^+-\psi^-}$. The sequence $(e^{g(\cdot,v_j) + \psi_j^+-b^{-1}\psi_j^-})_j$ is bounded in $L^{b}(X)$ since 
  \[
   e^{b (g(\cdot,v_j) +  \psi_j^+-b^{-1}\psi_j^-)} \leq e^{C + \psi^+_j-\psi^-_j}.
  \]
  It thus follows from \cite[Theorem 3.4]{GL23Crelle} that $w_{b,j}\geq -C(b)$.  From the definition of $w_{b,j}$, we obtain $bu_j\geq (b-1)\psi_j^- -C(b)$, and the estimate follows with $\varepsilon=1-b^{-1}$. 
  \end{proof}
 
 \begin{prop}
	Assume $\omega$ is K\"ahler, $\theta \leq \omega\leq \omega_X$, $v$ is a smooth $\theta$-psh function, $-C_0\leq v\leq C_0$, $\psi^{\pm}$ are smooth $\omega$-psh functions on $X$  and $g : X \times [-C_0,C_0] \rightarrow \mathbb{R}$ is a smooth function.  Assume $\varphi$ is a smooth $\omega$-psh function solving 
\[
(\omega+dd^c \varphi)^n = e^{\varphi+g(x,v) + \psi^+-\psi^-}\omega_X^n. 
\] 
Then, for a uniform constant $C_1$, we have 
\[
\sup_{X } \Delta_{\omega_X} \varphi  \leq e^{-C_1 (\psi^-+\rho-1)}. 
\]
 \end{prop}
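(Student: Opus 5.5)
We run a Yau--Aubin type maximum principle argument, modelled on the proof of Proposition \ref{prop: Kahler case}. The new ingredient is a weight $\psi^-+\rho$ that absorbs both the degeneracy of $\theta$ along $D$ and the singular right-hand side. Set $\omega_\varphi:=\omega+dd^c\varphi$ and consider
\[
H:=\log {\rm Tr}_{\omega_X}(\omega_\varphi) + A_1(v+C_0+1)^2 + C_1(\psi^-+\rho) - A_2\varphi,
\]
with $A_1,A_2,C_1$ to be fixed. Since $\rho\to-\infty$ along $D$, $H$ attains its maximum at some $x_0\in X\setminus D$, where $\Delta_{\omega_\varphi}H\le 0$. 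The uniform bound $|\varphi|\le A_1'$ from Lemma \ref{lem: relative Linfty estimate} will be used at the end. Because $\psi^\pm\le 0$ are only $\omega$-psh, we use that lemma in the weak form $\varphi\ge \varepsilon\psi^- - C(\varepsilon)$, together with the upper bound $\varphi\le A_1$.

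\textbf{Curvature term.} Yau's inequality (with $\omega_X$ as reference, $\omega_X$ Kähler here) gives
\[
\Delta_{\omega_\varphi}\log{\rm Tr}_{\omega_X}\omega_\varphi \ge \frac{{\rm Tr}_{\omega_X}(-{\rm Ric}\,\omega_\varphi)}{{\rm Tr}_{\omega_X}\omega_\varphi} - B\,{\rm Tr}_{\omega_\varphi}\omega_X .
\]
Taking $\log$ of the equation yields
\[
-{\rm Ric}\,\omega_\varphi = -{\rm Ric}\,\omega_X + dd^c\varphi + dd^c g(\cdot,v) + dd^c\psi^+ - dd^c\psi^- .
\]
We estimate each term:
\begin{itemize}
\item $dd^c\psi^+\ge-\omega\ge -\omega_X$ is harmless.
\item $dd^c\varphi = \omega_\varphi-\omega$ contributes $+1$ minus a constant.
\item $dd^c g(\cdot,v)$ is handled exactly as in Proposition \ref{prop: Kahler case}: the terms $A_1\Delta_{\omega_\varphi}(v+C_0+1)^2$ and Lemma \ref{lem: elementary inequalities} absorb $|\partial v|^2$ and ${\rm Tr}(\theta+dd^c v)$.
\item $-dd^c\psi^-$ is the bad term. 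We write $-dd^c\psi^- = (\omega-\omega_{\psi^-})$ with $\omega_{\psi^-}:=\omega+dd^c\psi^-\ge0$. Its contribution $-{\rm Tr}_{\omega_X}(\omega_{\psi^-})/{\rm Tr}_{\omega_X}\omega_\varphi$ is dominated, via ${\rm Tr}_{\omega_X}(\omega_\varphi){\rm Tr}_{\omega_\varphi}(\omega_{\psi^-})\ge {\rm Tr}_{\omega_X}\omega_{\psi^-}$, by $-{\rm Tr}_{\omega_\varphi}(\omega_{\psi^-})$.
\end{itemize}
That last term is cancelled by $C_1\Delta_{\omega_\varphi}\psi^- = C_1{\rm Tr}_{\omega_\varphi}(\omega_{\psi^-}) - C_1{\rm Tr}_{\omega_\varphi}\omega$ as soon as $C_1\ge1$.

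\textbf{Positivity from $\rho$.} Next, $C_1\Delta_{\omega_\varphi}\rho\ge C_1(2\varepsilon_0{\rm Tr}_{\omega_\varphi}\omega_X - {\rm Tr}_{\omega_\varphi}\theta)$. Together with $-A_2\Delta_{\omega_\varphi}\varphi = -nA_2 + A_2{\rm Tr}_{\omega_\varphi}\omega$, this compensates the negative multiples of ${\rm Tr}_{\omega_\varphi}\omega_X$. The delicate point is that $\omega$ may be only comparable to $\theta$, not to $\omega_X$, uniformly. We therefore pick $C_1$ large so that $C_1\varepsilon_0$ beats $B+1+A_1(4C_0+2)$, using $\theta\le\omega$ so that $-C_1{\rm Tr}\theta+A_2{\rm Tr}\omega\ge0$ when $A_2\ge C_1+1$. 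This gives at $x_0$
\[
{\rm Tr}_{\omega_\varphi}\omega_X\le C .
\]
Lemma \ref{lem: elementary inequalities} then yields
\[
{\rm Tr}_{\omega_X}\omega_\varphi\le n\,C^{n-1}\frac{\omega_\varphi^n}{\omega_X^n} = nC^{n-1}e^{\varphi+g+\psi^+-\psi^-}\le C'e^{-\psi^-(x_0)} .
\]

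\textbf{Conclusion and main obstacle.} Hence
\[
H(x_0)\le C'' - \psi^-(x_0) + C_1(\psi^-+\rho)(x_0) - A_2\varphi(x_0) .
\]
The main obstacle is controlling $-A_2\varphi(x_0)$. We use $\varphi\ge \varepsilon\psi^- - C(\varepsilon)$ with $\varepsilon=1/A_2$, which bounds it by $-\psi^-(x_0)+C$. Since $C_1\ge 3$, the quantity $(C_1-2)\psi^-(x_0)+C_1\rho(x_0)$ is $\le0$, so $H(x_0)\le C$ uniformly. Finally, at any $x$, we get ${\rm Tr}_{\omega_X}\omega_\varphi\le \exp(C - C_1(\psi^-+\rho) + A_2A_1)$, using $\varphi\le A_1$. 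Since $\Delta_{\omega_X}\varphi\le{\rm Tr}_{\omega_X}\omega_\varphi$ and $\rho\le -1$, enlarging $C_1$ gives $\Delta_{\omega_X}\varphi\le e^{-C_1(\psi^-+\rho-1)}$.
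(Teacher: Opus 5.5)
This is essentially the paper's proof. Both use the same Yau-type maximum principle for a test function built from $\log{\rm Tr}_{\omega_X}(\omega_\varphi)$, $(v+C_0+1)^2$, $\varphi$, $\rho$ and $\psi^-$. Both absorb ${\rm Tr}_{\omega_X}(\omega_{\psi^-})$ with the trace inequality, and both control $-A_2\varphi(x_0)$ with the relative bound $\varphi\ge\varepsilon\psi^--C(\varepsilon)$. The paper simply ties the weights together as $-A_2(\varphi-\rho-\varepsilon_0\psi^-)$, so that $dd^c(\varphi-\rho-\varepsilon_0\psi^-)\le\omega_\varphi-\varepsilon_0\omega_X-\varepsilon_0\omega_{\psi^-}$ disposes of all leftover $\theta,\omega$ terms at once, and it takes $\varepsilon=\varepsilon_0/2$ at the end.

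There is one bookkeeping slip in your version. The term $C_1\Delta_{\omega_\varphi}\psi^-$ leaves $-C_1{\rm Tr}_{\omega_\varphi}(\omega)$, in addition to the $-C_1{\rm Tr}_{\omega_\varphi}(\theta)$ coming from $\rho$. So you need $A_2\ge 2C_1$, not $A_2\ge C_1+1$. Since $\varepsilon=1/A_2$ is chosen afterwards, nothing else in your argument is affected.
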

  
 \begin{proof}
 	We introduce the following function
\[
H(x):= \log {\rm Tr}_{\omega_X}(\omega_{\varphi}) + A_1(v+C_0 +1)^2 -A_2 \Big(\varphi- \rho - \varepsilon_0\psi^-\Big), 
\]
where $A_1,A_2$ are positive constants to be determined later.  Since there exists $C_1$ such that $\varepsilon_0\psi^--C_1 \leq \varphi \leq C_1 \, \, \, \textnormal{in} \, X $, $H$ is a smooth function in $X\setminus D$  and its maximum is attained at some $x_0\in X\setminus D$. 	By Yau's inequality we have 
	\[
	\Delta_{\omega_{\varphi}} \log {\rm Tr}_{\omega_X}(\omega_{\varphi}) \geq \frac{{\rm Tr}_{\omega_X}(-{\rm Ric}(\omega_{\varphi}))}{{\rm Tr}_{\omega_X}(\omega_{\varphi})} - B {\rm Tr}_{\omega_{\varphi}}(\omega_X),
	\]
	where $B$ is a lower bound for the holomorphic bisectional curvature of $\omega_X$. From the Monge-Amp\`ere equation $\omega_{\varphi}^n= e^{\varphi+g(x,v)+ \psi^+-\psi^-}\omega_X^n$, we obtain 
\[
{\rm Ric}(\omega_{\varphi}) = {\rm Ric}(\omega_X) - dd^c \varphi - dd^c g(x,v)-dd^c \psi^+ + dd^c \psi^-.
\]
By the Cauchy-Schwarz inequality we thus have 
\begin{flalign*}
\Delta_{\omega_{\varphi}} \log {\rm Tr}_{\omega_X}(\omega_{\varphi}) &\geq \frac{-A_3+ {\rm Tr}_{\omega_X}(\omega_{\varphi})-A_3 |\partial  v|^2_{\omega_X} -A_3 {\rm Tr}_{\omega_X}(\omega_v)- {\rm Tr}_{\omega_X}(\omega_{\psi^-}) }{{\rm Tr}_{\omega_X}(\omega_{\varphi})} \\
&- B {\rm Tr}_{\omega_{\varphi}}(\omega_X),
\end{flalign*}
where $A_3$ depends on the $C^2$-norm of $g$. We next compute 
	\begin{flalign*}
		\Delta_{\omega_{\varphi}} (v+C_0+1)^2 & = 2(v+C_0+1) {\rm Tr}_{\omega_{\varphi}}(\omega_v-\omega)+ 2 {\rm Tr}_{\omega_{\varphi}}(dv \wedge d^c v)\\
		& \geq -(8C_0+4) {\rm Tr}_{\omega_{\varphi}}(\omega_X) + 2{\rm Tr}_{\omega_{\varphi}}(\omega_v) + 2 {\rm Tr}_{\omega_{\varphi}}(dv \wedge d^c v). 
	\end{flalign*}
    We have 
     \begin{flalign*}
      -A_2\Delta_{\omega_{\varphi}}\Big(\varphi-\rho-\varepsilon_0\psi^- \Big)&=-A_2{\rm Tr}_{\omega_{\varphi}} \Big( \omega_{\varphi}-\varepsilon_0\omega_X-\varepsilon_0\omega_{\psi^-} \Big )\\& \geq  -A_2n + A_2\varepsilon_0  {\rm Tr}_{\omega_{\varphi}}(\omega_{\psi^-}) +A_2\varepsilon_0 {\rm Tr}_{\omega_{\varphi}}(\omega_X). 
       \end{flalign*}
       Note also that 
 	\[
	{\rm Tr}_{\omega_{\varphi}}(\omega_v) 	{\rm Tr}_{\omega_X}(\omega_{\varphi}) \geq 	n{\rm Tr}_{\omega_X}(\omega_v),
	\]
	 \[ 
	 {\rm Tr}_{\omega_{\varphi}}(dv \wedge d^c v) 	{\rm Tr}_{\omega_X}(\omega_{\varphi}) \geq 	{\rm Tr}_{\omega_X}(dv \wedge d^c v) = |\partial v|^2_{\omega_X},
	\]
	\[
	{\rm Tr}_{\omega_{\varphi}}(\omega_{\psi^-}) {\rm Tr}_{\omega_X}(\omega_{\varphi}) \geq {\rm Tr}_{\omega_X}(\omega_{\psi^-}). 
	\]
	Summing up we obtain, at $x_0$, 
	\begin{flalign*}
		0 \geq \Delta_{\omega_{\varphi}}(H) & \geq \left(A_2\varepsilon_0-A_1(4C_0+2)-B-A_3\right)  {\rm Tr}_{\omega_{\varphi}}(\omega_X) -nA_2 \\
		&+ \frac{(2A_1-A_3)|\partial v|_{\omega_X}^2}{{\rm Tr}_{\omega_X}(\omega_{\varphi})}  + \left(2A_1-A_3\right) \frac{{\rm Tr}_{\omega}(\omega_v)}{{\rm Tr}_{\omega}(\omega_{\varphi})}\\
		&+\left(A_2\varepsilon_0-1\right) \frac{{\rm Tr}_{\omega_X}(\omega_{\psi^-})}{{\rm Tr}_{\omega_X}(\omega_{\varphi})}. 
	\end{flalign*}
	Now we choose $A_1=\frac{A_3}{2}$  so that 
	\begin{flalign*}
	0 
		& \geq \left(A_2\varepsilon_0-A_1(4C_0+2)-B-A_3\right)  {\rm Tr}_{\omega_{\varphi}}(\omega_X) -nA_2 \\
				&+\left(A_2\varepsilon_0 -1\right) \frac{{\rm Tr}_{\omega_X}(\omega_{\psi^-})}{{\rm Tr}_{\omega_X}(\omega_{\varphi})},		\end{flalign*}
	and hence, by choosing $A_2= \varepsilon_0^{-1} (A_1(4C_0+2)+B+A_3+2)\geq 2\varepsilon_0^{-1}$, we arrive at
	\[
	0\geq {\rm Tr}_{\omega_{\varphi}}(\omega_X)  - nA_2. 
	\]
		From this and Lemma \ref{lem: elementary inequalities} we obtain, at $x_0$,  
\[
{\rm Tr}_{\omega_X}(\omega_{\varphi}) \leq n{\rm Tr}_{\omega_{\varphi}}(\omega_X)^{n-1} e^{\varphi+g+\psi^+-\psi-} \leq n(nA_2)^{n-1}e^{A_0+C_1-\psi^-},
\]
where $A_0:=\sup_{X\times [-C_0,C_0]} |g|$. 
Thus
\begin{flalign*}
	H(x) &\leq H(x_0) \\
	&\leq \log (n^nA_2^{n-1}e^{A_0+C_1-\psi^-(x_0)}) + A_1(2C_0+1)^2 -A_2(\varphi-\varepsilon_0 \psi^-)(x_0)\\
	& \leq C_2 +\left(\frac{A_2\varepsilon_0}{2} -1\right)\psi^-(x_0). 
\end{flalign*}
and therefore, using $A_2\varepsilon_0 \geq 2$, we arrive at $H(x)\leq C_2$. Hence 
${\rm Tr}_{\omega_X}(\omega_{\varphi}) \leq e^{-A_2(\psi^- +\rho-1)}$. 
 \end{proof}
 
It is natural to expect that Theorem \ref{thm: bdd implies smooth intro} is still valid if we merely assume that the solution $\varphi$ is in the full mass class $\mathcal E(X,\omega)$ in the K\"ahler case, as defined in \cite{GZ07}.  
\begin{coro}
 Assume $\varphi\in \mathcal E(X,\omega)$ solves 
 \[
 (\omega+dd^c \varphi)^n = e^{f(\cdot,\varphi) + \psi^+ - \psi^-}\omega_X^n,
 \]
 where $f$ is smooth in $X$ and quasi-increasing in the second variable. If $e^{\psi^+-\psi^-}\in L^p(X,\omega_X^n)$ for some $p >1$, then $\varphi \in L^{\infty} (X)$ and smooth in $\Omega$. 
\end{coro}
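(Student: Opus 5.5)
The plan is to reduce to Theorem \ref{thm: bdd implies smooth intro}: once $\varphi$ is shown to be bounded, the Kähler-degenerate case already proved gives smoothness in $\Omega$ (here $\theta=\omega$ is Kähler, so $D=\emptyset$). The whole task is therefore to prove $\varphi\in L^\infty(X)$. We use the quasi-monotonicity of $f$: there is $\lambda\ge 0$ such that $t\mapsto f(x,t)+\lambda t$ is non-decreasing.

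\textbf{Upper bound.} Since $\varphi$ is quasi-psh, $\sup_X\varphi<+\infty$. So only a lower bound is needed. Normalizing, we may assume $\varphi\le 0$, so that $f(\cdot,\varphi)\ge f(\cdot,0)-\lambda|\varphi|$ is not immediately useful. Instead we bound the right-hand side from above on the sublevel sets of $\varphi$. On $\{\varphi\le 0\}$, monotonicity of $f(\cdot,t)+\lambda t$ gives
\[
f(\cdot,\varphi)\le f(\cdot,0)-\lambda\varphi\le C+\lambda|\varphi|.
\]
Hence
\[
(\omega+dd^c\varphi)^n\le e^{C+\lambda|\varphi|}e^{\psi^+-\psi^-}\omega_X^n .
\]

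\textbf{Removing the exponential factor.} The key step, and the main obstacle, is the factor $e^{\lambda|\varphi|}$, which is unbounded a priori. I would handle it with a comparison against an auxiliary bounded solution. Let $h$ be the bounded $\omega$-psh solution of
\[
(\omega+dd^c h)^n=e^{\lambda h+C+\psi^+-\psi^-}\omega_X^n .
\]
It exists and is bounded by Kołodziej's theorem, since the density is in $L^p$ with $p>1$ and the right-hand side is increasing in $h$ (existence as in \cite{GZbook}, using $e^{\psi^+-\psi^-}\in L^p$). Rewriting the inequality above as
\[
(\omega+dd^c\varphi)^n\le e^{-\lambda\varphi+C+\psi^+-\psi^-}\omega_X^n
\]
has the wrong sign for a direct comparison. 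So instead I would use the $L^p$ condition through the standard capacity argument in the class $\mathcal E$. Since $\varphi\in\mathcal E(X,\omega)$, by \cite{GZ07} we have $e^{-\lambda'\varphi}\in L^q$ for all $q$ (zero Lelong numbers), for every $\lambda'$. Hölder's inequality then gives that $e^{\lambda|\varphi|}e^{\psi^+-\psi^-}\in L^{p'}$ for some $p'>1$. Thus $(\omega+dd^c\varphi)^n$ has $L^{p'}$ density, and Kołodziej's uniform estimate, valid for solutions in $\mathcal E$ by uniqueness (Dinew's theorem), yields $\varphi\in L^\infty(X)$.

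\textbf{Conclusion.} With boundedness established, the hypotheses of Theorem \ref{thm: bdd implies smooth intro} hold, so $\varphi$ is smooth in $\Omega$. The delicate point is justifying that the zero-Lelong-number integrability of full-mass functions applies to $\varphi$. I would check it via Skoda's theorem, since functions in $\mathcal E(X,\omega)$ have zero Lelong numbers everywhere. I would also need that Kołodziej's estimate applies to $\mathcal E$-solutions with $L^{p'}$ densities: the bounded solution with the same measure coincides with $\varphi$ up to a constant.
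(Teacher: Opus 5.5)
Your proposal is correct and follows essentially the paper's argument. Quasi-monotonicity bounds the density by $e^{C-\lambda\varphi+\psi^+-\psi^-}$; Skoda's theorem (zero Lelong numbers of $\mathcal E$-functions) plus H\"older puts it in $L^{p'}$ for some $p'>1$; Ko{\l}odziej's estimate then gives $\varphi\in L^\infty$, and Theorem~\ref{thm: bdd implies smooth intro} gives smoothness. The abandoned comparison with the auxiliary solution $h$ can simply be deleted. Your appeal to Dinew's uniqueness in $\mathcal E(X,\omega)$ makes explicit a step the paper leaves implicit when it invokes Ko{\l}odziej.
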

Quasi-increasing here means that there exists a constant $A>0$ such that  $g(\cdot,t) := f(\cdot,t) + At$ is increasing in $t$. When $\psi^+=0$, the assumption that $e^{-\psi^-}\in L^1(X)$ already implies $e^{-\psi^-}\in L^p(X)$, for some $p>1$, as follows from the resolution of the openness conjecture (see \cite{Bern15}, \cite{GZ15}, \cite{Hiep14}). 
\begin{proof}
	We can rewrite the above equation as 
	\[
	(\omega+dd^c \varphi)^n = e^{-A \varphi +g(\cdot,\varphi) + \psi^+ - \psi^-}\omega_X^n. 
	\]
Then  
 \[
 e^{-A \varphi +g(\cdot,\varphi) + \psi^+ - \psi^-}\leq e^{-A\varphi+ \sup_Xg(x,\sup_X \varphi) +\psi^+ - \psi^-}.
 \] 
 Since $e^{\psi^+-\psi^-} \in L^p(X)$ with $p > 1$, applying H\"older inequality with the exponent $p' := \sqrt{p} > 1$ and its conjugate exponent $q'$, we obtain
$$
\int_X e^{- p' (A \varphi +\psi^+- \psi^-) } \omega_X^n \leq \left(\int_X e^{- p(\psi^+- \psi^-)} \omega_X^n\right)^{1 \slash p'} \left( \int_X e^{- A p' q' \varphi } \omega_X^n\right)^{1 \slash q'}.
$$
The second term in the right hand side is finite  by Skoda's theorem \cite{Sko72} since the Lelong numbers of  $\varphi \in \mathcal E(X,\omega)$ vanishes (see \cite{GZ07}). Hence $\varphi$ is bounded in $X$ by Ko{\l}odziej \cite{Kol98}. Therefore, our main result applies giving the smoothness of $\varphi$ in $\Omega$.
\end{proof}

 \begin{remark}
 However, the conclusion in the above corollary is not true if we merely assume that $f$ is smooth. 
	
 Consider a holomorphic section $s$ of a holomorphic line bundle $L$ over $X$ and $\varphi= -2\log (-\log |s|^2)$ (one can take $X=\mathbb CP^n$). We can normalize the hermitian metric  on $L$ such that $|s|<<1$. Then 
	\[
	(\omega+dd^c \varphi)^n=e^{f(\cdot, \varphi)}\omega_X^n,
	\]
	where $f(\cdot,t)=h(x)+ 2t+e^{-t/2}$ for some smooth function $h$. Observe that $f$ is not quasi-increasing since $df/dt=2-e^{-t}$. 
 \end{remark}

 \section{The Hermitian case}\label{sect: Herm case}
Fix a compact hermitian manifold $(X,\omega_X)$ of dimension $n$, and let $\theta$ be a semipositive, smooth, real $(1,1)$-form that is big. By big we mean, following the terminology of \cite{GL23Crelle}, there exists $\rho \in {\rm PSH}(X,\theta)$, $\sup_X \rho =-1$, with analytic singularities along an analytic set $D\subset X$ such that $\theta+dd^c \rho \geq 2\varepsilon_0 \omega_X$, for some $\varepsilon_0>0$ that will be fixed throughout this section. To ease the notation we also assume that $2\theta\leq \omega_X$, so that when approximating $\theta$, we have $\theta \leq \theta+j^{-1}\omega_X \leq \omega_X$, for $j\geq 2$. We are studying solutions to the following complex Monge-Amp\`ere equation
\begin{equation}\label{eq: CMAE big herm}
    (\theta+dd^c \varphi)^n = e^{f(\cdot,\varphi) +\psi} \omega_X^n,
\end{equation}
where $f: X\times \mathbb{R} \rightarrow \mathbb{R}$ is a smooth function, $\psi=\psi^+-\psi^-$, with $\psi^{\pm}\in {\rm PSH}(X,\theta)$ smooth in an open set $\Omega\subset X$. The goal of this section is to prove Theorem \ref{thm: bdd implies smooth intro} in this general setting.
The existence of a bounded solution implicitly implies that $e^{\psi^+-\psi^-}\in L^1(X,\omega_X^n)$. 
As in the K\"ahler case, we prove our theorem by solving an appropriate family of Monge-Amp\`ere equations, and establishing a priori estimates. For each $j\geq 2$, we solve 
\[
(\theta_j+ dd^c u_j)^n = e^{u_j+g(\cdot,v_j)+\psi^+_j-\psi^-_j}\omega_X^n,
\]  
where $\theta_j=\theta+j^{-1}\omega_X$, $g(\cdot,t)=-t+f(\cdot,t)$, $v_j,\psi^+_j$, $\psi^-_j$ are smooth $\theta_j$-psh and decrease to $\varphi$, $\psi^+$, $\psi^-$ respectively. In the next subsection we establish uniform estimates for $u_j$. For notational convenience we will write $\omega$ for $\theta_j$ and drop the subscript $j$. 

\subsection{A priori estimates}

\begin{prop}\label{prop: estimate Hermitian}
	Assume the following 
	\begin{enumerate}
		\item $\omega$ is a hermitian form satisfying $\theta \leq \omega\leq \omega_X$, 
		\item  $v$ is a smooth $\omega$-psh function with $-C_0\leq v\leq C_0$,
		\item $\psi^{\pm}$ are smooth $\omega$-psh functions on $X$ with $\int_X e^{\psi^+-\psi^-}\omega_X^n\leq C_1$, 
		\item $g : X \times [-C_0,C_0] \rightarrow \mathbb{R}$ is a smooth function and set $C_2=\|g\|_{C^2(X\times [-C_0,C_0])}$.
	\end{enumerate}
	 Let $\varphi$ be the unique smooth $\omega$-psh function solving 
\[
(\omega+dd^c \varphi)^n = e^{\varphi+g(x,v) + \psi^+-\psi^-}\omega_X^n. 
\] 
Then for any $ \varepsilon>0$ there exists a constant $C(\varepsilon)>0$ such that 
\[
\varphi \geq \varepsilon \psi^- - C(\varepsilon),
\]
and
\[
\sup_{X } \Delta_{\omega_X} \varphi  \leq e^{-C (\psi^-+\rho)},
\]
where $C$ depends on $C_0$, $C_1$, $C_2$, $\varepsilon_0$, $X,\omega_X$, $n$. 
\end{prop}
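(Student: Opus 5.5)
The plan has two parts: a relative $L^\infty$ bound, obtained exactly as in Lemma \ref{lem: relative Linfty estimate}, and a Laplacian estimate, obtained by adapting the maximum principle argument of the degenerate Kähler proposition to a non-Kähler $\omega$.

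\textbf{Relative lower bound.} For $b>1$ we consider the envelope $w_b:=P_{\omega}(b\varphi-(b-1)\psi^-)$. By Lemma \ref{lem: b trick},
\[
(\omega+dd^c w_b)^n\leq b^n e^{b^{-1}w_b+g(\cdot,v)+\psi^+-b^{-1}\psi^-}\omega_X^n .
\]
On the contact set we may replace $b^{-1}w_b$ by $\varphi-(1-b^{-1})\psi^-$, and the upper bound $\varphi\le A_1$ follows from comparison with $h$ as in Lemma \ref{lem: relative Linfty estimate}. The density is then bounded in $L^b$, since
\[
e^{b(\psi^+-b^{-1}\psi^-)}\le e^{\psi^+-\psi^-}
\]
because $\psi^{\pm}\le 0$ and $b\psi^+\le\psi^+$. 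Its $L^b$ norm is therefore controlled by $C_1$, $C_2$, $C_0$. The uniform $L^\infty$ estimate of \cite[Theorem 3.4]{GL23Crelle} for Hermitian forms $\theta\le\omega\le\omega_X$ with $\theta$ big then gives $w_b\ge -C(b)$. Since $b\varphi-(b-1)\psi^-\ge w_b$, we get $\varphi\ge \varepsilon\psi^- -C(\varepsilon)$ with $\varepsilon=1-b^{-1}$.

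\textbf{Laplacian estimate.} We run the maximum principle on
\[
H=\log{\rm Tr}_{\omega_X}(\omega_\varphi)+A_1(v+C_0+1)^2-A_2(\varphi-\rho-\varepsilon_0\psi^-).
\]
The relative $L^\infty$ bound, taken with $\varepsilon=\varepsilon_0/2$, shows that $H\to-\infty$ near $D$ and near the poles of $\psi^-$, so $H$ attains its maximum at some $x_0\in X\setminus D$.

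In the Hermitian setting Yau's inequality must be replaced by the Guan--Li / Tosatti--Weinkove computation (see \cite{TW10}). This computation produces extra torsion terms:
\begin{itemize}
\item a term $-C\,{\rm Tr}_{\omega_\varphi}(\omega_X)$, which is harmless;
\item terms involving $d\omega$, $d\omega_X$, which are bounded because $\theta\le\omega\le\omega_X$ and $\omega=\theta+j^{-1}\omega_X$ has torsion uniformly bounded by that of $\theta$ and $\omega_X$;
\item the first-order term $\frac{2}{({\rm Tr}\,\omega_\varphi)^2}{\rm Re}\,\langle T,\partial {\rm Tr}\,\omega_\varphi\rangle$, which is the real obstacle.
\end{itemize}

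At $x_0$ we have $\partial H=0$. This converts $\partial\log{\rm Tr}$ into $-A_1\partial(v+C_0+1)^2+A_2\partial(\varphi-\rho-\varepsilon_0\psi^-)$. As in \cite{TW10}, the resulting $A_2\partial\varphi$ term is absorbed via the identity $\partial(\varphi)$ paired with torsion, using Cauchy--Schwarz and the good term $A_2\varepsilon_0{\rm Tr}_{\omega_\varphi}(\omega_X)$. The gradient terms $\partial v$, $\partial\rho$, $\partial\psi^-$ are absorbed by the positive terms $2A_1{\rm Tr}_{\omega_\varphi}(dv\wedge d^cv)$ and $A_2\varepsilon_0{\rm Tr}_{\omega_\varphi}(\omega_{\psi^-})$. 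I expect this absorption to be the main obstacle. If it fails, the fallback is the standard trick of Phong--Sturm or Tosatti--Weinkove: replace $-A_2\varphi$ by $-A_2\varphi+\frac{1}{\varphi-\inf\varphi+1}$-type terms, or use the $\log$ of $\varphi$ shifted, to generate a $|\partial\varphi|^2_{\omega_\varphi}$ good term.

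The Ricci term is handled exactly as in the Kähler case, through the equation, yielding terms in ${\rm Tr}_{\omega_X}(\omega_v)$, $|\partial v|^2$, $\omega_{\psi^-}$ and $\omega_{\psi^+}$. Here $dd^c\psi^+\ge-\omega$ gives the correct sign after moving $-dd^c\psi^+$ to the right.

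Choosing $A_1\approx C_2$ and then $A_2$ large relative to $\varepsilon_0^{-1}$ gives ${\rm Tr}_{\omega_\varphi}(\omega_X)\le C$ at $x_0$. Lemma \ref{lem: elementary inequalities} together with the equation then yields
\[
{\rm Tr}_{\omega_X}\omega_\varphi(x_0)\le Ce^{-\psi^-(x_0)}.
\]
Inserting this into $H(x_0)$ with $A_2\varepsilon_0\ge2$, the $\psi^-(x_0)$ contributions have a favourable sign, so $H\le C$ everywhere. This gives
\[
\Delta_{\omega_X}\varphi\le e^{-C(\psi^-+\rho)}.
\]
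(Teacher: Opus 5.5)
\paragraph{Main gap: the torsion term in the Laplacian estimate.} Your relative $L^\infty$ bound follows the paper, but the Laplacian estimate has a genuine gap, and it sits exactly at the term you flag.

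With $H$ linear in $u:=\varphi-\rho-\varepsilon_0\psi^-$, the condition $\partial H(x_0)=0$ turns the torsion term $\frac{2}{({\rm Tr}_{\omega_X}\tilde\omega)^2}{\rm Re}\sum_{i,j,k}\tilde g^{i\bar i}T_{ij\bar j}\tilde g_{k\bar k\bar i}$ into $\frac{2}{{\rm Tr}_{\omega_X}\tilde\omega}{\rm Re}\sum_i\tilde g^{i\bar i}\tau_i\bigl(A_2u_{\bar i}-2A_1(v+C_0+1)v_{\bar i}\bigr)$, where $\tau_i=\sum_jT_{ij\bar j}$.

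\begin{itemize}
\item The $v$-part is harmless: it is absorbed by $2A_1{\rm Tr}_{\tilde\omega}(dv\wedge d^cv)$.
\item Any Cauchy--Schwarz splitting of the $A_2u_{\bar i}$ part leaves a term $\eta|\partial u|^2_{\tilde\omega}$, with $\eta>0$, on the bad side.
\item With $H$ linear in $u$, nothing in your inequality controls that term. There is no gradient bound on $\varphi$, and $\partial\rho$ and $\partial\psi^-$ blow up.
\item ${\rm Tr}_{\tilde\omega}(\omega_{\psi^-})$ is second order and does not dominate $|\partial\psi^-|^2_{\tilde\omega}$. For $\psi^-\approx\log(|z|^2+\epsilon)$ the gradient-to-Hessian ratio is of order $|z|^2/\epsilon$.
\end{itemize}

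So the absorption you claim into $A_2\varepsilon_0{\rm Tr}_{\tilde\omega}(\omega_X)$ and $A_2\varepsilon_0{\rm Tr}_{\tilde\omega}(\omega_{\psi^-})$ fails, and ${\rm Tr}_{\tilde\omega}(\omega_X)(x_0)\le C$ is never reached.

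\paragraph{Why the fallback is not enough, and what the paper does.} Your fallback points the right way but targets the wrong function. A Phong--Sturm correction in $\varphi$ alone produces a good term $|\partial\varphi|^2_{\tilde\omega}$, whereas the bad term involves $\partial u$, including $\partial\rho$ and $\partial\psi^-$.

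The paper instead takes $H=\log{\rm Tr}_{\omega_X}(\tilde\omega)+Av^2-\gamma(u)$ with the following choices.
\begin{itemize}
\item $u=\varphi-\rho-\varepsilon_0\psi^-+C_3\geq 1$; this is where the relative $L^\infty$ bound is really used.
\item $\gamma(u)=Ku+\log u$, with $K$ a large multiple of $\varepsilon_0^{-1}$.
\end{itemize}
Then $-\gamma''(u)|\partial u|^2_{\tilde\omega}$ absorbs the Cauchy--Schwarz remainder. The price is an error term $C\frac{\gamma'(u)^2}{-\gamma''(u)}\frac{{\rm Tr}_{\tilde\omega}(\omega_X)}{({\rm Tr}_{\omega_X}\tilde\omega)^2}\le Cu^2\frac{{\rm Tr}_{\tilde\omega}(\omega_X)}{({\rm Tr}_{\omega_X}\tilde\omega)^2}$. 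Since $u$ is unbounded near $D$ and where $\psi^-$ is very negative, you cannot simply conclude. The paper therefore splits into two cases at $x_0$.
\begin{itemize}
\item If ${\rm Tr}_{\omega_X}(\tilde\omega)\geq C'u$, the error is absorbed. Your final step then goes through: the equation, Lemma \ref{lem: elementary inequalities}, and the relative bound with $\varepsilon=\varepsilon_0/2$ giving a favourable sign for $\psi^-$.
\item If ${\rm Tr}_{\omega_X}(\tilde\omega)\le C'u$, then directly $H(x_0)\le\log u+C-\gamma(u)\le C$.
\end{itemize}
The concave function of the full $u$ and this dichotomy are the missing ingredients.

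\paragraph{A smaller slip in the relative bound.} In the $L^\infty$ part, do not substitute $b^{-1}w_b=\varphi-(1-b^{-1})\psi^-$ on the contact set. That substitution returns the density $b^ne^{\varphi+g+\psi^+-\psi^-}$, which is only in $L^1$. Keep the factor $e^{b^{-1}w_b}$ and let \cite[Theorem 3.4]{GL23Crelle} handle it.
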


The existence of $\varphi$ follows from \cite{Cher87}. 
 
 Several computations in the proof below are similar to those in \cite[Theorem 4.2]{GL23Crelle}, \cite{TW10a}, \cite{To18},  but we repeat them for completeness. We use $C_j$ to denote various constants that only depend on the fixed data. 
 \begin{proof}
 
 The relative $L^{\infty}$-estimate can be proved in the same way as we have done in Lemma \ref{lem: relative Linfty estimate}. Let us now proceed to the Laplacian estimate. 
 Replacing $v$ with $v+C_0+1$, we can assume that $1\leq v\leq 2C_0+1$.

 	Consider
\[
H:=\log {\rm Tr}_{\omega_X}(\tilde{\omega}) +Av^2-\gamma(u), 
\]
where $\tilde{\omega}=\omega+ dd^c\varphi$, $u=\varphi-\rho-\varepsilon_0\psi^-+C_3$, $A>0$ is a constant and $\gamma$ is a smooth increasing function (with $\gamma(+\infty)=+\infty$) that will be specified later. From the $L^{\infty}$-estimate we know that $u\geq 1$ (if we choose $C_3$ sufficiently large), therefore $H$ is bounded from above on $X$. 
Since $H(D)=-\infty$,  the maximum of $H$ is attained at some point $x_0 \in X\setminus D$. Since we want to bound $H(x_0)$ from above, we can assume that ${\rm Tr}_{\omega_X}(\tilde{\omega})\geq 1$ at $x_0$. 

We use special coordinates at this point, as defined by Guan-Li in \cite{GL10}, so that
\[
g_{i\bar j}=\delta_{ij}, \, \, g_{i \bar i j}:=\frac{\partial g_{i\bar i}}{\partial z_j}=0 \, \, {\rm and} \, \, \tilde{g}_{i \bar j} \, \, {\rm is \, \, diagonal},
\]
where $g$ denotes the Riemannian metric associated to $\omega_X$ while $\tilde{g}$ corresponds to $\tilde{\omega}=\omega+dd^c \varphi$. To construct the desired coordinates we first use a linear transformation to make $\tilde{g}_{i\bar j}$ diagonal (at $x_0$), and then apply (2.19) in \cite{GL10}. 

The Laplacian of $H$ can be computed as
\[
\Delta_{\tilde{\omega}} H=\frac{\Delta_{\tilde{\omega}}{\rm Tr}_{\omega_X}(\tilde{\omega})}{{\rm Tr}_{\omega_X}(\tilde{\omega})}-\frac{|\partial {\rm Tr}_{\omega_X}(\tilde{\omega})|^2_{\tilde {\omega}}}{({\rm Tr}_{\omega_X}(\tilde{\omega}))^2} +A\Delta_{\tilde{\omega}}v^2- \gamma'(u)\Delta_{\tilde{\omega}}(u)-\gamma''(u)|\partial u|^2_{\tilde {\omega}}.
\]
We first take care of $\Delta_{\tilde{\omega}}{\rm Tr}_{\omega_X}(\tilde{\omega})$,
\begin{align*}
    \Delta_{\tilde{\omega}}{\rm Tr}_{\omega_X}(\tilde{\omega})&=\sum_{i,j,k,l}\tilde{g}^{i \bar j} (g^{k\bar l}\tilde{g}_{k \bar l})_{i\bar j}\\&=\sum_{i,k}\tilde{g}^{i \bar i}\tilde{g}_{k \bar k  i\bar i}-2 \Re \left( \sum_{i,j,k}\tilde{g}^{i \bar i}g_{j\bar k \bar i}\tilde{g}_{k \bar j i}\right)+  \sum_{i,k}\tilde{g}^{i \bar i}g_{j\bar k i}g_{j\bar k \bar i}\tilde{g}_{k \bar k }\\&+ \sum_{i,j,k}\tilde{g}^{i \bar i}g_{k\bar j i}g_{j\bar k \bar i}\tilde{g}_{k \bar k }-\sum_{i,j,k}\tilde{g}^{i \bar i}{g}_{k \bar k \bar i i}\tilde{g}_{k \bar k}\\&\geq
    \sum_{i,k}\tilde{g}^{i \bar i}\tilde{g}_{k \bar k  i\bar i}-2 \Re \left( \sum_{i,j,k}\tilde{g}^{i \bar i}g_{j\bar k \bar i}\tilde{g}_{k \bar j i}\right) -C_4{\rm Tr}_{\omega_X}(\tilde{\omega}){\rm Tr}_{\tilde{\omega}}(\omega_X).
\end{align*}
We also have 
\[
{\rm Tr}_{\omega_X}({\rm Ric (\tilde {\omega}}))=\sum_{i,k}\tilde{g}^{i \bar i}\left( -\tilde{g}_{ i \bar i k\bar k }+ \sum_{j}\tilde{g}^{j \bar j}\tilde{g}_{i\bar j k }\tilde{g}_{j\bar i \bar k}\right),
\]
where ${\rm Ric (\tilde {\omega}})$ is the Chern-Ricci form of $\tilde {\omega}$.

Using these inequalities and the fact that $|\tilde{g}_{ i \bar i k\bar k }-\tilde{g}_{  k\bar k i \bar i }| \leq C_5$, we obtain
\begin{flalign}
    \Delta_{\tilde{\omega}}{\rm Tr}_{\omega_X}(\tilde{\omega}) &\geq \sum_{i,j,k}\tilde{g}^{i \bar i}\tilde{g}^{j \bar j}\tilde{g}_{i\bar j k }\tilde{g}_{j\bar i \bar k} -{\rm Tr}_{\omega_X}({\rm Ric (\tilde {\omega}})) -C_6{\rm Tr}_{\tilde{\omega}}(\omega_X) \label{eq:LaplacianTrace}\\& -2 \Re \left( \sum_{i,j,k}\tilde{g}^{i \bar i}g_{j\bar k \bar i}\tilde{g}_{k \bar j i}\right) -C_7{\rm Tr}_{\omega_X}(\tilde{\omega}){\rm Tr}_{\tilde{\omega}}(\omega_X). \nonumber 
\end{flalign}
Since 
\[
\tilde \omega^n=e^{\varphi+ g(x,v)+\psi^+-\psi^-}\omega_X^n,
\]
we have
\[
{\rm Ric (\tilde{\omega}})={\rm Ric ( {\omega_X}})-dd^c(\varphi + g(x,v)+\psi^+-\psi^-).
\]
By the Cauchy-Schwarz inequality we thus have
\begin{align*}
-{\rm Tr}_{\omega_X}({\rm Ric (\tilde {\omega}}))&=-{\rm Tr}_{\omega_X}({\rm Ric ({\omega_X}}))+{\rm Tr}_{\omega_X}(\tilde{\omega}-\omega)\\
& +{\rm Tr}_{\omega_X}(dd^c g(x,v)) 
+{\rm Tr}_{\omega_X}(\omega_{\psi^+}-\omega_{\psi^-})\\
&\geq
-C_8 -C_9|\partial v|^2_{\omega_X}- C_{10}{\rm Tr}_{\omega_X}(\omega_v)-{\rm Tr}_{\omega_X}(\omega_{\psi^-}),
\end{align*}
and therefore,
\begin{flalign}
    \Delta_{\tilde{\omega}}{\rm Tr}_{\omega_X}(\tilde{\omega}) &\geq \sum_{i,j,k}\tilde{g}^{i \bar i}\tilde{g}^{j \bar j}\tilde{g}_{i\bar j k }\tilde{g}_{j\bar i \bar k} -C_8-C_9|\partial v|^2_{\omega_X}- C_{10}{\rm Tr}_{\omega_X}( {\omega}_v)-{\rm Tr}_{\omega_X}(\omega_{\psi^-})\label{eq:LaplacianTrace2}\\
    & -C_6{\rm Tr}_{\tilde{\omega}}(\omega_X)  -2 \Re \left( \sum_{i,j,k}\tilde{g}^{i \bar i}g_{j\bar k \bar i}\tilde{g}_{k \bar j i}\right) -C_7{\rm Tr}_{\omega_X}(\tilde{\omega}){\rm Tr}_{\tilde{\omega}}(\omega_X). \nonumber 
\end{flalign}
Since the special choice of coordinates at $x_0$ ensures that $g_{j\bar j \bar i}=0$, using Cauchy-Schwarz inequality and $|\tilde{g}_{ k \bar j i }-\tilde{g}_{  i\bar j k }| \leq C_5$, we obtain
\begin{align*}
    \left| 2 \Re \left( \sum_{i,j,k}\tilde{g}^{i \bar i}g_{j\bar k \bar i}\tilde{g}_{k \bar j i}\right)\right| &\leq \left|  2 \Re \left( \sum_i\sum_{j\neq k}\tilde{g}^{i \bar i}g_{j\bar k \bar i}\tilde{g}_{i \bar j k}\right) \right|+ C_{11}{\rm Tr}_{\tilde{\omega}}(\omega_X) \\ & \leq \sum_i\sum_{j\neq k}\Big(\tilde{g}^{i \bar i}\tilde{g}^{j \bar j}\tilde g_{i\bar j k}\tilde{g}_{j \bar i \bar k} + \tilde{g}^{i \bar i}\tilde{g}_{j \bar j} g_{ j \bar k \bar i }{g}_{k \bar j i}\Big) + C_{11}{\rm Tr}_{\tilde{\omega}}(\omega_X) \\ & \leq \sum_i\sum_{j\neq k} \tilde{g}^{i \bar i}\tilde{g}^{j \bar j}\tilde g_{i\bar j k }\tilde{g}_{j \bar i \bar k} + C_{12}{\rm Tr}_{{\omega_X}}(\tilde\omega){\rm Tr}_{\tilde{\omega}}(\omega_X)+ C_{11}{\rm Tr}_{\tilde{\omega}}(\omega_X).
\end{align*}
Together with \eqref{eq:LaplacianTrace2} this yields
\begin{flalign}\label{eq:LaplacianTrace3}
	 \Delta_{\tilde{\omega}}{\rm Tr}_{\omega_X}(\tilde{\omega}) & \geq I -C_8-C_9|\partial v|^2_{\omega_X}- C_{10}{\rm Tr}_{\omega_X}( {\omega}_v) -{\rm Tr}_{\omega_X}(\omega_{\psi^-}) \\
	 &
	 -C_{13}{\rm Tr}_{{\omega_X}}(\tilde\omega){\rm Tr}_{\tilde{\omega}}(\omega_X)- C_{14}{\rm Tr}_{\tilde{\omega}}(\omega_X), \nonumber
\end{flalign}
where $I:=\sum_{i,j} \tilde{g}^{i \bar i}\tilde{g}^{j \bar j}\tilde g_{i\bar j j }\tilde{g}_{j \bar i \bar j}$. We next compute
\begin{align*}
    |\partial {\rm Tr}_{\omega_X}(\tilde{\omega})|^2_{\tilde {\omega}}&= \sum_{i,j,k} \tilde{g}^{i \bar i}\tilde g_{j\bar j i }\tilde{g}_{k  \bar k \bar i}\\&=
    \sum_{i,j,k} \tilde{g}^{i \bar i}(T_{ij\bar j}+ \tilde g_{i\bar j j })(\overline{ T}_{ik\bar k}+\tilde{g}_{k   \bar i\bar k })\\&=
    \sum_{i,j,k} \tilde{g}^{i \bar i}\tilde g_{i\bar j j }\tilde{g}_{k   \bar i\bar k }+\sum_{i,j,k} \tilde{g}^{i \bar i}T_{ij\bar j}\overline{ T}_{ik\bar k} +2\Re\left(\sum_{i,j,k} \tilde{g}^{i \bar i}T_{ij\bar j}\tilde{g}_{k   \bar i\bar k }\right),    
\end{align*}
where $T_{ij\bar j}=\tilde g_{j\bar j i }- \tilde g_{i\bar j j }$ is under control, $|T_{ij\bar j}|\leq C_5$. By Cauchy-Schwarz inequality, we obtain
\begin{align*}
    \sum_{i,j,k} \tilde{g}^{i \bar i}\tilde g_{i\bar j j }\tilde{g}_{k   \bar i\bar k }&=\sum_{i} \tilde{g}^{i \bar i}\left|\sum_{j}\tilde g_{i\bar j j }\right|^2\\&\leq
    \left( \sum_{i,j} \tilde{g}^{i \bar i}\tilde{g}^{j \bar j}\tilde g_{i\bar j j }\tilde{g}_{j \bar i\bar j }\right)\left( \sum_{j} \tilde{g}_{j \bar j}\right)=I{\rm Tr}_{\omega_X}(\tilde{\omega}).
\end{align*}
We thus get
\begin{equation}\label{eq:PartialTrace}
    \frac{|\partial {\rm Tr}_{\omega_X}(\tilde{\omega})|^2_{\tilde {\omega}}}{({\rm Tr}_{\omega_X}(\tilde{\omega}))^2} \leq \frac{I}{{\rm Tr}_{\omega_X}(\tilde{\omega})}+C_{15} \frac{{\rm Tr}_{\tilde{\omega}}(\omega_X)}{({\rm Tr}_{\omega_X}(\tilde{\omega}))^2}+ \frac{2}{({\rm Tr}_{\omega_X}(\tilde{\omega}))^2}\Re\left(\sum_{i,j,k} \tilde{g}^{i \bar i}T_{ij\bar j}\tilde{g}_{k   \bar i\bar k }\right).
\end{equation}
We have
\[
({\rm Tr}_{\omega_X}(\tilde{\omega}))_{\bar i}=\sum_{k,l} \Big(-{g}^{k\bar p}{g}^{q\bar l} g_{p\bar q \bar i }\tilde{g}_{k \bar l } + g^{k\bar l }\tilde{g}_{k \bar l \bar i } \Big).
\]
Since $H_{\bar i}=0$ and $ {g}_{k \bar k \bar i }=0 $ at $x_0$, we obtain by differentiating $H$ once 
\[
\sum_k\tilde{g}_{k \bar k \bar i }={\rm Tr}_{\omega_X}(\tilde{\omega})\Big(2Avv_{\bar i}-\gamma'(u)u_{\bar i}\Big).
\]
Cauchy-Schwarz inequality yields
\begin{align*}
&\left| \frac{2}{({\rm Tr}_{\omega_X}(\tilde{\omega}))^2}\Re\left(\sum_{i,j,k} \tilde{g}^{i \bar i}T_{ij\bar j}\tilde{g}_{k   \bar k \bar i }\right) \right|\\
 &\leq C_{16}\frac{\gamma'(u)^2}{(-\gamma''(u))}\frac{{\rm Tr}_{\tilde{\omega}}(\omega_X)}{({\rm Tr}_{\omega_X}(\tilde{\omega}))^2} + (-\gamma''(u))|\partial u |^2_{\tilde \omega}
+\frac{A|\partial v |^2_{\tilde \omega}+ AC_{17}{\rm Tr}_{\tilde{\omega}}(\omega_X) }{{\rm Tr}_{\omega_X}(\tilde{\omega})}.
\end{align*}
Since $|\tilde {g}_{k \bar k \bar i }-\tilde {g}_{k \bar i\bar k  }| \leq C_5$, we have
\begin{align*}
\left| \frac{2}{({\rm Tr}_{\omega_X}(\tilde{\omega}))^2}\Re\left(\sum_{i,j,k} \tilde{g}^{i \bar i}T_{ij\bar j}\tilde{g}_{k   \bar i \bar k  }\right) \right| &\leq C_{16}\left(\frac{\gamma'(u)^2}{(-\gamma''(u))}+ 1 \right)\frac{{\rm Tr}_{\tilde{\omega}}(\omega_X)}{({\rm Tr}_{\omega_X}(\tilde{\omega}))^2} + (-\gamma''(u))|\partial u |^2_{\tilde \omega}\\&
+A\frac{|\partial v |^2_{\tilde \omega}}{{\rm Tr}_{\omega_X}(\tilde{\omega})}+AC_{17}\frac{{\rm Tr}_{\tilde{\omega}}(\omega_X)}{{\rm Tr}_{\omega_X}(\tilde{\omega})}.
\end{align*}

We also have
\[
A \Delta_{\tilde \omega}v^2 \geq 2A{\rm Tr}_{\tilde{\omega}}(\omega_v)+2A {\rm Tr}_{\tilde{\omega}}(dv \wedge d^cv)-A(4C_0+2){\rm Tr}_{\tilde{\omega}}(\omega_X).
\]
Since $0\geq \Delta_{\tilde \omega} H$ at $x_0$, it follows from this, \eqref{eq:LaplacianTrace3}, and \eqref{eq:PartialTrace}
that 
\begin{flalign}
    0 & \geq \Delta_{\tilde \omega} H  \\& \nonumber \geq \frac{ -C_8 -C_9|\partial v|^2_{\omega_X}- C_{10}{\rm Tr}_{\omega_X}( {\omega}_v) -C_{13}{\rm Tr}_{\omega_X}(\tilde\omega){\rm Tr}_{\tilde{\omega}}(\omega_X)- C_{14}{\rm Tr}_{\tilde{\omega}}(\omega_X)-{\rm Tr}_{\omega_X}(\omega_{\psi^-})}{{\rm Tr}_{\omega_X}(\tilde {\omega})}\\& \nonumber -C_{16}\left(\frac{\gamma'(u)^2}{(-\gamma''(u))}+ 2 \right)\frac{{\rm Tr}_{\tilde{\omega}}(\omega_X)}{({\rm Tr}_{\omega_X}(\tilde{\omega}))^2} - (-\gamma''(u))|\partial u |^2_{\tilde \omega}
-A\frac{|\partial v |^2_{\tilde \omega}}{{\rm Tr}_{\omega_X}(\tilde{\omega})}\\& \nonumber-AC_{17}\frac{{\rm Tr}_{\tilde{\omega}}(\omega_X)}{{\rm Tr}_{\omega_X}(\tilde{\omega})}+ 2A{\rm Tr}_{\tilde{\omega}}(\omega_v)+2A {\rm Tr}_{\tilde{\omega}}(dv \wedge d^cv)-A(4C_0+2){\rm Tr}_{\tilde{\omega}}(\omega_X)\\& \nonumber - \gamma'(u){\rm Tr}_{\tilde{\omega}}(dd^c u)-\gamma''(u)|\partial u|^2_{\tilde {\omega}}.
\end{flalign}

Note that $dd^c u=\omega_{\varphi}- \omega_{\rho} - \varepsilon_0 dd^c \psi^-\leq \tilde{\omega}- \varepsilon_0\omega_X-\varepsilon_0\omega_{\psi^-}$. 
For positive $(1,1)$-forms $\alpha$, $\beta$, $\gamma$, we have ${\rm Tr}_{\alpha}(\beta){\rm Tr}_{\beta}(\gamma)\geq {\rm Tr}_{\alpha}(\gamma)$, therefore,  
\begin{flalign*}
	{\rm Tr}_{\tilde{\omega}}(dv \wedge d^cv){\rm Tr}_{\omega_X}(\tilde{\omega} )&\geq {\rm Tr}_{\omega_X}(dv \wedge d^cv)=|\partial v |_{\omega_X}^2,\\
	{\rm Tr}_{\tilde{\omega}}(\omega_v){\rm Tr}_{\omega_X}(\tilde{\omega})
	& \geq {\rm Tr}_{\omega_X}(\omega_v), \\
	{\rm Tr}_{\tilde{\omega}}(\omega_{\psi^-}){\rm Tr}_{\omega_X}(\tilde{\omega}) &\geq {\rm Tr}_{\omega_X}(\omega_{\psi^-}). 
\end{flalign*}
Thus taking $A=C_9+C_{10}$, we obtain
\begin{flalign}\label{eq:ineqality-max}
0 &\geq (\varepsilon_0\gamma'(u)-C_{18}){\rm Tr}_{\tilde{\omega}}(\omega_X)-C_{16}\left(\frac{\gamma'(u)^2}{(-\gamma''(u))}+ 2 \right)\frac{{\rm Tr}_{\tilde{\omega}}(\omega_X)}{({\rm Tr}_{\omega_X}(\tilde{\omega}))^2} \\& \nonumber-C_{19}\frac{{\rm Tr}_{\tilde{\omega}}(\omega_X)}{{\rm Tr}_{\omega_X}(\tilde{\omega})}- n\gamma'(u)+ \frac{(\varepsilon_0 \gamma'(u)-1) {\rm Tr}_{\omega_X}(\omega_{\psi^-})}{{\rm Tr}_{\omega_X}(\tilde{\omega})}.
\end{flalign}

We now choose the function $\gamma$ as
\[
\gamma(u):=\frac{2(C_{18}+1)}{\min(\varepsilon_0,1)}u+\log u.
\]
Since $u\geq 1$ we have 
\[
\gamma'(u) \geq C_{20}+1, \; -\gamma''(u)\leq 1 \; \textnormal{and} \, \, \frac{\gamma'(u)^2}{(-\gamma''(u))}+ 2 \leq C_{20}u^2.
\]

By incorporating this into \eqref{eq:ineqality-max} we obtain 
\begin{flalign}\label{eq:ineqality-max_2}
    0&\geq {\rm Tr}_{\tilde{\omega}}(\omega_X)-C_{21}u^2\frac{{\rm Tr}_{\tilde{\omega}}(\omega_X)}{({\rm Tr}_{\omega_X}(\tilde{\omega}))^2}-C_{21}\frac{{\rm Tr}_{\tilde{\omega}}(\omega_X)}{{\rm Tr}_{\omega_X}(\tilde{\omega})}- C_{21}.
\end{flalign}

At the point $x_0$ there are two possibilities. 
\begin{enumerate}
	\item If ${\rm Tr}_{\omega_X}(\tilde{\omega})\geq 4C_{21}u$ then from \eqref{eq:ineqality-max_2} we get ${\rm Tr}_{\tilde{\omega}}(\omega_X) \leq C_{22}$. Using
    $$
    {\rm Tr}_{\omega_X}(\tilde \omega)\leq n\frac{\tilde{\omega}^n}{\omega_X^n}({\rm Tr}_{\tilde{\omega}}(\omega_X))^{n-1}\leq C_{23}e^{-\psi^-}
    $$
    and $u-\rho -\varepsilon_0 \psi^- \geq -\frac{\varepsilon_0}{2}\psi^- -C_{24}$ we then get $H(x_0)\leq C_{25}$. 
    \item    
If ${\rm Tr}_{\omega_X}(\tilde{\omega})\leq 4C_{21}u$ then from the definition of $H$, we get 
\[
H(x_0) \leq \log u +C_{26} -\gamma(u) \leq C_{26}.
\] 
\end{enumerate}

 In both cases we have $H(x_0)\leq C_{28}$, hence $H(x) \leq C_{29}$ and the estimate follows. 
 \end{proof}

\subsection{Proof of Theorem \ref{thm: bdd implies smooth intro}}
We are finally ready to prove Theorem \ref{thm: bdd implies smooth intro} in the Hermitian case. 
Recall that $\varphi \in PSH(X,\theta) \cap L^{\infty}(X)$ is a solution to the equation
$$
(\theta + dd^c \varphi)^n = e^{f (x,\varphi)+\psi^+ - \psi^-}\omega_X^n,
$$
where $\psi^{\pm}$ is smooth in $\Omega$.
The goal is to prove that $\varphi$ is smooth in $\Omega$.
\begin{proof}
   For each $j>0$ we set $\theta_j=\theta+j^{-1}\omega_X$. We approximate $\varphi$ from above by using Demailly's approximation theorem: there exists a decreasing sequence $v_j\in {\rm PSH}(X,\theta_j)\cap C^{\infty}(X)$  converging pointwise to $\varphi$. We can assume that $-C_0\leq v_j\leq C_0$. 
   
   We write  $f(x,t) = t + g(x,t)$, where  $g(x,t) := f(x,t) -t$ is a smooth function.  By Aubin-Yau theorem (\cite{Aub78,Yau78}), for each $j$ there exists $u_j \in PSH(X,\theta_j) \cap C^{\infty} (X)$ solving the equation
   \[
   (\theta_j +dd^c u_j)^n = e^{u_j+g(x,v_j)+\psi_j}\omega_X^n,
   \]
where $\psi_j=\psi_j^+-\psi_j^-$, and $\psi_j^{\pm}$ are the smooth aproximants of $\psi^{\pm}$ using Demailly's convolution.
Thus $\psi^{\pm}_j \in {\rm PSH}(X,\theta_j)$ for each $j$ and, by Jensen's inequality  $e^{\psi_j^+-\psi_j^-} \leq f_j,$
 where $f := e^{\psi^+ - \psi^-}$ and $(f_j)$ is the smooth aproximants of $f$ using Demailly's convolution. Since $ f_j \to f $ in $L^1(X))$, it follows follows from the Lebesgue dominated convergence theorem that (up to taking a subsequence), $e^{\psi_j^+-\psi_j^-} \to e^{\psi^+-\psi^-}$ in $L^1(X)$ as $j \to + \infty$. We refer to \cite{DnL17} and \cite[Proof of Theorem 4.2]{GL23Crelle} for the details. 
 
 By the estimate in Proposition \ref{prop: estimate Hermitian} above we have 
\[
-C_1 + \frac{\psi_j^-}{2} \leq u_j\leq C_1, \; |\Delta_{\omega_X}(u_j)|\leq e^{-C_2 \psi_j^--C_2 \rho}\leq e^{-C_2 \psi^{-}-C_2\rho}.
\]
Up to extracting a subsequence, we can assume that $u_j\to u\in {\rm PSH}(X,\theta)$ in $L^1(X,\omega_X^n)$. Now, arguing as in the K\"ahler case we can show that $u=\varphi$ is smooth in $\Omega\setminus D$. 
\end{proof}

As a direct consequence of our a priori estimates above we obtain the folowing regularity result. 
\begin{proposition}
	Assume the same setting as in Theorem \ref{thm: bdd implies smooth intro}. If $\varphi \in PSH(X,\theta)\cap L^{\infty}(X)$ solves 
	\[
	(\theta+dd^c \varphi)^n = e^{f(x,v) + \psi^+-\psi^-} \omega_X^n, 
	\]
	for some $v\in PSH(X,\omega_X)\cap L^{\infty}(X)$. Then $\Delta \varphi$ is locally bounded in $\Omega \setminus D$. 
\end{proposition}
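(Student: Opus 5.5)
We will rerun the approximation scheme of the proof of Theorem \ref{thm: bdd implies smooth intro} in the Hermitian case. The key point is that the right-hand side depends on the given bounded function $v$, not on the unknown $\varphi$. Write the equation as
\[
(\theta+dd^c \varphi)^n = e^{\varphi + g(x,\varphi, v)+\psi^+-\psi^-}\omega_X^n, \qquad g(x,s,t):= f(x,t)-s .
\]
We approximate $\varphi$ and $v$ from above. By Demailly's theorem there are smooth $\theta_j$-psh functions $\varphi_j\searrow\varphi$ and smooth $\omega_X$-psh functions $v_j\searrow v$, where $\theta_j=\theta+j^{-1}\omega_X$. Since $\varphi$ and $v$ are bounded, all of them lie in a fixed interval $[-C_0,C_0]$. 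We also take smooth approximants $\psi_j^{\pm}$ as before.

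For each $j$ we solve, by Cherrier's theorem,
\[
(\theta_j+dd^c u_j)^n = e^{u_j+G_j+\psi_j^+-\psi_j^-}\omega_X^n, \qquad G_j:=f(x,v_j)-\varphi_j .
\]
We then apply Proposition \ref{prop: estimate Hermitian}. In its proof the term $g(x,v)$ only enters through $dd^c g(x,v)$, which is bounded by $C|\partial v|^2+C\,{\rm Tr}(\omega_v)$ plus a constant. The same argument works with two smooth bounded quasi-psh inputs: $v_j$ is $\omega_X$-psh, $\varphi_j$ is $\theta_j$-psh, and both are bounded by $C_0$. To handle them we replace the auxiliary term $Av^2$ by $A(v_j^2+\varphi_j^2)$, after shifting both to lie in $[1,2C_0+1]$. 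The positive terms $2A\,{\rm Tr}_{\tilde\omega}(\omega_{v_j})$, $2A\,{\rm Tr}_{\tilde\omega}(\omega_{\varphi_j})$ and the gradient terms $2A\,{\rm Tr}_{\tilde\omega}(dv_j\wedge d^c v_j)$ then absorb the bad terms coming from the Ricci form. The relative $L^\infty$ bound goes through unchanged as in Lemma \ref{lem: relative Linfty estimate}, since $G_j$ is uniformly bounded. This gives
\[
u_j\ge \tfrac12\psi_j^- - C, \qquad \Delta_{\omega_X}u_j\le e^{-C(\psi^-+\rho)},
\]
with $C$ independent of $j$.

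Next we pass to the limit. After extracting a subsequence, $u_j\to u$ in $L^1$. The $b$-trick envelopes, combined with Propositions \ref{prop: min principle} and \ref{prop: max principle} exactly as in the degenerate Kähler proof, show that $u$ is a bounded $\theta$-psh function with
\[
(\theta+dd^c u)^n = e^{u-\varphi+f(x,v)+\psi^+-\psi^-}\omega_X^n .
\]
The function $\varphi$ solves the same equation, since $e^{\varphi-\varphi}=1$. The domination principle (Proposition \ref{prop: max, comp, dom principle}(1), applied in both directions) therefore gives $u=\varphi$. The uniform Laplacian bound passes to the limit in the sense of distributions, so $\Delta_{\omega_X}\varphi\le e^{-C(\psi^-+\rho)}$. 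The right-hand side is locally bounded on $\Omega\setminus D$, because there $\psi^-$ is smooth and $\rho$ is smooth. Since $\varphi$ is quasi-psh, $\Delta\varphi$ is also bounded below, and local boundedness follows.

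The main obstacle is the Laplacian estimate with the extra input $\varphi_j$. We have no control on $\Delta\varphi_j$ or $\Delta v_j$, so every second-derivative term of the data has to be absorbed by the $A(v_j^2+\varphi_j^2)$ device, using the trace inequalities of Lemma \ref{lem: elementary inequalities}. We expect this to go through exactly as in the proof of Proposition \ref{prop: estimate Hermitian}. The one check needed is that $v_j$ is only $\omega_X$-psh rather than $\omega$-psh. This costs a term $C\,{\rm Tr}_{\tilde\omega}(\omega_X)$, which is absorbed by choosing $\gamma'$ large.
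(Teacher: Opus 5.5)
Your argument is correct. It has the same architecture as the paper's proof: regularize, solve a twisted equation with an $e^{u_j}$ factor, apply the relative $L^\infty$ and weighted Laplacian estimates, and identify the limit via the domination principle. However, your auxiliary equation differs from the paper's, and the difference matters.

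\textbf{What the paper does.} It solves $(\theta+2^{-j}\omega_X+dd^c\varphi_j)^n=e^{\varphi_j-v_j+f(x,v_j)+\psi_j^+-\psi_j^-}\omega_X^n$. This uses Proposition \ref{prop: estimate Hermitian} verbatim, with $g(x,t)=f(x,t)-t$ and the single input $v_j$. But the limit of these $\varphi_j$ solves $(\theta+dd^c u)^n=e^{u-v+f(x,v)+\psi^+-\psi^-}\omega_X^n$. That is not the equation satisfied by $\varphi$ unless $v=\varphi$. So the paper's last step (``hence so is $\Delta\varphi$'') is not justified as written.

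\textbf{What you do instead.} You subtract smooth approximants $\varphi_j\searrow\varphi$ rather than $v_j$. Then $\varphi$ itself solves the limiting equation, and domination gives $u=\varphi$. The cost is that the Laplacian estimate is no longer literally Proposition \ref{prop: estimate Hermitian}, but a two-input variant. Your $A(v_j^2+\varphi_j^2)$ device handles this:
\begin{itemize}
\item The linear term $-dd^c\varphi_j$ in the Ricci form contributes only $-{\rm Tr}_{\omega_X}(\theta_j+dd^c\varphi_j)$, with no gradient term. This is absorbed by $2A\,{\rm Tr}_{\tilde\omega}(\theta_j+dd^c\varphi_j)$, using ${\rm Tr}_{\tilde\omega}(\cdot)\,{\rm Tr}_{\omega_X}(\tilde\omega)\ge {\rm Tr}_{\omega_X}(\cdot)$ and ${\rm Tr}_{\omega_X}(\tilde\omega)\ge 1$ at $x_0$.
\item The extra first-order term $2A\varphi_j\partial_{\bar i}\varphi_j$ produced by $H_{\bar i}=0$ at $x_0$ is absorbed by $2A|\partial\varphi_j|^2_{\tilde\omega}$, exactly as for $v$.
\item As you note, the fact that $v_j$ is only $\omega_X$-psh costs a harmless $C\,{\rm Tr}_{\tilde\omega}(\omega_X)$. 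The paper needs this same check.
\end{itemize}

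\textbf{One small imprecision.} The limit argument from the degenerate K\"ahler proof does not directly produce the equation for $u$. It gives two things:
\begin{itemize}
\item $u\ge\varphi$, via the $b$-trick envelopes, the minimum principle and domination;
\item $(\theta+dd^c u)^n\ge e^{u-\varphi+f(x,v)+\psi^+-\psi^-}\omega_X^n$, via the maximum principle, from which domination gives $u\le\varphi$.
\end{itemize}
Together these give $u=\varphi$, which is all you need.
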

\begin{proof}
	We approximate $v$ from above by a sequence of smooth $\omega_X$-psh functions $v_j$ and solve 
	\[
	(\theta+2^{-j} \omega_X +dd^c \varphi_j)^n = e^{\varphi_j-v_j+f(x,v_j)+\psi^+_j-\psi_j^-} \omega_X^n. 
	\]
	Our a priori estimates ensure that $\Delta \varphi_j$ is locally uniformly bounded in $\Omega\setminus D$, hence so is $\Delta \varphi$. 
\end{proof}

\section{The subsolution theorem} \label{sect: subsol thm}
The goal of this section is to prove that if the equation \eqref{eq:CMAeq2} admits a bounded subsolution lying below a bounded supersolution then it admits a bounded solution lying between them.

Recall our setup. We consider a  compact Hermitian manifold  $X$ of dimension $n$ and fix a Hermitian metric $\omega_X$ on $X$. Let $\theta$ be a smooth semipositive $(1,1)$-form that is big. By big, we mean there exists a function $\rho$ with analytic singularities along an analytic subset $D$ such that $\theta +dd^c \rho \geq 2\varepsilon_0 \omega_X$, for some $\varepsilon_0>0$.

We consider the following complex Monge-Amp\`ere equation 
 \begin{equation} 
 \label{eq:CMAeq2}
 (\theta+dd^c u)^n = e^{f(\cdot,u) + \psi^+-\psi^{-}} \omega_X^n,
 \end{equation}
where $f: X \times \mathbb R \rightarrow \mathbb R$ is a smooth function, and $\psi^{\pm}$ are quasi-psh functions on $X$ which are smooth in a fixed  open set $\Omega\subset X$ (not necessarily Zariski dense in $X$). 

We assume moreover that
\begin{equation} \label{eq:Lpcondition}
\int_X e^{ p \psi } \omega_X^n < + \infty,
\end{equation}
for some $p >1$, where $\psi := \psi^+-\psi^{-}$.

When  $t \longmapsto f(\cdot,t) $ is non increasing, the equation does not necessarily admit a solution even if the data are smooth and if a solution exists it is not unique in general. Nevertheless, our next result shows that, if the equation admits a bounded subsolution that lies below a bounded supersolution, then the equation admits a bounded solution which is smooth in the regular locus of the data.
 
 \begin{theorem} \label{thm:ExistenceBoundedSol}  Assume that the equation \eqref{eq:CMAeq2}  admits a subsolution $\underline u \in PSH(X,\theta) \cap L^{\infty}(X)$ and a  supersolution $\bar u \in PSH(X,\theta) \cap L^{\infty}(X)$ such that $\underline u  \leq \bar u$ in $X$. 
 
 Then the following properties hold :
 
 \noindent 1)  the equation   \eqref{eq:CMAeq2} admits  a bounded solution $u \in PSH(X,\theta) \cap L^{\infty}(X)$ which is smooth in $\Omega \setminus D$ and satisfies $\underline u \leq u \leq \bar u$ in $X$;
 
 \noindent 2)  the maximal subsolution 
 $$
 U := \sup \{v \in PSH(X,\theta) \cap L^{\infty}(X) ; \underline u \leq v \leq \bar u, (\theta + dd^c v)^n \geq e^{f(\cdot,v) + \psi^+ - \psi^-} \omega_X^n\}
 $$ 
 is a bounded solution of the equation \eqref{eq:CMAeq2} which is smooth in $\Omega\setminus D$;
 
 \noindent 3) the minimal supersolution $V := P_\theta(\bar w)$, where
 $$\bar w :=  \inf\{w \in PSH(X,\theta) \cap L^{\infty}(X) ; \underline u \leq w \leq \bar u, (\theta + dd^c w)^n \leq e^{f(\cdot,w)+\psi^+ - \psi^-} \omega_X^n\}
 $$
 is a bounded solution  of the equation \eqref{eq:CMAeq2} which is smooth in $\Omega\setminus D$.
 \end{theorem}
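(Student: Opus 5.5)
The plan is a Perron-type argument for 2) and 3), with smoothness supplied by Theorem \ref{thm: bdd implies smooth intro}; statement 1) then follows from 2). Throughout I write $\mu:=e^{\psi^+-\psi^-}\omega_X^n$. By \eqref{eq:Lpcondition}, the density of $\mu$ is in $L^p$, so Dirichlet-type local problems and global envelope problems with right-hand side $e^{h}\mu$, $h$ bounded, have bounded solutions by \cite[Theorem 3.4]{GL23Crelle}.

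For 2), let $\mathcal S$ be the family in the definition of $U$. It is nonempty since $\underline u\in\mathcal S$, and it is uniformly bounded between $\underline u$ and $\bar u$. By Choquet's lemma, $U^*$ is the upper envelope of a countable subfamily, and I may assume this subfamily is increasing, since $\max(v_1,v_2)\in\mathcal S$ by the maximum principle in Proposition \ref{prop: max, comp, dom principle}. Proposition \ref{prop: max principle} with $g=e^{f}$ and $f_j\equiv e^{\psi}$ then gives $U^*\in\mathcal S$, so $U=U^*$ is a bounded subsolution with $\underline u\le U\le\bar u$.

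It remains to show that $U$ is a supersolution. I would use an envelope construction instead of balayage, because local balayage is awkward in the Hermitian and big setting. Fix a large constant $A$ so that $t\mapsto f(\cdot,t)+At$ is increasing on $[-M,M]$, where $M$ bounds $\underline u$ and $\bar u$. Then solve
\[
(\theta+dd^c w)^n=e^{A w -A U+f(\cdot,U)}\mu
\]
with the obstacle constraint $w\le \bar u$. Concretely, I would take $w$ to be the solution of this equation (which exists and is unique since the right-hand side is increasing in $w$, by \cite{GL23Crelle}), or better the envelope of subsolutions below $\bar u$. By the domination principle and monotonicity, $U\le w\le\bar u$. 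Monotonicity of $f+At$ then shows that $w$ is again a subsolution of \eqref{eq:CMAeq2}:
\[
e^{Aw-AU+f(\cdot,U)}\ge e^{f(\cdot,w)}.
\]
Hence $w\in\mathcal S$, so $w\le U$ and $w=U$, and $U$ satisfies the equation with equality. \textbf{The main obstacle} is enforcing the constraint $w\le\bar u$ while keeping the equation. I would handle it by taking $w$ as the maximal $\theta$-psh subsolution of the linearized problem that lies below $\bar u$. Then, on $\{w<\bar u\}$, a local perturbation argument (or the projection theorem) gives equality, while on the contact set $\{w=\bar u\}$ the maximum principle compares the measures with $(\theta+dd^c\bar u)^n\le e^{f(\cdot,\bar u)}\mu$. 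Combining these yields $(\theta+dd^c U)^n\le e^{f(\cdot,U)}\mu$.

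Statement 3) is symmetric. I would take an infimum over supersolutions, regularize it with $P_\theta$, and use Proposition \ref{prop: min principle} to show that $V$ is a supersolution with $\underline u\le V\le\bar u$; here $\min$ of two supersolutions is handled through $P_\theta$. The analogous envelope argument, with $\underline u$ as a lower obstacle, then shows that $V$ is also a subsolution. Finally, smoothness of $U$ and $V$ in $\Omega\setminus D$ is immediate from Theorem \ref{thm: bdd implies smooth intro}, since both are bounded solutions, and 1) follows by taking $u=U$.
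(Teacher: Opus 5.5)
Your strategy is right, and once repaired it is exactly the paper's argument. As written, however, the key step fails because the monotonicity normalization has the wrong sign. You use $e^{Aw-AU+f(\cdot,U)}\geq e^{f(\cdot,w)}$ to put $w$ in $\mathcal S$. Given $w\geq U$, this is equivalent to $f(\cdot,w)-Aw\leq f(\cdot,U)-AU$. So it requires $t\mapsto f(\cdot,t)-At$ to be non-increasing, i.e.\ $\partial_t f\leq A$ on $X\times[-M,M]$. Your condition that $f(\cdot,t)+At$ be increasing does not give this: when $\partial_t f>0$ it holds for every $A\geq 0$. The same sign error is what makes $w\leq\bar u$ look like the ``main obstacle''. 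With $\partial_t f\leq A$ and $U\leq\bar u$ one has $(\theta+dd^c\bar u)^n\leq e^{f(\cdot,\bar u)}\mu\leq e^{A\bar u-AU+f(\cdot,U)}\mu$. So $\bar u$ is a supersolution of your linearized equation, $U$ is trivially a subsolution, and the comparison principle gives $U\leq w\leq\bar u$ for its unique bounded solution $w$. Then $w\in\mathcal S$, hence $w\leq U$, $w=U$, and $U$ solves \eqref{eq:CMAeq2}.

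The obstacle-problem detour should be dropped. Equality on $\{w<\bar u\}$ does not follow from the projection theorem, which concerns envelopes without a right-hand side. A local balayage there is not justified either, since that set need not be open. Moreover, that route would still need $f-At$ non-increasing to place $w$ in $\mathcal S$.

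With $A$ chosen this way, your $w$ is precisely the first iterate of the scheme in Proposition \ref{prop:Existence} started at $U$. The paper simply applies Proposition \ref{prop:Existence} to the ordered pair $U\leq\bar u$ and concludes by maximality of $U$. For 3) the same correction removes the lower obstacle. Let $w$ solve $(\theta+dd^c w)^n=e^{Aw-AV+f(\cdot,V)}\mu$. Then $\underline u$ is automatically a subsolution and $V$ a supersolution, so $\underline u\leq w\leq V$. Also $w$ is a supersolution of \eqref{eq:CMAeq2}, so it lies in the family defining $\bar w$, which gives $V\leq\bar w\leq w$ and hence $w=V$. The paper instead runs Proposition \ref{prop:Existence} between $\underline u$ and $V$ and uses minimality of $V$; either works. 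Your remaining steps agree with the paper: Choquet's lemma with Proposition \ref{prop: max principle} for $U$, Proposition \ref{prop: min principle} for $V$, and smoothness from Theorem \ref{thm: bdd implies smooth intro}.
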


When $\theta$ is Hermitian (i.e. $\theta>0$), any bounded $\theta$-psh solution to \eqref{eq: CMAE big herm} is automatically  continuous on $X$ since the right-hand side belongs to $L^p(X,dV_X)$; see Remark 3.13 in \cite{KN19}. When $\theta=\pi^*\omega_V$ is the pullback of some Hermitian form on some singular complex variety $V$ with klt singularities, the continuity of $u$ has been recently proved in \cite{Dang26}, building on \cite{CC25}.

 \subsection{Existence of a bounded solution}
 To prove the main result of this section we first establish the existence of a bounded solution given the existence of sub/super solutions.  We consider a more general degenerate complex Monge-Ampère equation
  \begin{equation}  \label{eq:CMAeq3}
 (\theta+dd^c u)^n = e^{f(\cdot,u) } g\omega_X^n,
 \end{equation}
 where $0 \leq g \in L^p(X)$ with $p > 1$ and  $\int_X g\omega_X^n > 0$.

 \begin{defi}  \noindent 1. We say that $\underline u \in {\rm PSH}(X,\theta) \cap L^{\infty}(X)$ is a subsolution of the equation \eqref{eq:CMAeq3} if it satisfies the inequality 
 $$
 (\theta+dd^c \underline u)^n  \geq e^{f(\cdot,\underline u)} g\omega_X^n, 
 $$
 in the weak  sense on $X$.
 
 \noindent 2. We say that $\bar u \in PSH(X,\theta) \cap L^{\infty}(X)$ is a supersolution of the equation \eqref{eq:CMAeq3} if it satisfies the inequality 
 $$
 (\theta+dd^c \bar u)^n  \leq e^{f(\cdot,\bar u)} g\omega_X^n, 
 $$
 in the weak  sense on $X$.
 \end{defi} 

 
 \begin{proposition} \label{prop:Existence} Assume that the  equation \eqref{eq:CMAeq3}
 admits a subsolution $\underline u  \in {\rm PSH}(X,\theta) \cap L^{\infty}(X) $ and a supersolution $\bar u \in {\rm PSH}(X,\theta) \cap L^{\infty}(X) $  such that $\underline u \leq \bar u$. Then it admits  a solution $u \in {\rm PSH}(X,\theta) \cap L^{\infty}(X) $ which  satisfies  $\underline u \leq u \leq \bar u$ on $X$.
  \end{proposition}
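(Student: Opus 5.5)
We plan to build the solution as a fixed point of a monotone iteration, the classical sub/supersolution scheme made to work with the pluripotential tools above. Fix $A>0$ so that $t\mapsto f(x,t)+At$ is increasing on $[-M,M]$ for all $x$, where $M:=\sup_X|\underline u|+\sup_X|\bar u|$. This is possible because $f$ is smooth and we only need its values on the compact set $X\times[-M,M]$. Set $F(x,t):=f(x,t)+At$. For a bounded $\theta$-psh function $w$ with $\underline u\le w\le \bar u$, let $T(w)$ be the unique bounded $\theta$-psh solution of
\[
(\theta+dd^c v)^n=e^{Av}e^{F(\cdot,w)-Aw}\cdot e^{-A w}\, e^{Aw}g\,\omega_X^n ,
\]
that is, of $(\theta+dd^c v)^n=e^{A(v-w)+f(\cdot,w)}g\,\omega_X^n$. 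The density $e^{-Aw+f(\cdot,w)}g$ lies in $L^p$ with uniformly bounded norm, so existence and a uniform $L^\infty$ bound follow from the Hermitian/big $L^p$ theory (\cite{GL23Crelle}, \cite{KN19}, \cite{Ng16AIM}). Uniqueness comes from the domination principle, Proposition \ref{prop: max, comp, dom principle}(1), applied with exponent $A$ after rescaling.

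The key step is that $T$ is monotone and maps the order interval $[\underline u,\bar u]$ into itself.

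\emph{Monotonicity.} If $w_1\le w_2$, then $F(\cdot,w_1)\le F(\cdot,w_2)$, so $T(w_1)$ solves an equation with smaller right-hand side coefficient $e^{Av}e^{F(\cdot,w)}$. The comparison principle (Proposition \ref{prop: max, comp, dom principle}(1) with $e^{A(u-v)}$, obtained by applying it to $Au,Av$ or via the standard variant) then gives $T(w_1)\le T(w_2)$.

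\emph{Invariance.} Since $(\theta+dd^c\underline u)^n\ge e^{A(\underline u-\underline u)+f(\cdot,\underline u)}g\omega_X^n$ and $F(\cdot,\underline u)\le F(\cdot,w)$, the function $\underline u$ is a subsolution of the equation defining $T(w)$, so $\underline u\le T(w)$. Symmetrically, $T(w)\le\bar u$.

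Next we iterate. Set $u_0=\underline u$ and $u_{k+1}=T(u_k)$. Then $u_k$ increases, is bounded by $\bar u$, and $u:=(\lim u_k)^*=(\sup_k u_k)^*$ is bounded $\theta$-psh with $\underline u\le u\le\bar u$. By convergence of the Monge-Amp\`ere operator along increasing sequences (Bedford-Taylor in the Hermitian setting), $(\theta+dd^c u_{k+1})^n\to(\theta+dd^c u)^n$ weakly. The right-hand sides $e^{A(u_{k+1}-u_k)+f(\cdot,u_k)}g$ converge almost everywhere, hence in $L^1$ by domination by $Cg$, to $e^{f(\cdot,u)}g$, because $u_k\to u$ off a pluripolar set, which has Lebesgue measure zero. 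Therefore $u$ solves \eqref{eq:CMAeq3}.

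The main obstacle is solving the auxiliary equation with the exponential factor $e^{Av}$ when $\theta$ is only semipositive and big on a Hermitian manifold, together with the corresponding comparison principle for $A\neq 1$. If a direct reference is lacking, we would replace $e^{A(v-w)}$ by $e^{v-w}$ after rescaling $\theta$ by $A$: $v\mapsto Av$ is $A\theta$-psh, and $A\theta$ is still semipositive and big, so the equation becomes of the form $(A\theta+dd^c \tilde v)^n=e^{\tilde v}h\,\omega_X^n$ with $h\in L^p$. This is solvable by \cite{GL23Crelle}, and Proposition \ref{prop: max, comp, dom principle}(1) applies verbatim. Alternatively, one could bypass the convergence argument by using the envelope/minimum principles, Propositions \ref{prop: min principle} and \ref{prop: max principle}, to pass to the limit in the sub- and supersolution inequalities separately.
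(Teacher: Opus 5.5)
Your scheme is the same monotone iteration the paper uses: freeze the nonlinearity, start at $\underline u$, show the iterates increase inside $[\underline u,\bar u]$, and pass to the limit by Bedford--Taylor. However, $A$ is chosen with the wrong sign, and with your choice both key claims (monotonicity of $T$ and invariance of $[\underline u,\bar u]$) fail.

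Write the equation defining $T(w)$ as $(\theta+dd^c v)^n=e^{Av}e^{h(\cdot,w)}g\,\omega_X^n$ with $h(x,t):=f(x,t)-At$. The coefficient is $e^{h(\cdot,w)}$, not $e^{F(\cdot,w)}$. By the domination principle, a \emph{smaller} coefficient produces a \emph{larger} solution, not a smaller one.

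Hence $T$ is order preserving exactly when $t\mapsto h(x,t)$ is non-increasing on $[-M,M]$, i.e.\ $A\geq \sup_{X\times[-M,M]}\partial_t f$. Likewise, $\underline u$ is a subsolution of the $T(w)$-equation for $w\geq \underline u$ only if $f(\cdot,\underline u)-A\underline u\geq f(\cdot,w)-Aw$, which again requires $h$ non-increasing. The inequality $F(\cdot,\underline u)\leq F(\cdot,w)$ you invoke does not give this. Your hypothesis that $F=f+At$ is increasing only says $\partial_t f>-A$.

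Here is a concrete failure. Take $f(x,t)=2t$ and $A=1$, so $F=3t$ is increasing. Let $u$ be a bounded solution of $(\theta+dd^c u)^n=e^{2u}g\,\omega_X^n$; it exists by the same solvability result you use for $T$. Put $\underline u=u-1$ and $\bar u=u+1$, which are an ordered subsolution and supersolution. Then $T(w)$ solves $(\theta+dd^c v)^n=e^{v+w}g\,\omega_X^n$, so by uniqueness $T(u-1)=u+1$ and $T(u+1)=u-1$. Your iterates oscillate between $\underline u$ and $\bar u$ and never converge.

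The repair is to choose $A\geq\sup_{X\times[-M,M]}\partial_t f$, which is exactly what the paper does. Then for $\underline u\leq w_1\leq w_2\leq \bar u$ one has $h(\cdot,w_1)\geq h(\cdot,w_2)$. Hence $(\theta+dd^c T(w_2))^n\leq e^{A(T(w_2)-T(w_1))}(\theta+dd^c T(w_1))^n$, and so $T(w_1)\leq T(w_2)$. Also $h(\cdot,\bar u)\leq h(\cdot,w)\leq h(\cdot,\underline u)$ gives $\underline u\leq T(w)\leq \bar u$.

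Note also that your first displayed product for the right-hand side simplifies to $e^{Av+f(\cdot,w)}$, whose fixed points do not solve \eqref{eq:CMAeq3}. Only the form $e^{A(v-w)+f(\cdot,w)}$ is correct.

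With these corrections, the rest of your argument is sound and coincides with the paper's proof. That includes solvability of the frozen equations, reducing the exponent $A$ to $1$ by passing to $A\theta$, and taking the limit via Bedford--Taylor plus dominated convergence of the right-hand sides. The paper phrases the induction as ``$u_k$ is a subsolution of the next frozen equation'' rather than through an order-preserving operator.
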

   \begin{proof} 
  We use the iterative method as in \cite{BZ23}. Let $C>0$ be a constant such that $-C\leq \underline{u}\leq \overline{u}\leq C$. Since $f$ is smooth, we can fix a constant $A>0$ such that $\partial_t f \leq A$ in $X\times [-C,C]$. It follows that the function $h(x,t):= f(x,t)-At$ is non-increasing in $t$.  
  
   We set $u_0=\underline{u}$ and define by induction  a sequence $(u_k)$ of bounded $\theta$-plurisubharmonic functions such that for each $k \in \N$,
  \[
  (\theta+dd^c u_k)^n = e^{Au_k +h(\cdot,u_{k-1})} g\omega_X^n.
  \]
   This equation admits a unique solution, as follows from \cite[Theorem B]{GL23Crelle} (the case when $\theta>0$ was proved in \cite{Ng16AIM}). Observe that  
   \[
   (\theta+dd^c u_0)^n \geq e^{Au_0+h(\cdot,u_0)}g\omega_X^n,
   \]
   and, using the fact that $h$ is non-increasing in $t\in [-C,C]$ and $u_0\leq \overline{u}$, 
   \[
   (\theta+dd^c \overline{u})^n \leq e^{A\overline{u}+ h(\cdot,u_0)}g\omega_X^n.
   \]
   The comparison principle (Proposition \ref{prop: max, comp, dom principle}) thus ensures that $u_1\geq u_0$. Since $h$ is non-increasing in $t$, we infer that 
   \[
   (\theta+dd^c u_1)^n \geq e^{Au_1+h(\cdot,u_1)} g \, \omega_X^n,
   \]
    We can thus repeat the above argument to see that $u_2\geq u_1$, and so on.  Therefore, $\underline{u}\leq u_k \leq u_{k +1} \leq \bar u$ for every $k\in \N$. Hence the sequence $(u_k)$ is non-decreasing in $k$ and it converges almost everywhere to a function  $u_{\infty} \in PSH(X,\theta) \cap L^{\infty}(X)$. By Bedford-Taylor's convergence theorem $u_{\infty}$ is  a solution of the equation \eqref{eq:CMAeq3}. 
 \end{proof}
 
 \subsection{The non-decreasing case} 
We next consider a particular case when $f$ is non-decreasing in $t$. For each $t$ fixed, as shown in \cite[Theorem B]{GL23Crelle}, there exists a unique $c(t) \in \mathbb R$, and $\varphi_t\in {\rm PSH}(X,\theta)$ such that 
\begin{equation}
	\label{eq: sol increasing}
	(\theta+dd^c \varphi_t)^n = e^{c(t) + f(\cdot,t)} g \, \omega_X^n. 
\end{equation}
The goal of this subsection is to give a necessary and sufficient condition for the existence of a solution to the equation \eqref{eq:CMAeq3}
in terms of the existence of a special solution to the equation \eqref{eq: sol increasing}.

\begin{theorem}\label{thm: bdd sol f increasing}
	If $f$ is non-decreasing in $t$ then the following conditions are equivalent: 
	\begin{enumerate}
		\item there exists $t_0\in \mathbb R$ such that $c(t_0)=0$; 
		\item there exists a bounded subsolution and a bounded supersolution of \eqref{eq:CMAeq3};
		\item there exists a bounded solution of \eqref{eq:CMAeq3}.
	\end{enumerate}
\end{theorem}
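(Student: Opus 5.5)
We prove the cycle of implications $(3)\Rightarrow(2)\Rightarrow(1)\Rightarrow(3)$.

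\emph{$(3)\Rightarrow(2)$.} This is immediate, since a bounded solution is both a subsolution and a supersolution.

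\emph{$(1)\Rightarrow(3)$.} If $c(t_0)=0$, then the bounded function $\varphi_{t_0}$ from \eqref{eq: sol increasing} satisfies
\[
(\theta+dd^c\varphi_{t_0})^n = e^{f(\cdot,t_0)}g\,\omega_X^n ,
\]
and $\varphi_{t_0}$ is defined only up to an additive constant. Normalize it so that $\sup_X\varphi_{t_0}=t_0$. Then $\varphi_{t_0}\le t_0$, and since $f$ is non-decreasing, $e^{f(\cdot,\varphi_{t_0})}\le e^{f(\cdot,t_0)}$. Hence $\varphi_{t_0}$ is a bounded subsolution. Next set $\bar u:=\varphi_{t_0}+t_0-\inf_X\varphi_{t_0}\ge t_0$. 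It has the same Monge--Amp\`ere measure, and $e^{f(\cdot,\bar u)}\ge e^{f(\cdot,t_0)}$, so $\bar u$ is a supersolution. Since $\bar u-\varphi_{t_0}=t_0-\inf_X\varphi_{t_0}\ge 0$, we have $\underline u:=\varphi_{t_0}\le\bar u$. Proposition \ref{prop:Existence} now gives a bounded solution.

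\emph{$(2)\Rightarrow(1)$.} This is the substantive direction. Assume $\underline u$ is a bounded subsolution and $\bar u$ a bounded supersolution, with no ordering between them. Pick $a\le\inf_X\underline u$ and $b\ge\sup_X\bar u$.

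We first show $c(b)\le 0$. Since $\underline u\le b$ and $f$ is non-decreasing,
\[
(\theta+dd^c\underline u)^n\ge e^{f(\cdot,\underline u)}g\,\omega_X^n\ \text{ and }\ e^{f(\cdot,\underline u)}\ge e^{-M}e^{f(\cdot,b)}\cdot e^{f(\cdot,\underline u)-f(\cdot,b)+M}.
\]
This is not quite what we need, so we compare directly instead. We have $(\theta+dd^c\underline u)^n\ge e^{f(\cdot,a)}g\,\omega_X^n$ and $(\theta+dd^c\varphi_a)^n=e^{c(a)}e^{f(\cdot,a)}g\,\omega_X^n$, so
\[
(\theta+dd^c\varphi_a)^n\le e^{c(a)}(\theta+dd^c\underline u)^n .
\]
By Proposition \ref{prop: max, comp, dom principle}(2) (domination principle), this forces $e^{c(a)}\ge 1$, i.e.\ $c(a)\ge 0$. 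Symmetrically, $(\theta+dd^c\bar u)^n\le e^{f(\cdot,b)}g\,\omega_X^n=e^{-c(b)}(\theta+dd^c\varphi_b)^n$, and the same principle gives $e^{-c(b)}\ge 1$, i.e.\ $c(b)\le 0$.

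So $c(a)\ge 0\ge c(b)$. To conclude it suffices to show that $t\mapsto c(t)$ is continuous; the intermediate value theorem then produces $t_0\in[a,b]$ with $c(t_0)=0$.

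\emph{Continuity of $c$, the main obstacle.} We exploit the uniqueness of $c(t)$ and the domination principle once more. For $s,t$ in a compact interval, let $\delta:=\sup_{X}|f(\cdot,s)-f(\cdot,t)|$, which tends to $0$ as $s\to t$ because $f$ is smooth. Then
\[
(\theta+dd^c\varphi_s)^n = e^{c(s)+f(\cdot,s)}g\,\omega_X^n\le e^{c(s)-c(t)+\delta}(\theta+dd^c\varphi_t)^n .
\]
The domination principle gives $c(s)-c(t)+\delta\ge 0$. Exchanging the roles of $s$ and $t$ gives $c(t)-c(s)+\delta\ge 0$. Hence $|c(s)-c(t)|\le\delta$, so $c$ is continuous. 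The same comparison also shows that $c$ is non-increasing when $f$ is non-decreasing, although monotonicity is not needed here.

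This completes the cycle and proves the theorem.
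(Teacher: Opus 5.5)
Your proof is correct. The implications $(3)\Rightarrow(2)$, $(2)\Rightarrow(1)$ and $(1)\Rightarrow(3)$ follow the paper's argument essentially verbatim:
\begin{itemize}
\item for $(2)\Rightarrow(1)$, you apply the domination principle against $\varphi_a$ and $\varphi_b$, with $a\le\inf_X\underline u$ and $b\ge\sup_X\bar u$;
\item for $(1)\Rightarrow(3)$, you shift $\varphi_{t_0}$ to obtain an ordered subsolution/supersolution pair and then invoke Proposition \ref{prop:Existence}.
\end{itemize}
Your supersolution $\varphi_{t_0}+t_0-\inf_X\varphi_{t_0}$ has the correct sign; the paper's displayed formula $u+\inf_X u+t_0$ is a slip.

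The one genuine difference is the continuity of $t\mapsto c(t)$.
\begin{itemize}
\item The paper quotes \cite[Corollary 1.2]{LPT21}, a general stability statement for the normalizing constant with respect to the density.
\item You derive it in a few lines from the domination principle. This gives the quantitative bound $|c(s)-c(t)|\le\sup_X|f(\cdot,s)-f(\cdot,t)|$, so $c$ is Lipschitz on any compact interval $I$ with constant $\sup_{X\times I}|\partial_t f|$.
\end{itemize}
Your argument works because the two densities $e^{f(\cdot,s)}g$ and $e^{f(\cdot,t)}g$ differ by a multiplicative factor uniformly close to $1$. No stability theory for the Monge--Amp\`ere equation is then needed. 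It is the same mechanism the paper uses in the subsequent remark to show that $c$ is non-increasing. Your route makes the theorem self-contained; the citation in the paper is more general than the situation requires.

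Only cosmetic cleanup remains:
\begin{itemize}
\item Delete the abandoned display involving the undefined constant $M$; it is a tautology.
\item Fix the sentence announcing that you first show $c(b)\le 0$, since you actually begin with $c(a)\ge 0$.
\item Either choose $a\le b$ explicitly, or apply the intermediate value theorem on the interval with endpoints $a$ and $b$.
\end{itemize}
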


Note that in the condition (2) above we do not assume any ordering relation between the subsolution and the supersolution, but it holds automatically after adding constants.

We now give a simple example where the condition (1) above is satisfied. Assume $f$ satisfies 
\[
\lim_{t\to \pm\infty}f(x,t)=\pm \infty,
\]
uniformly in $x$ (one can take e.g. $f(.,t)=e^t$). Let $u$ be a bounded $\theta$-psh function and $c(g)$ be a constant such that 
\[
(\theta+dd^c u)^n = e^{c(g)} gdV_X.
\] 
Then, for $t_1$ near $-\infty$, we have $f(.,t_1) \leq c(g)$ and for $t_2$ near $+\infty$, we have $f(.,t_2) \geq c(g)$. It then follows from the domination principle that $c(t_1) \geq 0$ and $c(t_2) \leq 0$. The continuity of $t\mapsto c(t)$ (see \cite[Corollary 1.2]{LPT21}) then ensures the existence of $t_0$ with $c(t_0)=0$.

 In \cite{Lu13}, the K\"ahler case (when $\theta=\omega_X$ is K\"ahler) was studied and the condition (1) can be reformulated as $\int_X e^{f(\cdot,t_0)}\omega_X^n=\int_X \theta^n$. Our result above generalizes the one in \cite{Lu13} to the general Hermitian context, where the argument using computation of Monge-Amp\`ere mass fails. Our proof  is quite different, relying on the comparison principle and the resolution of the equation given sub/super solutions in Theorem \ref{thm:SubSolThm}.
\begin{proof}
The implication $(3)\Longrightarrow (2)$ is obvious.  

We prove $(2)\Longrightarrow (1)$. Assume there are a bounded subsolution $\underline{u}$ and a bounded supersolution $\bar u$, then for $t_1=\inf_X \underline{u}$, $t_2=\sup_X \bar u$, the domination principle (see Proposition \ref{prop: max, comp, dom principle}) yields $c(t_1)\geq 0$ and $c(t_2)\leq 0$. Indeed we have
\begin{eqnarray*}
(\theta + dd^c \varphi_{t_1})^n &= & e^{c(t_1) + f(\cdot,t_1)} g \omega_X^n \\
&\leq & e^{c(t_1) + f(\cdot,\underline{u})} g \omega_X^n \\
&\leq & e^{c(t_1)} (\theta + dd^c \underline{u})^n.
\end{eqnarray*}
The domination principle, Proposition \ref{prop: max, comp, dom principle}, then yields $c(t_1) \geq 0$.  
By the same reasoning we see that $c(t_2) \leq 0$.

 The continuity of $t\mapsto c(t)$ (see \cite[Corollary 1.2]{LPT21}) then ensures the existence of $t_0$ such that $c(t_0)=0$.

We finally prove the implication $(1)\Longrightarrow (3)$. Let $u := \varphi_{t_0} \in PSH(X,\theta)\cap L^{\infty}$ be a solution to \eqref{eq: sol increasing} with $t=t_0$. Then $\underline{u}:=u-\sup_X u +t_0$ is a bounded subsolution of \eqref{eq:CMAeq3}, while $\overline{u}:=u+\inf_X u +t_0$ is a bounded supersolution. Since $\underline{u}\leq \overline{u}$, Proposition \ref{prop:Existence} implies the existence of a bounded solution of \eqref{eq:CMAeq3} lying between $\underline{u}$ and $\overline{u}$.
\smallskip
\end{proof}

%

\begin{remark}
	 However, the existence of a subsolution alone does not ensure the existence of a solution. One can take $f$ strictly increasing but $f\leq -1$, $\psi^{\pm}=0$, and $\theta=\omega_X$. Indeed, if $u$ is a bounded solution then $(\omega_X+dd^c u)^n\leq e^{-1}\omega_X$. The domination principle Proposition \ref{prop: max, comp, dom principle} then ensures that $u\geq C$ for all $C>0$, which is impossible. 
\end{remark}

\begin{remark}
Since $f$ is non-decreasing in $t$, the domination principle Proposition \ref{prop: max, comp, dom principle} reveals that $c(t)$ is non-increasing in $t$.  Indeed if $t_1<t_2$ then
$$(\theta+dd^c \varphi_{t_1})^n \leq e^{c(t_1) - c(t_2)} (\theta+dd^c \varphi_{t_2})^n.$$
The domination principle, Proposition \ref{prop: max, comp, dom principle} then yields $c(t_1)\geq c(t_2)$.

We do not assume here that $f$ is increasing in $t$, therefore, $c(t)$ may be constant in $t$. 

\end{remark}

 \subsection{From weak to smooth solution}
We now prove Theorem \ref{thm:ExistenceBoundedSol}
 \begin{proof} The existence of a solution $u \in PSH(X,\theta) \cap L^{\infty} (X)$ of the equation \eqref{eq:CMAeq2} such that $\underline u \leq u \leq \bar u$ follows from Proposition \ref{prop:Existence} since $g := e^{\psi^+ - \psi^-} \in L^p(X)$ with $p > 1$.
 
 By the maximum principle $V \in PSH(X,\theta) \cap L^{\infty} (X)$ is a subsolution of the equation \eqref{eq:CMAeq2} such that $\underline u \leq V \leq \bar u$.
 By Proposition \ref{prop:Existence}, there exists a solution $u \in PSH(X,\theta) \cap L^{\infty} (X)$ of the equation \eqref{eq:CMAeq2} such that $ \underline u \leq V \leq u \leq \bar u$. By definition of $V$ we also have $ u \leq V$. Hence $V =u$ is a solution of the equation \eqref{eq:CMAeq2}.
 
 Since $\underline u \leq W$, it follows that $W \in PSH(X,\theta) \cap L^{\infty} (X) $. By the minimum principle it follows that $W$ is a supersolution of the equation \eqref{eq:CMAeq2} (see \cite{GLZ19}). We again apply Proposition \ref{prop:Existence}  to produce a solution $u \in PSH(X,\theta) \cap L^{\infty} (X)$  of the equation \eqref{eq:CMAeq2} such that $\underline u \leq u \leq W $.
 By definition we have $ W \leq u$, hence  $W = u$ is a solution of the equation \eqref{eq:CMAeq2}.
 
 In each case the smoothness follows from Theorem \ref{thm: bdd implies smooth intro}.
  \end{proof}

\begin{example}\label{example: non-increasing f}
	Consider a smooth function $f : X\times \mathbb R \rightarrow \mathbb R$ such that $\lim_{t\to -\infty} f(x,t) <0$ and $\lim_{t\to +\infty}f(x,t) >0$ uniformly in $x$. For sufficiently large $C>0$, the constant functions $-C$, and $C$ are sub/super solutions of the equation 
	\[
	(\omega_X+dd^c u)^n =e^{f(\cdot,u)} \omega_X^n. 
	\]
	The above result yields a bounded solution lying between $-C$ and $C$. By our regularity result, this solution is smooth on $X$.  
\end{example}  

\begin{example} \label{example:general f}
	Take $f(x,t)=-t+h(x)$, with $f$ smooth and assume that the equation 
	\[
	(\omega+dd^c u)^n =e^{f(\cdot,u)} \omega^n
	\]
	does not have any bounded solution. One can construct such an example on a Fano manifold without K\"ahler-Einstein metrics. Then, the constant function $\underline{u}=C$ is a subsolution when $C$ is large enough, while $-C$ will be a supersolution. Thus, the existence of sub/super solutions (but not with the right order) does not ensure the existence of a solution. 
\end{example}

\end{document}